\newcommand{\R}{\mathbb{R}}
\newcommand{\Z}{\mathbb{Z}}
\newcommand{\C}{\mathbb{C}}
\newtheorem*{discussion}{Discussion}
\begin{document}

\titlepg{Bases for representation rings of Lie groups and their maximal tori}{Changwei Zhou}
    {May}{2012}


\abstr

\indent A {\bf{Lie group}} is a group that is also a differentiable manifold, such that the group operation is continuous respect to the topological structure. To every Lie group we can associate its tangent space in the identity point as a vector space, which is its Lie algebra. Killing and Cartan completely classified simple Lie groups into seven types.\\

Representation of a Lie group is a homomorphism from the Lie group to the automorphism group of a vector space. In general represenations of Lie group are determined by its Lie algebra and its the connected components. We may consider operations like direct sum and tensor product with respect to which the representations of $G$ form a ring $R(G)$. \\

Assume the group is compact. For every Lie group we may find a maximal torus inside of it. By projecting the representation ring of the Lie group to the representation ring of its maximal torus, we may consider to express $R(T)$ elements as sums of products of base elements with $R(G)$ elements. In 1970s Pittie and Steinberg proved $R(T)$ is a free module over $R(G)$ using homological algebra methods without specifying the bases in each seven types. \\

In this senior project I found an explicit basis for the representation ring of the maximal torus of Lie group $SU_{n}$ in terms of the represenation ring of the Lie group itself.  I also did some computation with $SO_{2n}$.\\

\tableofcontents

\dedic

To {\bf{Marina Day}}. I know you will be better. And Steven Wu. 

\acknowl

\indent Without my advisor Professor {\bf{Gregory Landweber}}'s guidance, this senior project would never have started. \\\indent An important hint was given by Professor {\bf{David Speyer}} in Mathoverflow. \\\indent Professor {\bf{James Belk}} gave valuable suggestions that helped me to complete this project when I was stuck. 

I am grateful to Professor {\bf{Sternberg}}'s lecture notes on Lie algebra which expanded my understanding on this subject. 

\begin{center}
With gratitude to the following friends:\\

\noindent Daniela Anderson\\
Jin Zhibin\\
Leah Gastler\\
Mary Ann Jackson\\
Nathan Savir\\
Oleg Lazarev\\
Simon Rubinstein-Salzedo\\
Stanley Moore\\
Steven Karp\\
Steven Wu\\
Valance Wang\\
Ying Xiaozheng\\
Zana Chan(Giang Tran:{\bf{my great, great roommate and suitemate}})\\
\end{center}

\chapter{Background and general introduction}
\label{chap }

\doublespace
\section{Brief summary}
It is well known that for a compact Lie group its representations are completely reducible \cite{Bott2}. Thus we may consider the representation rings $R(G)$ and $R(T)$ formed under formal inverse: $$E-F=G-H\leftrightarrow E\oplus H\cong G\oplus F$$ It is also well-known that $R(T)\equiv \mathbb{Z}[\Lambda]$, $R(G)\equiv \mathbb{Z}[\Lambda]^{W}$, where $\Lambda$ is the character lattice of $T$ and $W$ is the Weyl group. In other words the representation ring of the Lie group is isomorphic to the invariant subring under the action of the Weyl group in $\Z[\Lambda]$. 

We proved in this paper that $\prod x_{i}^{a_{i}},0\le a_{i}\le n-i$ is a basis for $R(T(SU_{n}))$ over $R(SU_{n})$ and discussed $n=1,2,3$ cases in $SO_{2n}$. In here $x_{i}$ are weights in $H^{*}$ such that $x_{i}(h_{j})=\delta_{ij}$, $h_{j}\in H$. Our main method is an induction proof based on the minimal polynomial satisfied by elements in $R(G)$. We also come up with different proofs for the $SU_{3}$ case and employed index theory (to be discuss below) to verify the bases we found are correct. 

\section{Historical background}
\label{section }
\indent In 1963, Atiyah and Singer published their landmark paper on the index theorem \cite{Atiyah}, which states the global analytical behavior of a differential operator can be measured by its topological invariants on the manifold. Its basic form is $$\text{dim Ker}D-\text{dim coKer}D=ch[D][td(D)](X)$$ We are going to briefly explain the meaning of the formula in Chapter 2. 

\indent Atiyah and Singer initally proved their theorem based on Gronthedieck's proof of his Grothendieck-Riemann-Roch Theorem. But their second (and later) modified proofs are remarkably different as they used K-theoretic tools \cite{Atiyah2}. This work suggested that $K$-theory would play a greater role in future development of index theory. Subsequently topological K-theory has become indispensable in advanced level analysis. On the other hand it should be possible to work on the differential operator side and reach similar generalizations via analytic methods. 

Replacing an arbitrary differentiable manifold with one built from groups, letting us use abstract algebra in the place of analysis, Raoul Bott \cite{Bott1} considered the case  with the additional structures as follows:

\indent {\bf  1}: The base manifold, $X$, is the coset space $G/H$ as a compact connected Lie group G by a closed connected subgroup $H$. \\
\indent {\bf  2}: Both $E$ and $F$ are induced by representations of $H$. \\
\indent {\bf  3}: The differential operator $D:\Gamma E\rightarrow \Gamma F$ commutes with the induced action of $G$ on $\Gamma E$ and $\Gamma F$. 

With these conditions assumed Bott considered an analog of the Atiyah-Singer index theorem in terms of representation rings using tools like the Peter-Weyl Theorem, he showed that:

\indent {\bf Theorem III}: Let the rank of $H$ equal the rank of $G$, and assume that $\pi_{1}(G)$ has no 2-torsion. Then for every $G$-module $W$, there exists a homogeneous elliptic operator $D_{W}: \Gamma E\rightarrow \Gamma F$, with $\chi(D)=[W]$. \\
\indent In here $\chi$ denotes the symbol, which is the difference between kernel and cokerl of $D$ in the representation ring. 

\indent {\bf Theorem IV}: Let rank $H=$ rank $G$, and let $D:\Gamma E\rightarrow \Gamma F$ be an arbitrary elliptic operator over $G/H$. Then the formula of Atiyah-Singer for the index of $D$ is valid. 

Theorem III effectively linked the elements in the representation ring with elliptic differential operators, and Theorem IV gives a conclusive answer to the limited version of the index problem. But Bott's paper actually proved something more in Theorem IV: the index of the differential operator is completely determined by only the topological structure of the manifold and not the choice of operator itself. This is remarkable because it implies the local analytical behavior of the differential operator cannot change its global analytical behavior limited by homogeneous space structure. 

Based on his paper and the index theorem, one may wonder from the reversed direction if the differential operator helps to tell something about the algebraic structure of the Lie group and its maximal rank subgroup. To be more specific, assume $H$ is a maximal rank subgroup in $G$ and $R(H)$, $R(G)$ be the representation ring of $H$ and $G$ respectively, It is desirable to know the relationship between $R(H)$ and $R(G)$. Without using differential operators, Harsh Pittie proved the following result in 1972\cite{Pittie}:

\indent {\bf Theorem 1.1} Let $G$ be a connected compact Lie group with $\pi_{1}G$ free and $G'$ a closed connected subgroup of maximal rank, then $R(G')$ is free (as a module) over $R(G)$ (by restriction). 

The technical restriction of $\pi_{1}G$ be free was considerably weakened by Robert Steinberg in his later paper in 1974 \cite{Steinberg}, where he presented as: 

\indent {\bf Theorem 1.2} Let $G$ be a connected Lie group and $S$ its semisimple component. Then the following conditions are equivalent:

\noindent {\bf (a)} $R(G')$ is free over $R(G)$ for every connected subgroup $G'$ of maximal rank. \\
{\bf (b)} $R(T)$ is free over $R(G)$ for some maximal torus $T$.\\
{\bf (c)} $R(G)$ is the tensor product of a polynomial algebra and a Laurent algebra. \\
{\bf (d)} $R(S)$ is a polynomial algebra. \\
{\bf (e)} $S$ is a direct product of simple groups, each simply connected of type $SO_{2r+1}$. 
\remark Since $\pi_{1}G$ is free if and only if $S$ is simply connected, because $G$ is the product of $S$ and a central torus, the equivalence of $(a)$ and $(e)$ provides the just-mentioned extension and converse of $\text{Theorem 1.1}$. 

Steinberg's proof, however, utilized homoglogical algebra and module theory that proved the statement in great generality. It is not clear from his work what the basis elements exactly are. Hence it is desirable to link Bott's work with this theorem to work backwards, namely for an element in the representation ring of $T$, construct a differential operator to realize it as sums of basis elements in terms of the representation ring of $G$. This is the starting point of my paper. 

We found it is possible to construct an inner product that uses the index on $G/H$ from equivariant K-theory. It accepts inputs from $R(T)$ and have outputs in $R(G)$. Further under this inner product the basis Steinberg found (we shall abbreviate as 'Steinberg's basis' ) is orthonormal in $SU_{3}$ case. However, the author could not verify if Steinberg's basis is orthonormal in general. What I can prove is the basis I found in $SU_{3}$ case is unimodular, and it is uncertain to me how to prove this in general for $SU_{n}$. The main proof that $R(T)$ is a free module over $R(SU_{n})$ with basis $\prod x_{i}^{a_{i}},0\le a_{i}\le n-i$ is done by purely algebraic methods. The interested reader of the general proof may jump to section 4.2.3 to read the proof directly. 

The next section will review the necessary technical background of above theorems, give appropriate definitions and proofs of background material. Then I will proceed to my research addressing this problem. 

\chapter{Index theory technical background}

\section{introduction}
\label{section }
The material in this section is fairly standard. For an introduction of general topology the reader may refer to Kelley's treatise \cite{Kelley}.
\section{General Topology}
\definition Let $\mathcal{X}$ be a topological space, we say $\mathcal{X}$ is a manifold if for any open set $\mathcal{X}_{\alpha}$ in some cover $\mathcal{A'}$, there is a homeomorphism $f_{\alpha}$ from $\mathcal{X}_{\alpha}$ to $\mathbb{R}^{n}$ for some $n$ such that $f_{\alpha}$ satisfies the following condition:
\item For $\alpha\not = \beta$, there exists transition function $\phi_{\alpha\beta}: \mathbb{R}^{n}\rightarrow \mathbb{R}^{n}$ such that $\phi_{\alpha\beta}f_{\alpha}=f_{\beta}$. 
\remark It is easy to notice that $\phi_{\beta\alpha}=\phi_{\alpha\beta}^{-1}$; $\phi_{\alpha\beta}=f_{\beta}f_{\alpha}^{-1}$. So it make sense to speak of the analytical behavior of $\phi_{\alpha\beta}$. 
\remark The manifold we defined above is called a {\bf{topological manifold}}. 
\definition Let $\mathcal{X}$ be a topological space, an {\bf{embedding}} from $\mathcal{X}$ to $\mathbb{R}^{n}$ is a diffeomorphism from $\mathcal{X}$ to $\mathbb{R}^{n}$. 
\remark Whitney proved that every smooth manifold of dimension $n$ admits an embedding in $\mathbb{R}^{2n}$. 
\\

\section{vector bundles and $K(\mathcal{X})$}
This section follows from \cite{Atiyah2} and \cite{Hatcher}
\begin{definition}
Let $\mathcal{X}$ be a smooth manifold. A {\bf{vector bundle}} on $\mathcal{X}$ is defined to be a map $\pi$ between a topological space $\mathcal{E}$ and $\mathcal{X}$ such that $\pi^{-1}(x)$ has a real vector space structure. The following triviality condition is required: There is a covering of $\mathcal{X}$ by open sets $U_{\alpha}$ such that $\pi^{-1}(U_{\alpha})$ is homeomorphic to $U_{\alpha}\times \R^{n}$ by some $h_{\alpha}$, and $\pi^{-1}(x)$ is homeomorphic to $x\times R^{n},\forall x\in U_{\alpha}$ by $h_{\alpha}$. \\
\indent $h_{\alpha}$ is called a {\bf{local trivialization}}. $\pi$ is called the {\bf{projection}}. The space $\mathcal{E}$ is called the {\bf{total space}}, $\mathcal{B}$ the {\bf{base space}}, and $\pi^{-1}(x)$ is called the {\bf{fiber}} we denote by $\mathcal{E}_{x}$ or $F$. If we replace $\R$ by $\C$ we yield a {\bf{complex bundle}}.
\end{definition}
\remark In this project we are working with complex bundles. 
\remark The {\bf{trivial}} vector bundle is $\mathcal{X}\times \mathbb{C}^{n}$. 
\begin{definition}
For $U_{\mathcal{X}}$ in $\mathcal{X}$, a {\bf{section}} is a map $g: \mathcal{X}\rightarrow \mathcal{E}$ that satisfies $\pi(g(U_{\mathcal{X}}))=U_{\mathcal{X}}$. We denote the set of such maps as $\Gamma E$. 
\end{definition}
\remark Since locally $U_{\mathcal{X}}$ is identified with $U_{\mathcal{E}}\cong U_{\mathcal{X}}\times \mathbb{C}^{n}$, a section on $U_{\mathcal{X}}$ is essentially a map $U_{\mathcal{X}}\rightarrow \mathbb{C}^{n}$. 
\definition Let $\mathcal{X}$ be a smooth manifold. At every point $x\in \mathcal{X}$, we may define the {\bf{tangent space}} at $x$ to be the set of tangent vectors of a curve passing through $x$.  We denote it by $T\mathcal{X}_{x}$. 
\remark Because the vectors has $n$-dimensional freedom when it is tangent to a surface of $n$ dimensions, in general (for smooth manifolds) the tangent space of a point is of the same dimension as the manifold. 
\definition Consider the topological space $\mathcal{E}=\{T\mathcal{X}_{x},x\},x\in \mathcal{X}$. We denote it by {\bf{TX}}. The projection map is provided by $(a,b)\rightarrow b$. The local trivilization associates the inverse of an open subset in $\mathcal{X}$ to $\R^{n}\times \R^{n}$, where the first $n$ is the dimension of $\mathcal{X}$ and second $n$ is the dimension of the tangent space of $\mathcal{X}$ at $x\in \mathcal{X}$. We denote this as the {\bf{tangent bundle}} of $\mathcal{X}$. 
\definition For a vector space $V$, we define its {\bf{dual space}} as the vector space generated by the linear maps from $V$ to $k$. 
\remark For a general $n$-dimensional vector space $V$ with basis $\{e_{i}\}$, its dual space is $n$-dimension as well, whose basis are $\{e_{i}^{*}\}$ as linear map assigning value $\delta_{ij}$. 
\definition The dual space for the tangent space at $x$ is called the {\bf{cotangent space}}. 
\remark We define the union of cotangent spaces using contruction we did above as the {\bf{cotangent bundle}}. We denote it by $T^{*}(\mathcal{X})$. 
\definition We define the {\bf{direct sum}} of two vector bundles over the same manifold $\mathcal{X}$ fibrewise (that is, the $V$ corresponding to $x$ in $E$). We define $[V\oplus W]: (V+W)_{x}=V_{x}\oplus W_{x}$. It is easy to verify the transition maps work properly. We also define the {\bf{tensor product}} of two vector bundles $V\otimes W$ be a new bundle: $[V\otimes W]_{x}=V_{x}\otimes W_{x}$. 
\definition A {\bf{isomorphism}} between $\mathcal{E}$ and $\mathcal{F}$ over the same base space $\mathcal{X}$ is a homeomorphism between $E$ and $F$ that commute with projections and is fibrewise linear. 
\remark This is the special case of a general morphism between vector bundles, which we will define in the `Index theorem' section. In general if a vector bundle is isomorphic to the trivial bundle, then we say it is {\bf{trivializable}}. 
\definition Consider $\oplus$ as the group operation with formal inverse $-$ such that $E-F=G-H\Leftrightarrow E\oplus H\cong G\oplus F$. The abelian group generated by the formal difference elements is called ${\bf{K(\mathcal{X})}}$. 
\example
As an example, the {\bf{differential forms}} on $\mathcal{X}$ is a smooth section from $\mathcal{X}$ to the exterior tensor product of its cotangent bundle. In other words a $n$ dimensional differential form is a anti-symmetric map $$\beta_{x}:  \prod^{n}_{1}T_{x}\rightarrow k$$ The space of degree $n$ differential forms is often denoted as $\Omega^{n}(M)$. 
\definition Consider two topological spaces $Y$ and $\mathcal{X}$ such that there is a surjective map $f: Y\rightarrow \mathcal{X}$. Assume $\mathcal{E}$ is a vector bundle over $\mathcal{X}$, we define the {\bf{pull back}} of $\mathcal{E}$ be $f^{*}\mathcal{E}$: $$f^{*}\mathcal{E}: (e,y), e\in \mathcal{E}, y\in Y, f(y)=\pi(e)$$


\section{The Analytical Index}
This section follows from \cite{E&M}
\begin{definition}
\indent Let $C^{\infty}$ denote the class of smooth functions defined on $\mathbb{R}^{n}$.  The ring of $C^{\infty}$ differentiable functions can be mapped to itself via standard differentiation:
$$\frac{\partial}{\partial x_{i}}: f\rightarrow \frac{\partial}{\partial x_{i}}(f)$$
Consider the algebra generated by $\frac{\partial}{\partial x_{i}}$ over the base field $\mathbb{R}$ or $\mathbb{C}$. We define this algebra as linear differential operators. The linear space of differential operators on $U$ of order at most $k$ is denoted by $D_{k}(U)$. 
\end{definition}
\definition
For $f\in D_{k}(U)$, its $\bf{symbol}$ is defined as a polynomial substituting the $\frac{\partial}{\partial x_{i}}$ top degree form in the linear differential operator with $i\epsilon_{i}$. The result is a homogeneous polynomial in $n$ variables of the following form:

$$\sigma(D)(x,\epsilon)=\sum_{\sum a_{i}\le k} f_{a_{0}a_{1}..a_{n}}(x)(i\epsilon_{0})^{a_{0}}\times...\times (i\epsilon_{n})^{a_{n}}$$

The {\bf{principal symbol}} of order $k$ of $D$ is the polynomial given by $$\sigma(D)_{k}(x,\epsilon)=\sum_{\sum a_{i}=k}f_{a_{0}a_{1}..a_{n}}(x)(i\epsilon_{0})^{a_{0}}\times....\times (i\epsilon_{n})^{a_{n}}$$
\remark To define the analytical index we need to consider the more general case of differential operators between sections of vector bundles. For $E$ and $F$ both defined on the base space $\mathcal{X}$, the {\bf{symbol}} of a differential operator $D$ that maps $f: C^{\infty}\in E_{x}$ to $Df\in C^{\infty}F_{x}$ is a map in the cotangent bundle of $\mathcal{X}$. We shall consider the following case where $E$ and $F$ are both trivial bundles isomorphic to $\mathcal{X}\times \mathbb{C}^{n}$. 
\remark Following above, the situation that $D$ is from $C^{\infty}[\mathbb{C}^{n}]$ to $C^{\infty}[\mathbb{C}^{n}]$ can be generalized to $\sigma(D):T^{*}(\mathcal{X})\rightarrow \mathbb{C}$ maps $\sum \epsilon_{i}dx^{i}$ to $\sigma(D)(x,\epsilon)\in \mathbb{C}$. 
\definition
A linear operator of order at most $k$ from $E$ to $F$ on $U$ is a linear map $P$: $\Gamma(U,E)\rightarrow \Gamma(U,F)$ of the form $$D=\sum_{\sum a_{i}\le k} C_{a_{0}..a_{n}} \partial_{a_{0}}\partial_{a_{1}}...\partial_{a_{n}}$$ 
We introduce the {\bf{multi-symbol}} $\alpha=(a_{0}....a_{n})$ to simplify our notation. In the above definition we have $C_{\alpha}\in \Gamma(U, Hom(E,F))$. The space of such differential operators is denoted by $D_{k}(U,E,F)$.  
\remark If for any coordinate chart $U_{k}$ on $\mathcal{X}$ with the property that $E_{U_{k}}$ is trivializable, $D$ limited at $U_{k}$ is in $D_{k}(U_{k},E,F)$, then we denote such class of differential operator by $D_{k}(E,F)$. 
\remark We denote by $\pi: T^{*}\mathcal{X}\rightarrow \mathcal{X}$ the projection map from the cotangent bundle to the manifold. Hence we may form the pull back $\pi^{*}(E)$ and $\pi^{*}(F)$ whose fibre above $\epsilon_{x}\in T^{*}(\mathcal{X})$ is $E_{x}$ and $F_{x}$ respectively. We now consider the vector bundle $Hom(E,F)$ over $\mathcal{X}$, whose fibre above $x\in \mathcal{X}$ is $Hom(E_{x},F_{x})$. Its pull back $\pi^{*}Hom(E,F)\cong Hom(\pi^{*}(E),\pi^{*}(F))$, whose fiber above $\epsilon_{x}\in T_{x}^{*}\mathcal{X}$ is $Hom(E_{x},F_{x})$. 
\definition We now redefine the principle symbol in the vector bundle case. Let $E,F$ by smooth vector bundles on $\mathcal{X}$ and let $D\in D_{k}(E,F)$. The {\bf{principal symbol}} of $D$ is a smooth function $\epsilon_{x}\in T^{*}_{x}\rightarrow \sigma_{k}(D)(\epsilon_{x})\in Hom(E_{x},F_{x})$. For each $x\in \mathcal{X}$, the function $\epsilon\rightarrow \sigma_{k}(P)(x,\epsilon)$ is a degree $k$ homogenesous polynomial function $T^{*}_{x}\mathcal{X}\rightarrow Hom(E_{x},F_{x})$. 
We omit the lengthy discussion on the analgous case of defining the polynomial explicitly. 
\example Consider the most trivial example, letting the base space as a point $x\in X$ with $E\cong \mathbb{C}^{m}$, $F\cong \mathbb{C}^{n}$. Then the section of $E,F$ is isomorphic to $E,F$. The differential operator $D$ thus degenerate to a linear homomorphism between $E^{*}$ and $F^{*}$. And its symbol is the map from a point to $D$ since $T^{*}(x)\cong x$ is one point. 
\\To proceed to the index theorem we need to define the $\bf {elliptic}$ differential operator. 
\begin{definition}
A linear differential operator $D$ is called $\bf{elliptic}$ of order $k$ if, for any $\epsilon_{x}\in T^{*}_{x}\mathcal{X}$ non zero, $$\sigma_{k}(D)(\epsilon_{x}):E_{x}\rightarrow F_{x}$$ is an isomorphism. 
\end{definition}
\begin{definition}
A bounded operator $D:E\rightarrow F$ is called Fredholm if $Ker(D)$ and $Coker(D)$ are both finite dimensional. We denote by $F(E,F)$ the space of all Fredholm operaters from $E$ to $F$. 
\end{definition}
\begin{definition}
The analytical index of a Fredholm operator $D$  is defined to be dim$(\ker D)$-dim(coker D). 
\end{definition}
\theorem An elliptic operator is Fredholm.
\remark This is a general theorem involve functional analysis, we are not able to present the proof in here. The interest reader is invited to read Lecture 2,3,4 of Erik van den Ban and Marius Crainic's notes. 
\theorem For finite dimensional vector bundles $E$, $F$ over a point, the analytical index $|D|$ is equal to dim $E$-dim $F$. 
\begin{proof}  The fact that $D$ is a vector space homomorphism follows from our discussion in Example 1.2.46. The theorem follows from the well-known rank-null identity in linear algebra by assuming dim $E=m$ and dim $F=n$ (over $\mathbb{C}$):
$$\text{dim} ker(f)+\text{dim}im(f)=\text{dim}\mathbb{C}^{m}=m, 
\text{dim} coker(f)+\text{dim}coim(f)=\text{dim}\mathbb{C}^{n}=n$$
Hence 
$\text{dim} ker(f)-\text{dim}coker(f)=\text{dim}\mathbb{C}^{m}-\text{dim}\mathbb{C}^{n}=m-n. $
\end{proof}
\section{The topological index[unfinished]}
We now define the topological index of an operater. 

\chapter{Lie groups and lie algebras[unfinished]}
\remark Much of the material in here are standard. The author first learnt them from \cite{Hall} and is grateful to the author. 
\definition A {\bf{Lie Group}} is a smooth manifold $G$ as well as a group with group operations defined $G\times G\rightarrow G$ such that the map $G\times G\rightarrow G$ sending $(x,y)$ to $x*y^{-1}$ is smooth. 
\example The general linear group $GL(n,\mathbb{R})$ consists of non-singular matrices in $M_{n}(\mathbb{R})$, and the special linear group $SL(n,\mathbb{R}): M\in M_{n},\det(M)=1$. 
\example The orthgonal group $O_{n}:M\in O_{n}\leftrightarrow MM^{t}=1$; and the special orthgonal group $SO(n): M\in O_{n}, \det(M)=1$. 
\example The unitary group $U_{n}: M\in M_{n}(\mathbb{C}), M^{*}M=1$, and the special unitary group $SU_{n}:M\in SU_{n}(\mathbb{C}), \det(M)=1$. 
\example The symplectic group $Sp(n,\mathbb{R}): M\in M_{n}, M^{*}JM=1$ with $J$ defined to be:
$$\begin{bmatrix}
I & 0  \\[6.5pt]
0&   -I 
\end{bmatrix}$$ where $I$ is a $n\times n$ matrix. With field be $\mathbb{C}$ instead of $\mathbb{R}$, we have the symplectic group $Sp(n,\mathbb{C})$. We define the group $Sp(n)$ to be $Sp(n)=Sp(2n,\rr)\cap O(2n)$.
\definition The above groups are called the {\bf{classical groups}}. We shall abbreviate the complex cases as $GL_{n},SL_{n},SO_{n}, SU_{n}, Sp_{n}$. 
\discussion We now give a partial proof of an important lemma. 
\begin{lemma} The classical groups are all Lie groups. 
\begin{proof}
As an outline we need to show that $GL_{n}\mathbb{R}$ is a Lie group ({\bf{i}}). Then we need to show every closed subgroup of a Lie group is a Lie group ({\bf{ii}}).\\ Since the above $GL_{n},SL_{n},U_{n},Sp(n),Sp(n,\mathbb{R}),Sp(n,\mathbb{C})$ are all closed under limit, they are closed subgroups of $GL_{n}\mathbb{R}$(topologically we can always embed a complex manifold in a real manifold). We may view $GL_{n}\mathbb{R}$ as a subset of $\mathbb{R}^{n^{2}}$. As it is defined by the equation $\det(x)\not=0$, while $\det$ is a smooth function,we may conclude that $GL_{n}\mathbb{R}$ is an open subset of $\mathbb{R}^{n^{2}}$, thus admit the structure of a submanifold. On the other hand matrix inverse and multiplication are smooth in $\mathbb{R}^{n^{2}}$ as we may compute the inverse explicitly in linear terms via Kramer's rule. So {\bf{i}} is verified. We still need to verify {\bf{ii}}, which is the {\bf{Cartan's theorem}}. We are not going to prove this in here. The interest reader can consult \cite{Tao}. 
\end{proof}
\end{lemma}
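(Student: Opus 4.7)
The plan is to follow the two-step strategy already sketched in the excerpt. First, I would establish that $GL_n(\mathbb{R})$ itself carries the structure of a Lie group. For this, the key observation is that $GL_n(\mathbb{R}) = \det^{-1}(\mathbb{R} \setminus \{0\})$ sits inside $M_n(\mathbb{R}) \cong \mathbb{R}^{n^2}$ as the preimage of an open set under the continuous (in fact polynomial) determinant map, so it inherits the structure of a smooth $n^2$-dimensional manifold from the ambient Euclidean space. Then I would check directly that the multiplication map $(A,B) \mapsto AB$ is polynomial in the matrix entries, hence smooth, and that inversion $A \mapsto A^{-1}$ is smooth on $GL_n(\mathbb{R})$ because Cramer's rule expresses the entries of $A^{-1}$ as rational functions in the entries of $A$ with nonvanishing denominator $\det(A)$. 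This establishes step $(\mathbf{i})$.

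Next, I would reduce all the other classical groups to closed subgroups of a single $GL_N(\mathbb{R})$ and invoke Cartan's closed subgroup theorem (step $(\mathbf{ii})$). The subgroups $SL_n(\mathbb{R})$, $O_n$, $SO_n$, and $Sp(2n,\mathbb{R})$ are cut out of $GL_n(\mathbb{R})$ (resp. $GL_{2n}(\mathbb{R})$) by closed conditions of the form $\det(M) = 1$ or $M^{t} J M = J$, where both sides are continuous functions of the matrix entries, so each is a closed subgroup of a real general linear group. To handle the complex groups $GL_n(\mathbb{C})$, $SL_n(\mathbb{C})$, $U_n$, $SU_n$, and $Sp(n,\mathbb{C})$, I would use the standard embedding $M_n(\mathbb{C}) \hookrightarrow M_{2n}(\mathbb{R})$ sending $A + iB$ to $\bigl(\begin{smallmatrix} A & -B \\ B & A\end{smallmatrix}\bigr)$, which exhibits each complex classical group as a closed subgroup of $GL_{2n}(\mathbb{R})$ defined by real polynomial equations in the matrix entries. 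Finally, $Sp(n) = Sp(2n,\mathbb{R}) \cap O(2n)$ is a closed subgroup of $GL_{2n}(\mathbb{R})$ as an intersection of two closed subgroups.

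The hard part is really Cartan's closed subgroup theorem, whose full proof is what the excerpt defers to Tao's notes. Proving it from scratch would require developing the exponential map for matrix groups, showing that the set of matrices $X$ for which $\exp(tX)$ stays in the subgroup for all $t$ is a real subspace of $M_N(\mathbb{R})$, and then using this candidate Lie algebra together with $\exp$ to produce smooth charts around the identity and translate them by left multiplication to cover the whole subgroup. I would treat this as a black box in the writeup, exactly as the excerpt does, and present the proof as a verification that each classical group satisfies the two hypotheses $(\mathbf{i})$ and $(\mathbf{ii})$, leaving Cartan's theorem itself as the cited ingredient that makes the reduction work.
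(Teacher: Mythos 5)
Your proposal follows the same two-step route as the paper's proof: establish that $GL_{n}(\mathbb{R})$ is an open submanifold of $\mathbb{R}^{n^{2}}$ with smooth multiplication and Cramer's-rule inversion, then realize each classical group as a closed subgroup and cite Cartan's closed subgroup theorem as a black box. Your version is somewhat more careful than the paper's --- in particular you spell out the embedding $A+iB\mapsto\bigl(\begin{smallmatrix} A & -B \\ B & A\end{smallmatrix}\bigr)$ and observe that each group is cut out by closed polynomial conditions, where the paper only says the groups are ``closed under limit'' --- but the argument is essentially identical.
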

\remark The classical groups are a subset of the more general {\bf{matrix groups}}. A matrix group is any subgroup $H$ of $GL(n;\mathbb{C})$ with the following property: if $A_{n}$ is any sequence in $H$, and $A_{n}$ converges to some matrix $A$, then either $A\in H$, or $A$ is not invertible. It is easy to generalize the above proof to matrix groups. 

\definition The {\bf{Lie algebra}} of a Lie group is defined to be the tangent space at the identity element. \\

\example The Lie algebra of $O_{n}$ is $o_{n}$.

\example The Lie algebra of $SL_{n}$ is $sl_{n}$.

\example The Lie algebra of $GL(V)$ is $gl_{v}$.

Suppose $X$ is a vector field in a Lie group $G$. Since left multiplication is assumed to be smooth, the tangent vector $X_{a}$ represented by the differential of the curve $\gamma_{a}$ at a point $a$ in $G$ is mapped to another tangent vector $X_{ga}$ at the image $ga$ via the differential of the curve $g\gamma_{a}$. Now the action of $g$ on $X$ may be defined as $(a,X_{a})\rightarrow (a,X_{ga})$. 

\definition A {\bf{left invariant vector field }}of a Lie group is defined to be a vector field $X$ such that $X=gX$. 
\\ It is evident that from any element $x$ in the Lie algebra, we may extend to a left-invariant vector field in $G$. And vice versa from a left invariant vector field we may find a vector from the tangent space of the identity element of the Lie group (which we already defined as the Lie algebra). Though this is not rigorous, we somehow reach a one-to-one correspondence between the elements of the Lie algebra and the left invariant vector fields on $G$. To simplify notation we now denote the Lie algebra of $G$ by $\mathcal{G}$. 

\definition An {\bf{integral curve}} $V$ of a vector field $X$ is a parametric curve $f: \mathbb{R}\rightarrow G$ such that $df/dg_{g=g_{0}}=X_{g_{0}}$. It is obvious from the standard existence theorem of ODE (e.g Picard's theorem) that an integral curve start from $g_{0}$ with tangent vector $X_{g_{0}}$ always exists and is unique in the local neighborhood, hence we may speak of the {\bf{maximal integral curve}}.

\definition Given the {\bf{left-invariant}} vector field generated by $X_{g_{0}}\in \mathcal{G}$, for $\lambda\in \R$ the {\bf {exponential map}} $e$ maps $\lambda X_{g_{0}}$ to the image of the maximal integral curve starting with $X_{g_{0}}$ at time $\lambda$. \\ Since left-invariant vector field are well-defined for all $g\in G$, the maximal integral curve in the exponential map actually has domain $\mathbb{R}$. And since the integral curve may be prolonged, we see it is obvious that $$e^{sX_{g_{0}}}e^{tX_{g_{0}}}=e^{(s+t)X_{g_{0}}}$$ Hence the exponential map is a homomorphism. 

We now proceed to the main theorem of this section without a proof. The interested reader should consult \cite{Tao}
\theorem {\bf{Lie's first theorem}} Let G be a Lie group. Then the exponential map is smooth. Furthermore, there is an open neighbourhood $U$ of the origin in $\mathcal{G}$ and an open neighborhood $V$ of the identity in $G$ such that the exponential map $exp$ is a diffeomorphism from $U$ to $V$. 


\section{Lie algebras}
\remark This section is quite standard. It follows \cite{Fulton},\cite{Humphreys},\cite{Sternberg},\cite{Serre} and \cite{Bourbaki}. 
\definition Suppose $G$ is a Lie group, with $\mathcal{G}$ its Lie algebra, the {\bf{inner automorphism}} $\Psi:G\rightarrow Aut(G)$ is given by $\Psi(g)(h)=ghg^{-1}$. 
\definition For a homomorphism $\rho:G\rightarrow H$, the {\bf{adjoint representation}} $Ad(g)=(d\Psi_{g})_{e}:\mathcal{G}\rightarrow \mathcal{G}$ is the differential of the inner automorphism at the identity element. 
\remark The adjoint representation establishes the following commutative diagram:

$$\begin{CD}
T_{e}G @>(d\rho)_{e}>> T_{e}H\\
@VVAd(g)V @VVAd(p(g))V\\
T_{e}G @>(d\rho)_{e}>> T_{e}H
\end{CD}$$
equivalently we have $d\rho (Ad(g)(v))=Ad(\rho(g))(d\rho(v))$. because it is the differential of this diagram at $e$:
$$\begin{CD}
G @>\rho>> H\\
@VV\Psi_{g}V @VV\Psi_{\rho(g)}V\\
G @>\rho>> H
\end{CD}$$
This follows trivially since $$\Psi_{\rho(g)}\rho(h)={\rho(g)}{\rho(h)}(\rho(g))^{-1}=\rho(ghg^{-1})=\rho(\Psi(h))=\rho(ghg^{-1})$$ as $\rho$ is assumed to be a homomorphism. 

\definition The {\bf{Lie bracket}} is the differential of the adjoint representation. In terms of Lie algebra, we have a map $ad:\mathcal{G}\rightarrow End(\mathcal{G})$. In other words we obtained bilinear map:$\mathcal{G}\times \mathcal{G}\rightarrow \mathcal{G}$ as $ad(X)(Y)=Z$. Now we use $[X,Y]=ad(X)(Y)$ for $X,Y\in \mathcal{G}$.\\
\indent It is possible to explicitly compute $[X,Y]$ for classical(in general, matrix) groups. Consider their embedding in $GL_{n}\mathbb{R}$, then $Ad(g)(M)=gMg^{-1}$. Consider a path $\gamma$ with $\gamma(0)=e$ and $\gamma'(0)=X$. Then by our definition of $[X,Y]$ we have $[X,Y]=ad(X)(Y)=\frac{d}{dt}_{t=0}Ad(\gamma(t))(Y)$. Applying the product rule to $Ad(\gamma(t))(Y)=\gamma(t)Y\gamma(t)^{-1}$, we have $$\gamma'(0)Y\gamma(0)+\gamma(0)Y(-\gamma(0)^{-1}\gamma'(0)\gamma(0)^{-1})=XY-YX$$ From now on we shall identify $[X,Y]=XY-YX$ in our project.

\remark For matrix groups because of the commutator relationship we have the reknowned {\bf{Jacobi Identity}} for Lie algebra: $$[X,[Y,Z]]+[Y,[Z,X]]+[Z,[X,Y]]=0$$ This motivates the formal definition independent of Lie group $G$:

\definition A vector space $V$ over field $F$, with an operation $V\times V\rightarrow V$ denoted $(x,y)\rightarrow [x,y]$ is called a Lie algebra if the following axioms are satisfied:
\begin{itemize}
\item The bracket operation is bilinear. 
\item $[x,x]=0,\forall x\in V$. 
\item $[x,[y,z]]+[y,[z,x]]+[z,[x,y]]=0$, $\forall x,y,z\in V$. 
\end{itemize}

\remark The subsequent discussion use $V$ instead of $\mathcal{G}$ to stress that we are dealing with the abstract Lie algebra. 

\definition Let $V,V'$ be Lie algebras. A linear transformation $\phi:V\rightarrow V'$ is called a {\bf{homomorphism}} if $\phi([xy])=[\phi[x]\phi[y]],\forall x,y\in V$. A representation of a Lie algebra $L$ is a homomorphism $\phi:L\rightarrow gl(V)$.

\example The {\bf{adjoint representation}} space (we use the same name in the Lie algebra case) sends $x$ to $ad(x)$. It is clear that $ad$ is a linear transformation since the bracket is bilinear. To see it preserves the bracket, we calculate:
\begin{align}
[ad(x), ad(y)](z)=ad(x)ad(y)(z)-ad(y)ad(x)(z)=[x,[y,z]]-[y,[x,z]]=\\
[x,[y,z]]+[[x,z],y]=[[x,y],z]=ad[x,y](z)
\end{align} Its kernel is obviously $Z(V)$. 

\definition A subspace of a Lie algebra $V$ is called an {\bf{ideal}} of $V$ if $x\in V,y\in I$ implies $[xy]\in I$.

\definition Let $V$ be a Lie algebra, we define $V^{1}=[V,V]$, $V^{2}=[V^{1},V^{1}]$, etc. If $V^{n}$ eventually vanishes, then we say $V$ is {\bf{solvable}}.

\remark Let $V$ be a solvable subalgebra of $gl(V)$, $V$ of finite dimension. Lie proved that the matices of $L$ relative to a suitable basis are upper triangular. This folows from the more general theorem that $V$ must contain some general eigenvector for all the endomorphisms in $L$, which we will not prove in here. 
 
\lemma Let $V$ be a Lie algebra. 
\begin{itemize}
\item If $I$ is a solvable ideal of $L$ such that $V/I$ is solvable, then $V$ itself is solvable. 
\item If $I$, $J$ are solvable ideals of $L$, then so is $I+J$. 
\end{itemize}
\begin{proof}
For the first one, assume $V/I$ is solvable and $(V/I)^{i}=0$, then $V^{i}\in I$. But $I$ is assumed to be solvable, so $V$ must be solvable. The second one follows by establishing the isomorphism between $(I+J)/J$ and $I/(I\cap J)$. The right side is solvable, hence by the first one $I+J$ must be solvable. 
\end{proof}
\definition Let $V$ be a Lie algebra and let $S$ be a maximal solvable ideal. If $I$ is any other maximal solvable ideal of $V$, then by the second part of the lemma above $I+V$ would be an solvable ideal as well. Hence by Zorn's lemma $V$ has a maximal solvable ideal, which we denote as the {\bf{radical}} of $V$({\bf{Rad V}}). In case {\bf{Rad V}}=0, $V$ is called {\bf{semisimple}}. 
\definition Let $v\in V$, $v$ is called {\bf{ad-nilpotent}} if the series $v^{i}=[v,...[v,V]]=0$ in the end. Notice we may write $ad(v)$ in terms of matrices. If $v$ is not ad-nilpotent, then it has a unique decomposition $v=s+n, ad(v)=ad(s)+ad(n)$ where $ad(s)$ consists of diagonal matrices, and $n$ is ad-nilpotent(this follows from the Jordan decomposition). In this case we call $n$ and $s$ as {\bf{semisimple}}.

\indent Assume $V$ is semisimple, then not all elements in $V$ are nilpotent (otherwise $V$ would be equal to its radical). Then we may speak of its subalgebra consisting consisting purely of semisimple elements. We call such a subalgebra {\bf{toral}}.
\lemma A toral subalgebra of $V$ is abelian. 
\begin{proof}
Let $T$ be toral. Since $ad_{T}(x)$ can be diagonalized, we just have to show that $ad_{T}(x)=0$ for all $x\in T$. But this is true only if $ad_{T}x$ has no nonzero eigenvalues. Suppose $[xy]=ay$ for some $y\in T$, then $ad_{T}y(x)=-ay$ is an eigenvector of $ad_{T}y$ of eigenvalue 0. But we can write $x$ as a linear combination of eigenvectors of $ad_{T}y$, then after applying $ad_{T}y$ to $x$ all that is left is a combination of eigenvectors with nonzero eigenvalues. This contradicts the preceding conclusion. 
\end{proof}
\definition Let $V$ be a complex Lie algebra (vector space over $\C$), $H$ its maximal toral subalgebra of $V$. Since $H$ is abelian by the above lemma, $ad_{V}(H)$ is a commuting family of semisimple endomorphisms (see Definition 3.1.16) of $V$. By standard linear algebra $ad_{V}(H)$ is simultaneously diagonalizable. In other words $V$ is the direct sum of subspaces $V_{\alpha}=\{x\in V|[hx]=\alpha(h)x\}$ for all $h\in H$. The set of all nonzero $\alpha\in H^{*}$ for which $V_{\alpha}\not=0$ is denoted by $\Phi$; the elements of $\Phi$ are called the {\bf{roots}} of $V$ relative to $H$. With this notation we call a root space decomposition: $$V=\bigoplus_{\alpha\in \Phi}V_{\alpha}$$ This is called the {\bf{Cartan decomposition}}. 
\remark Notice that when $\alpha=0$ we have $V_{0}=C_{V}(H)$ be the normalizer of $H$. It is not a trivial fact that $H$ is its own normalizer (this result can actually push us to prove $H$ is also the maximal cartan subalgera of $V$). The interest reader should refer to \cite{Humphreys} (p80). In the light of this we have $$V=H\oplus \sum_{\alpha\not=0\in \phi}V_{\alpha}$$
\discussion In the light of the correspondence between Lie algebra and Lie group, we may prove that all the maximal toral subalgebras must be conjugates of one another, and any element of $\mathcal{G}$ belongs to one such maximal toral subalgebra. Further, every such maximal toral subalgebra corresponds to a maximal torus in the Lie group. The interested reader is invited to read \cite{Varadarajan}, page 367. 
\definition Consider the adjoint representation of the Lie algebra on itself, the {\bf{Killing form}} of a Lie algebra $V$ is a bilinear form defined by $$k(x,y)=\text{Tr}[ad(x)ad(y)]$$
\lemma Suppose $x,y,z$ are endomorphisms of $V$. If $V$ is finite dimensional, then $\text{Tr}([x,y]z)=\text{Tr}(x[y,z])$. 
\begin{proof}
This follows since $[x,y]z=xyz-yxz$, and $x[y,z]=xyz-xzy$, with $\text{Tr}(y(xz))=\text((xz)y)$.
\end{proof}
\lemma The Killing form is associative: $k([x,y],z)=k(x,[y,z])$. 
\proof 
This follows from the identity $$\text{Tr}([x,y]z)=\text{Tr}(x[y,z])$$ we proved above and the derivation relation $ad([x,y])=[ad(x),ad(y)]$ in the example above. 
\remark We remark without proof that if $V$ is semisimple, the Killing form $k$ is nondegenerate on $V$. In other words there does not exist element $v\in V$ such that $k(v,V)=0$. Use this result and the fact $H=C_{V}H$ noted earlier, we may prove that the restriction of $k$ to $H$ is nondegenerate as well. This allows us to associate to every linear function $\phi$ a unique element $t_{\phi}\in H$ given by $$\phi(h)=k(t_{\phi},h)$$ 
\lemma From {\cite{Sternberg}} We now claim the following relationship:
\begin{itemize}
\item $\Phi$ spans $H^{*}$. 
\begin{proof} Otherwise there exist $h$ such that $\alpha(h)=0$ for all $\alpha\in \Phi$. This implies $[h,V_{\alpha}]=0$ for all $\alpha$. So $[h,V]=0$. But this implies $h\in Z(V)$. Since $V$ is assumed to be semisimple, $h=0$. This contradicts with the hypothesis. 
\end{proof}
\item If $\alpha,\beta\in H^{*}$, and $\alpha+\beta\not=0$, then $V_{\alpha}$ is orthogonal to $V_{\beta}$ under the Killing form. 
\begin{proof} In other words, $\alpha$ and $\beta$ are orthogonal as well. Assume otherwise, then let $x\in V_{\alpha},y\in V_{\beta}$, associativity of the Killing form allows us to write $k([hx],y)=-k([xh],y)=-k(x,[hy])$. But this implies $\alpha(h)k(x,y)=-\beta(h)k(x,y)$, in other words $$(\alpha+\beta)k(x,y)=0$$ This force $k(x,y)=0$, hence $x$ is orthogonal to $y$, and $\alpha$ orthgonal to $\beta$. 
\end{proof}
\item If $\alpha\in \Phi$, then $-\alpha\in \Phi$ as well. 
\begin{proof}
Otherwise $V_{\alpha}\bot V$.
\end{proof}
\item $x\in V_{\alpha}$, $y\in V_{-\alpha}$, $a\in \Phi$ implies $[x,y]=k(x,y)t_{\alpha}$. 
\begin{proof}
We have \[k(h,[x,y])=k([h,x],y)=k(t_{a},h)k(x,y)=k(k(x,y)t_{a},h)=k(h,k(x,y)t_{a})\]
\end{proof}
\item 
$[V_{\alpha},V_{-\alpha}]$ is one dimensional with basis $t_{\alpha}$. 
\begin{proof}
This follows from the preceding and the fact that $V_{\alpha}$ cannot be perpendicular to $V_{-\alpha}$, as other wise it would be orthogonal to all of $V$.   
\end{proof}
\item $\alpha(t_{\alpha})=k(t_{\alpha},t_{\alpha})\not=0$. 
\begin{proof}
Otherwise we may choose $x\in V_{\alpha}$, $y\in V_{\beta}$ with $k(x,y)=1$, we get $$[x,y]=t_{a},[t_{a},x]=[t_{a},y]=0$$ So $x,y,t_{a}$ span a solvable three dimensional algebra. By Lie's theorem cited earlier, its adjoint action must be upper triangular. Hence $ad t_{a}$ must be nilpotent as it is in the commutator algebra of this subalgebra. But we know it is semisimple since it lies in $H$, which is the maximal toral subalgebra. This says $ad_{V}t_{\alpha}=0$. In other words and lie in the center, but this is impossible for we assumed $V$ is semisimple. 
\end{proof}
\item For every $\alpha$ there are $e_{\alpha}\in V_{\alpha}$, $f_{\alpha}\in V_{-\alpha}$, and $h_{\alpha}\in H$ such that $e_{\alpha},f_{\alpha},h_{\alpha}$ is isomorphic to $sl(2)$. 
\begin{proof}
We choose $e_{\alpha}$ and $f_{\alpha}$ satisfying $k(e_{\alpha},f_{\alpha})=\frac{2}{k(t_{a},t_{a})}$. Now if we set $h_{a}=\frac{2}{k(t_{a},t_{a})}$, then $e_{\alpha}$, $f_{\alpha}$, $h_{\alpha}$ span a subalgebra isomorphic to $sl(2)$, which is generated by 
$$\displaystyle e_{\alpha}\rightarrow
\left(\begin{array}{ccc}
0 & 1 \\
0 & 0 \\
\end{array}\right), f_{\alpha}\rightarrow
\left(\begin{array}{ccc}
0 & 0 \\
1 & 0 \\
\end{array}\right), h_{\alpha}\rightarrow
\left(\begin{array}{ccc}
1 & 0 \\
0 & -1 \\
\end{array}\right)$$
This is evident since we have $[e_{\alpha},f_{\alpha}]=h_{\alpha}$. $[h_{\alpha},e_{\alpha}]=\frac{2}{\alpha(t_{\alpha})}[t_{\alpha}x_{\alpha}]=\frac{2\alpha(t_{\alpha})}{\alpha(t_{\alpha})}e_{\alpha}=2e_{\alpha}$, similarly $[h_{\alpha},f_{\alpha}]=-2f_{\alpha}$. 
\end{proof}
\end{itemize}
\discussion Let $V$ be a semisimple Lie algebra. It has a {\bf{Cartan decomposition}}. For any $\alpha\in \Phi$, by the above discussion we may pick $h_{\alpha}\in [V_{\alpha},V_{-\alpha}]$ and $\alpha(h_{\alpha})=2$. Now elements of $\Phi$ span a vector space $E$. For any $\alpha$ we introduce the reflection $W_{\alpha}$ that reflects $E$ by the hyperplane $\Omega_{\alpha}=\beta\in \Phi:\langle h_{a},\beta \rangle=0$. In other words, $$W_{\alpha}(\beta)=\beta-\frac{2\beta(h_{\alpha})}{\alpha(h_{\alpha})}\alpha=\beta-\beta(h_{\alpha})\alpha$$ 

\definition The group generated by $W_{\alpha}$ is called the {\bf{Weyl group}}. 
\definition In the light of the connection between Lie groups and Lie algebras ({\bf{Lie's theorem}} in the Lie group section), we may define {\bf{Weyl group}} via the Lie group. Let $G$ be a Lie group and $\mathcal{G}$ its Lie algebra, then we may define the {\bf{Weyl group}} (denoted by W) by $$N(T)/T$$ In here $T$ is the maximal torus in the Lie group. The author could {\bf{not}} find from standard reference that the two definitions indeed coincide(though they are).

\section{Representation theory}
This section follows from \cite{Fulton} as well as \cite{Serre3}. Our discussion in here are restricted to finite groups. 
\definition A {\bf{representation}} of a group $G$ on $V$ is a homomorphism $\phi: G\rightarrow GL(V)$. A {\bf{subrepresentation}} of a group $G$ on $V$ is a vector subspace $W$ that admits $\phi|_{W}$ as a homomorphism. The {\bf{degree}} of the representation is the dimension of $V$. 

\definition A left $R$-{\bf{module}} is an abelian group $A$ with a ring $R$ that admits a group homormophism $f:R\rightarrow End(A)$ which is associative. In other words we have:
\begin{itemize}
\item $r(a+b)=ra+rb$.
\item $(r+s)a=ra+sa$.
\item $r(sa)=(rs)a$.
\end{itemize}

\remark We say that such a map gives $V$ the structure of a {\bf{$G$-module}}, for we can identity $A$ with $V$ and $G$ with its image in $GL(V)$. It is possible to define a more general {\bf{$F[G]$-module}} by the group ring $FG$, whose elements are of the form $\sum f_{i}g_{i}, f_{i}\in F,g_{i}\in G$, and multiplication is defined by $(f_{i}g_{i})(f_{j}g_{j})=(f_{i}f_{j})g_{i}g_{j}$. More generally we call $F[G]$ as the {\bf{group algebra}}. 

\definition A {\bf{$G$-linear map}} between two representations $V$ and $W$ of $G$ is a vector space map $\phi:V\rightarrow W$ such that $g(\phi(v))=\phi(g(v)),\forall v\in V$. If $\phi$ has an inverse $\phi^{-1}$ such that $\phi*\phi^{-1}=1_{V}$, then we say $\phi$ is an isomorphism. 
$$\begin{CD}
V @>\phi>> W\\
@VVgV @VVgV\\
V @>\phi>> W
\end{CD}$$
\remark Since representation can be treated as $G$-module, the direct sum  of $V$ and $W$ is also a representation. For the tensor product we cannot define $rv\otimes w=v\otimes rw$ as usual case for $R$-modules, which will not make sense. So we define the tensor product as a representation of $G$ via $g(v\otimes w)=gv\otimes gw$. Similarly $Hom(V,W)$ may be view as a representation of $G$ via $(g\phi)(v)=g\phi(g^{-1}v)$, which makes the following diagram commutative:
$$\begin{CD}
V @>\phi>> W\\
@VVgV @VVgV\\
V @>g\phi>> W
\end{CD}$$
\definition We define the subspace $Hom_{G}(V,W)$ as the subspace of $Hom(V,W)$ fixed under the action of $G$. It is obvious that this is just the space of all $G$-linear maps. 
\example The {\bf{permutation representation}}: Assume $G$ has certain group action on $X$. Now let $V$ be the vector space with basis $e_{x},x\in X$, and let $G$ act on $V$ by $g*\sum a_{x}e_{x}=\sum a_{x}e_{gx}$. 
\example The {\bf{regular representation}}: Let $V$ be the vector space spanned with characteristic function $e_{g}$, which takes value 1 on $g$, $0$ on other elements of $G$. Then the permutation representation on $V$ is called the regular represenation. 
\remark By a representation of $F[G]$ on a vector space $V$ we mean an algebra homomorphism $$FG\rightarrow \text{End}(V)$$ so that a representation $V$ of $FG$ means a left $FG$-module. Now a representation $\rho:G\rightarrow \text{Aut}(V)$ will extend by linearity to a map $\overline{\rho}:FG\rightarrow V$. Hence the representations of $FG$ correspond exactly to representations of $G$. The left $FG$-module that acts on $FG$ itself corresponds to the regular representation.\\
The discussion here is to try to motivate the reader on the similarity of the $\C G$ module decomposition and the Peter-Weyl theorem we will discuss later. 
\definition A representation that can be expressed as a direct sum of others is called a {\bf{reducible}} representation. The ones cannot are called {\bf{irreducible}} representation. In terms of $G$-modules, a $G$-module $V$ is called {\bf{irreducible}} if it has precisely two $G$-submodules(0 and itself). 
\lemma{Schur's lemma} \\If $V$ and $W$ are irreducible representations of $G$ and $\phi:V\rightarrow W$ is a $G$-module homomorphism, then
\begin{itemize}
\item Either $\phi$ is an isomorphism, or $\phi=0$.
\item If $V=W$, then $\phi=\lambda I$ for some $\lambda\in \mathbb{C}$, where $I$ is the identity. 
\end{itemize}
\begin{proof}
The first claim follows from the fact that $\ker \phi$ and Im$\phi$ are invariant subspaces (hence submodules). For the second, since $\mathbb{C}$ is algebraically closed, $\phi$ must have an eigenvalue $\lambda$. By the first claim $\phi-\lambda I=0$, hence $\phi=\lambda I$. 
\end{proof}
\discussion By Schur's lemma, we have $\dim Hom_{\C}(C,\bigoplus W_{i})=\sum \dim Hom_{\C}(V,W_{i})$. If $V$ is irreducible then $\dim \text{Hom}_{G}(V,W)$ is the mutiplicity of $V$ in $W$. Similarly if $W$ is irreducible then it is the mutiplicity of $W$ in $V$. In the case where both $V$ and $W$ are irreducible, we have:
\begin{center}
If $V\cong W$, then $\dim \text{Hom}_{G}(V,W)=1$; If $V\not\cong W$, then $\dim \text{Hom}_{G}(V,W)=0$. 
\end{center}
This is usually called {\bf{the intersection number}} of two representations. It can be regarded as a bilinear form. 
\definition If $V$ is a representation of $G$, its {\bf{character}} $\chi_{v}:G\rightarrow \C$ is the complex valued function on the $G$ defined by $$\chi_{V}(g)=\text{Tr}(g|_{V})$$ the trace of $g$ on $V$. In particular, we have $\chi_{V}(hgh^{-1})=\chi_{V}(g)$, so $\chi_{V}$ is constant on the conjugacy classes of $G$. Such a function is called a {\bf{class function}}. 
\remark Let $V$ and $W$ be representations of $G$. Then $$\chi_{V\oplus W}=\chi_{V}+\chi_{W}$$ $$\chi_{V\otimes W}=\chi_{V}\chi_{W}$$ $$\chi_{V^{*}}=\overline{\chi}_{V}$$ The proof is obvious. 
\lemma Consider $V_{G}$ as the subspace of $V$ invariant under $G$. If $G$ is finite, the dimension of $V_{G}:g(v)=v,\forall g\in G$ is $$\frac{1}{|G|}\sum_{g\in G}\chi_{V}(g)$$
\begin{proof}
We notice that $\phi(v)=\frac{1}{|G|}\sum_{g\in G}(g)(v)$ is the projection map from $V$ to $V_{g}$. Taking its trace we found its dimension. 
\end{proof}
\remark It is important to remark that from the discussion, because $V,W$ are finite dimensional, since $\text{Hom}(V,W)\cong V^{*}\otimes W$, we have $$\chi_{\text{Hom}(V,W)}(g)=\overline{\chi_{V}(g)}\cdot \chi_{W}(g)$$ which implies 
\begin{center}
$\frac{1}{|G|}\sum_{g\in G}\overline{\chi_{V}(g)}*\chi_{W}(g)=0$ if $V\not=W$\\ $\frac{1}{|G}\sum_{g\in G}\overline{\chi_{V}(g)}*\chi_{W}(g)=1$ if $V=W$. 
\end{center}
In other words, if we define inner product on the representation ring of $G$ via $$(\alpha,\beta)=\frac{1}{|G|}\sum_{g\in G}\overline{\alpha(g)}\beta(g)$$ the characters of the irreducible representations of $G$ are orthonormal. Further, a representation $V$ is irreducible if and only if $(\chi_{V},\chi_{V})=1$. Assume all $W_{j}$ distinct, if $V\cong \sum W_{i}^{k_{i}}$, then $(\chi_{V},\chi_{V})=\sum k_{i}^{2}$.
\example Let $G$ be any finite group, $R$ be the regular representation of $G$, then the character of $R$ is
\begin{center}
$\chi_{R}(g)=0$, if $g\not=e$\\ $\chi_{R}(g)=|G|$, if $g=e$. 
\end{center}
Thus $R$ is not irreducible if $G\not=\{e\}$. If we assume $R=W_{i}^{k_{i}}$, then $$k_{i}=(\chi_{W_{i}},\chi_{R})=\frac{1}{|G|}\chi_{W_{i}}(e)*|G|=\dim W_{i}$$ Thus any irreducible representation $V$ of $G$ appears in the regular representation $\dim W_{i}$ times. This showed, for example, there are only finitely many irreducible representations if $G$ is finite. 
\lemma We have the formula $$|G|=\dim R=\sum_{i}\dim(V_{i})^{2}$$
\begin{proof}
This is obvious from the above example. In particular, if we apply the above formula to the value of the character of the regular representation of an element other than the identity, we have $0=\sum(\dim W_{i})\cdot \chi_{V_{i}}(g)$, if $g\not=e$.
\end{proof}
\definition $G$ is called {\bf{complete reducible}} if any representation of $G$ is a direct sum of irreducible representations. 

\begin{lemma} Assume $F$ has characteristic 0. If $W$ is a subrepresentation of a representation $V$ of a {\bf{finite group}} $G$, then there is a complementary invariant subspace $W'$ of $V$, so that $V=W\oplus W'$. 
\end{lemma}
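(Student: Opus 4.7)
The plan is to exhibit $W'$ as the kernel of a $G$-equivariant projection onto $W$, constructed by averaging an arbitrary linear projection over the group. Because $F$ has characteristic $0$ and $G$ is finite, dividing by $|G|$ is permissible, which is exactly why the averaging trick works.

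First I would forget the $G$-action momentarily and choose an arbitrary vector-space complement $W_0$ of $W$ in $V$, so that every $v\in V$ decomposes uniquely as $v=w+w_0$. Let $\pi\colon V\to W$ denote the linear projection with kernel $W_0$; a priori $\pi$ need not intertwine the $G$-action, which is the whole problem. To fix this, I would introduce the averaged map
\[
\pi_0(v)\;=\;\frac{1}{|G|}\sum_{g\in G} g\cdot \pi\!\bigl(g^{-1}\cdot v\bigr),
\]
and then verify three things: (i) $\pi_0$ takes values in $W$, since $W$ is $G$-stable and $\pi$ maps into $W$; (ii) $\pi_0|_W=\operatorname{id}_W$, because for $w\in W$ we have $g^{-1}w\in W$, so $\pi(g^{-1}w)=g^{-1}w$ and the average collapses; (iii) $\pi_0$ is $G$-equivariant, by a direct change of variable $g\mapsto hg$ inside the sum. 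Together (i)–(iii) make $\pi_0$ a $G$-linear projection of $V$ onto $W$.

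Having $\pi_0$, I would set $W'=\ker \pi_0$. Since $\pi_0$ is $G$-equivariant, its kernel is automatically a subrepresentation, and since $\pi_0$ is a projection we get $V=W\oplus W'$ as vector spaces; the invariance of both summands then upgrades this to a decomposition of representations.

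The only genuine obstacle is the use of $\tfrac{1}{|G|}$, which requires $\operatorname{char} F\nmid |G|$; the hypothesis that $\operatorname{char} F=0$ covers this. Beyond that, every step is a short, essentially formal verification, so the proof should be quite compact; I would not expect any hidden difficulty. As a sanity check one could also give the unitary-style alternative — average an arbitrary Hermitian inner product to a $G$-invariant one and take $W'=W^{\perp}$ — but the projection-averaging argument is cleaner over a general field of characteristic zero and is the one I would write up.
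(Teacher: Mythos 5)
Your proposal is correct and is essentially the paper's own proof: both arguments average an arbitrary linear projection onto $W$ over the group and take the kernel of the resulting $G$-equivariant map as the invariant complement. The only cosmetic difference is that you normalize by $\frac{1}{|G|}$ so the averaged map is literally a projection restricting to the identity on $W$, whereas the paper keeps the unnormalized sum and observes that it acts on $W$ as multiplication by $|G|$, which is invertible since $\operatorname{char} F = 0$.
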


\begin{proof}
We choose an arbitrary subspace $U$ complementary to $W$. Let $\pi_{0}$ be the projection given by the direct sum decomposition $V=W\oplus U$. We average the map over $G$ by defining $$\pi(v)=\sum_{g\in G}g(\pi_{0}(g^{-1}(v))).$$
This will be a $G$-linear map from $V$ onto $W$, since $\pi(v)=g\pi(g^{-1}v)$. On $W$ it is simply multiplication by $|G|$(since $char F=0$). Its kernel will therefore be a subspace of $V$ invariant under $G$ and complement to $W$.  
\end{proof}
\remark Thus it folows finite groups over a vector space of characteristic 0 is completely reducible. And every representation of $G$ is of the form $\oplus W_{i}^{k_{i}}$ where $W_{i}$ are $G$'s irreducible representations. \\
We prove these theorems for finite groups to motivate the similar situation in Lie groups where Frobenius reciprocity would play a major role. We shall discuss that further in section about Bott's paper.  
\section{Interlude}
Weyl proved the following theorem for semisimple Lie algebra. This enable us, for example, to speak of the {\bf{representation ring}} of $G$, which we will define later. 
\theorem {\bf{Weyl's theorem}}: 
\begin{center}
If $\mathcal{G}$ is a semisimple Lie algebra, then $\mathcal{G}$ is completely reducible. 
\end{center}
\example Let $G$ be $S_{n}$ operating on $\mathbb{C}^{n}\cong \oplus_{i=1}^{n} \mathbb{C}e_{i}$.The {\bf{standard representation}} of $G$ on the subspace $\sum a_{i}=0$ is given by permuting the basis elements. 
\example Let $G$ be a matrix group. By definition it can be embedded in $GL(n,\mathbb{C})$. The {\bf{standard representation}} of $G$ is given by its action on the space $\mathbb{C}^{n}$. For example, $SU(2)$ has a standard representation of degree 2. 
\discussion In the following we discussion the $\mathbb{C}[G]$ module case, where $G$ is assumed to be finite. Although this is irrevlant to our project, it serves a good motivation for the analgous {\bf{Peter-Weyl}} theorem which will be integral to Bott's theorem in later section. 

\theorem Let $\{W_{i}\}$ denote the irreducible representations of $G$. Then $\mathbb{C}[G]\cong \bigoplus \text{End}(W_{i})$ 
\begin{proof}
For any representation $W$ of $G$, the map $G\rightarrow \text{Aut}(W)$ extends by linearity to a map $\mathbb{C}[G]\rightarrow \text{End}W$. Now applying this to each of the irreducible representations $W_{i}$ gives us a canoical map $$\phi:\mathbb{C}[G]\rightarrow \bigoplus End(W_{i})$$ We claim $\phi$ is an isomorphism: It is obviously injective. By the lemma above both sides have dimension $\sum \dim W_{i}^{2}$. Hence $\phi$ is an isomorphism. 
\end{proof}
\section{Lie group, Lie algebra revisited}
\definition A {\bf{representation}} of a Lie algebra $\mathcal{G}$ on a vector space $V$ is simply a map of Lie algebras $$\rho: g\rightarrow gl(V)=\text{End}(V)$$ a linear map that preserves brackets, such that $$[X,Y](v)=X(Y(v))-Y(X(v))$$ 
\remark In the light of {\bf{Lie's first theorem}} which establishes the correspondence between Lie group and its corresponding Lie algebra, we may claim the following:(quoted verbatim from \cite{Fulton})
\theorem
If $G$ and $H$ are Lie groups with $G$ connected and simply connected, the maps from $G$ to $H$ are in one-to-one correspondence with maps of the associated Lie algebras, by associating $\rho:G\rightarrow H$ its differential $(d\rho)_{e}:g\rightarrow h$. 
\remark 
This implies, in particular that the representations of a connected and simply conneced Lie group are in one-to-one correspondence with representations of its Lie algebra. It also has some unexpected consequence, for example Lie algebra act on the tensor product by derivation: $X(v\otimes w)=X(v)\otimes w+v\otimes X(w)$. Hence suppose $v,w$ are eigenvectors for $\mathcal{G}$ with eigenvalue $\alpha$ and $\beta$, then $$X(v\otimes w)=\alpha(v\otimes w)+\beta(v\otimes w)=(\alpha+\beta)v\otimes w$$ 

\definition A Lie group is called {\bf{semisimple}} if its Lie algebra is semisimple. 
\definition Let $G$ be a semisimple Lie group and $\mathcal{G}$ its associated Lie algebra. Assume the Cartan decomposition of $\mathcal{G}$ by adjoint representation is $$\mathcal{G}\cong H\bigoplus_{\alpha\in \Phi\not=0} \mathcal{G}_{\alpha}$$ where $\Phi$ is the set of all roots. Since $H$ is abelian, its action can be diagonalized on $V$. In other words $V$ admit a direct sum decomposition \[
V=\bigoplus V_{\alpha}
\] where $\alpha\in H^{*}$. So $h(v)=\alpha(h)(v)$ for all $h$ in $H$,$v$ in $V$. $\alpha$ is called a {\bf{weight}} of the representation $V$, and the corresponding $V_{\alpha}$ are called {\bf{weight spaces}}. 

Let $\beta$ be a root of the adjoint representation in $H^{*}$, $\alpha$ be a weight of $V$. Let $g_{\beta}\in \mathcal{G}_{\beta}$, $v_{\alpha}\in V_{\alpha}$. We claim that $$g_{\beta}(v_{\alpha})\in V_{\alpha+\beta}.$$ In other words the action of root spaces 'shift' the weight space by the root vector. We have the relationship $$h(g_{\beta}v_{\alpha})=g_{\beta}h(v_{\alpha})+[h,g_{\beta}]v_{\alpha}=\alpha(h)g_{\beta}(v_{\alpha})+\beta(h)g_{\beta}v_{\alpha}=(\beta+\alpha)(h)g_{\beta}(v_{\alpha})$$ Hence $g_{\beta}v_{\alpha}\in V_{\beta+\alpha}$. If we assume $V$ to be finite dimensional, then it is obvious there must be a maximal vector among $V_{\beta+n\alpha}$. In general such a weight vector is called {\bf {the highest weight vector}} or {\bf {dominant weight vector}}. 
\remark Assume $V$ to be irreducible, it is not a trivial fact that $V$ is a {\bf{cyclic module}} generated by its dominant weight vector, for reference, see \cite{Sternberg} Chapter 7. To explain what is a cyclic module we may envision the shift along a given vector by all other vectors. 
\\\indent In other words, starting with a maximal weight vector $v$, we may apply elements in $\mathcal{G}_{\alpha}$ to shift $v$ until it is invariant under the Weyl group, then we (recover) $V$ from the maximal weight vector. Thus the problem of finding irreducible representations of $G$ translates to find maximal weight vectors of $V$.  

\section{Bott's theorem[unfinished]}
Let $\phi:\mathcal{G}\rightarrow gl(V)$ be a (finite dimensional) representation of a semisimple Lie algebra. Then $\phi$ is completely reducible. 
\definition The {\bf{representation ring}} of $G$ is generated by its irreducible representations with two operartions: $+=\oplus$ and $\times=\otimes$. To define the difference for two representations, we introduce the formal difference: $E-F=G-H\leftrightarrow E\oplus H\cong G\oplus F$. It is easy to verify that all the axioms for a commuative ring is satisfied: 
This ring is denoted by $R(G)$. Thus, for $G$ that is completely reducible, the elements of $R(G)$ are of the form $\sum a_{\lambda}V_{\lambda}$, where $V_{\lambda}$ ranges over the irreduicble representations of $G$. 

\remark The elements of the representation ring is called {\bf {virtual representations}} of $G$. We may take the {\bf{formal completion}} of $R(G)$, whose elements consists of infinite sums $\sum a_{\lambda}V_{\lambda}$.

\chapter{My work on $SU(n)$}
Throughout this section and follows, capitalized letters imply the Lie group, whereas lower case letters imply the Lie algebra. 
\section{General introduction}
This section follows Chapter 23 of \cite{Fulton}. \\
For semisimple Lie algebras in general, since they are completely reducible we may describe them in terms of irreducible subrepresentations. Let $\Lambda$ be the weight lattice of $\mathcal{G}$, and let $\mathbb{Z}[\Lambda]$ be the integral group ring on the abelian group $\Lambda$. We now define $e_{\lambda}$ for the basis element correpsonding to the weight $\lambda$. The elements of $\mathbb{Z}[\Lambda]$ are expressions of the form $\sum n_{\lambda}e_{\lambda}$. Elements of this ring have all but finitely many number of terms being zero. 

We now define a {\bf{character homomorphism}} as
Char: $R(\mathcal{G})\rightarrow \mathbb{Z}[\Lambda]$
by the formula $\text{Char}[V]=\sum \dim(V_{\lambda})e(\lambda)$. In here $V_{\lambda}$ is the weight space of $V$ for the weight $\lambda$ and $\dim V_{\lambda}$ is its multiplicity. This is clearly an additive homomorphism. We claim the follows:
\begin{itemize}
\item The map is injective. This is because a representation of $\mathcal{G}$ is determined by its weight spaces. Since each irreducible subrepresentation is a cyclic module generated by the highest weight vector, this follows naturally. 
\item The product in the group ring $\mathbb{Z}[\Lambda]$ is determined by $e(\alpha)\cdot e(\beta)=e(\alpha+\beta)$. Under this definition we claim that Char is a ring homomorphism. Clearly it is an additive homomorphism. Now we notice the fact that $$\displaystyle(V\otimes W)_{\lambda}\cong_{\mu+v=\lambda}V_{\mu}\otimes W_{v}$$ 
\item The image of Char is contained in the ring of invariants $\mathbb{Z}[\Lambda]^{W}$. This comes down to the fact that, for any irreducible representation $V$, the weight spaces is invariant under the action of the Weyl group. 
\end{itemize}

It is interesting to investigate the precise relationship between representation ring and character map. To do that we need to introduce the notion of {\bf fundamental weights}. 

\definition Let $\Phi$ be the union of roots. A subset $\Delta$ of $\Phi$ is called a {\bf{base}} if:
\begin{itemize}
\item Let $E$ be the vector space generated by $\Phi$, then $\Delta$ is a basis of $E$.
\item Each root $\beta$ can be written as $\beta=\sum k_{\alpha}\alpha,(a\in \Delta)$, with integral coefficients $k_{\alpha}$ all nonnegative or all nonpositive. 
\end{itemize}
It is not trivial that every root system contains a base (proving that would require a theorem establishing the correspondence between bases and Cartan subalgebras), so we omit that here. 
\definition A \{Weyl chamber\} is one of the maximal connected components in the weight space that is moved by the Weyl group.

Now that we have a definition of base, let $\Delta$ be $\{\alpha_{1},...\alpha_{l}\}$. The vectors $\frac{2\alpha_{i}}{(\alpha_{i},\alpha_{i})}$ again form a basis for $E$. Now $\lambda_{i}$ be the due basis such that $\frac{(2\lambda_{i},\alpha_{j})}{(\alpha_{j},\alpha_{j})}=\Delta_{ij}$. Hence all $(\lambda_{i},\alpha)$ are nonnegative integers, and any $\lambda$ such that $(\lambda,\alpha)\in \mathbb{Z}$ for all $\alpha\in \Phi$ must be generated by $\lambda_{i}$. We call $\lambda_{i}$ the {\bf{fundamental weights}}. Geometrically they are the weights along the edges of the Weyl chamber. Let $\Gamma_{i}$ be the classes in $R(\mathcal{G})$ of the irreducible representations with highest weights $\lambda_{i}$. 

\theorem We claim that $\mathbb{Z}[\Lambda]^{W}$ is a polynomial ring on the variables Char$(\Gamma_{i})$. If we take variables $U_{i}$ and map the polynomial ring on the $U_{i}$ to $R(\mathcal{G})$ by sending $U_{i}$ to $\Gamma_{i}$ we have $$\mathbb{Z}[U_{1},...U_{n}]\rightarrow R(\mathcal{G})\rightarrow \mathbb{Z}[\Lambda]^{W}$$ which means:
\begin{itemize}
\item The representation ring $\mathcal{G}$ is a polynomial ring on the variables $\Gamma_{i}$.
\item The homomorphism $R(\mathcal{G})\rightarrow \mathbb{Z}[\Lambda]^{W}$ is an isomorphism.
\end{itemize}

\remark For the proof we refer the reader to \cite[p.376]{Fulton}. A closely related theorem can be found in \cite{Bott2}, that if $G$ is compact and simply-connected, then $R(G)$(complex representations) is a polynomial ring. 

\example Now in particular for classical groups, its representation ring is isomorphic to $\mathbb{Z}[\Lambda]^{W}$. In light of this we may identify $R(sl_{3})=\Z[x+y+z,xy+yz+zx]/(xyz-1), R(T)=\Z[x,y,z]/(xyz-1)$, for example. In here $x,y,z$ are the weights $L_{i}$ such that $L_{i}(h_{j})=\delta_{ij}$ where $h_{i}$ is the $i$th element in $H$ that has 1 on $i$ position and 0 elsewhere. 

\section{Steinberg's basis}
We briefly summarize the principal theorem Steinberg proved in his paper \cite{Steinberg}. Sterinberg give a basis of $R(G)$ over $R(T)$ on simply-connected Lie groups as follows:

\discussion Let $T$ be the maximal torus of $G$, and $X$ the character group of $T$. Then we know $R(G)\cong \Z[X]^{W}$ via restriction to $T$, even if $G$ is not semisimple. Thus $R(T)$ is a free-module over $R(G)$ is equivalent to $\Z[X]$ be a free-module over $\Z[X]^{W}$. 

\definition Let $\Phi$ denote the root system of $G$ relative to $T$, $\Phi^{+}$ the set of positive roots and $\mathcal{A}$ the corresponding basis of simple roots. Then since $sl_{n}$ is simply-connected, we claim the fundamental weights $\{\lambda_{\alpha}\}$ defined by $(\lambda_{\alpha},b^{*})=\delta_{\alpha b}$ with $b^{*}=\frac{2b}{(b,b)}$ being the coroot of $b$. For any $v\in W$, let $\lambda_{v}$ denote the product in $X$ of those $\lambda_{\alpha}$ for which $\alpha\in \mathcal{A}$ and $v^{-1}a<0$.

\begin{theorem}[Steinberg]: In the principal case, we get $\Z[X]$ over $\Z[X]^{W}$ with $\{w^{-1}\lambda_{w}|w\in W\}$ as a basis. 
\end{theorem}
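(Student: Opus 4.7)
The plan is to leverage Pittie's theorem (Theorem 1.1) directly. In the simply-connected case Pittie's theorem guarantees that $\Z[X]$ is a free module of rank $|W|$ over $\Z[X]^W$, and the proposed basis $\{w^{-1}\lambda_w : w \in W\}$ has exactly $|W|$ elements. Over a commutative Noetherian ring any surjection between free modules of the same finite rank is automatically an isomorphism, so it suffices to prove that these $|W|$ elements \emph{span} $\Z[X]$ as a $\Z[X]^W$-module; linear independence will then come for free.

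Before attacking spanning, I would sanity-check the endpoints. For $w = e$ the condition $e^{-1}\alpha < 0$ fails for every simple $\alpha \in \mathcal{A}$, so $\lambda_e = 1$ (empty product) and the basis element is $1$. For the longest element $w_0$ every simple root satisfies $w_0^{-1}\alpha < 0$, so $\lambda_{w_0} = \prod_{\alpha \in \mathcal{A}} \lambda_\alpha$ and the basis element is $w_0^{-1}$ applied to this product. In general the simple roots contributing to $\lambda_w$ form the descent set of $w$, a standard Coxeter-combinatorial invariant.

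The strategy to prove spanning is a triangularity argument. Order $W$ by length and, for each $w \in W$, identify a distinguished weight $\mu_w \in X$ appearing in the monomial expansion of $w^{-1}\lambda_w$. The goal is to show that the map $w \mapsto \mu_w$ is injective, so that the matrix expressing $\{w^{-1}\lambda_w\}$ in the monomial basis $\{e^\mu\}$ of $\Z[X]$ is triangular with unit entries on the distinguished diagonal. Spanning would then follow by induction on the chosen order: given an arbitrary monomial $e^\mu$, find the (unique) $w$ with $\mu_w$ matching the leading term of $e^\mu$, subtract a suitable orbit-sum multiple of $w^{-1}\lambda_w$ with coefficient in $\Z[X]^W$ to cancel the leading term, and iterate.

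The main obstacle I expect is proving that $w \mapsto \mu_w$ is injective. This is a purely combinatorial statement about the Weyl group action on a product of fundamental weights indexed by the descent set, and managing the bookkeeping carefully is delicate. If this direct combinatorial route becomes unwieldy, I would fall back on a determinantal strategy: compute $\det\bigl(u(v^{-1}\lambda_v)\bigr)_{u,v \in W}$ in the fraction field of $\Z[X]$, and attempt to identify it up to sign with a power of the Weyl denominator $\prod_{\alpha > 0}(1 - e^{-\alpha})$. Non-vanishing of this determinant, combined with the rank count from Pittie's theorem, would again force the basis property.
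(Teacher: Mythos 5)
The paper does not actually prove this theorem: it is quoted from Steinberg's 1975 paper \emph{On a theorem of Pittie} and the surrounding text only says that ``Steinberg proved this theorem using pure classical Lie algebra methods'' before moving on to compute the basis explicitly for $SU(3)$. So the relevant comparison is with Steinberg's own argument, whose engine is precisely the determinant you relegate to a fallback: he evaluates $\det\bigl(u(v^{-1}\lambda_{v})\bigr)_{u,v\in W}$ (it comes out, up to sign and a monomial, as a power of the Weyl denominator), and then solves $f=\sum_{v}c_{v}\,v^{-1}\lambda_{v}$ by Cramer's rule, checking that the resulting coefficients $c_{v}$ lie in $\Z[X]^{W}$ rather than merely in the fraction field. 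Your proposal, as written, carries out neither of its two routes: the triangularity argument hinges on the injectivity of $w\mapsto\mu_{w}$ and on the unit-diagonal claim, which you yourself flag as ``the main obstacle'' and leave unproved, and the determinant identity is likewise only conjectured. Since everything else in the plan (the rank count, the surjective-implies-bijective reduction) is routine, the unproved step \emph{is} the theorem; the proposal is a plan, not a proof.

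Two further points would need repair even if the injectivity were granted. First, the spanning induction ``subtract the leading term and iterate'' requires a well-founded order on the monomials $e^{\mu}$ of the Laurent ring $\Z[X]$; ordering $W$ by length orders the $|W|$ basis candidates but not the infinitely many weights $\mu$, and since weights can be arbitrarily anti-dominant there is no obvious termination argument without invoking something like dominance order within each $W$-orbit together with finiteness of the weights below a fixed dominant weight. Second, Pittie's Theorem 1.1 as stated in the paper gives freeness of $R(T)$ over $R(G)$ but not the rank; the count $|W|$ needs a separate localization argument (pass to fraction fields and use that the extension has degree $|W|$). Neither is fatal, but both are gaps in the written argument, on top of the missing central combinatorial lemma.
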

\begin{discussion} Steinberg proved this theorem using pure classical Lie algebra methods. But despite its clearity, the above formula does not give us an explicit formula for the basis elements regarding $R(T)$ in terms of $R(sl_{n})$. Thus we wish to do explicit computation in this case. 
\end{discussion}
\example 
We now shall carry out the computation for $sl(n,\C)\cong su(3)$. In general the cartan subalgebra $h$ is generated by elements $\mathcal{H}=\{A_{i}:e_{ii}-e_{i+1,i+1}\}$ of dimension $n-1$. Thus the basis for roots can be chosen to be $A_{i}^{*}$ as $e_{i,i}^{*}-e_{i+1,i+1}^{*}$. If we select $e_{i,i}^{*}-e_{j,j}^{*},\{i<j\}$ as the positive roots, and $\mathcal{A}=e_{i,i}^{*}-e_{i+1,i+1}^{*},i\in\{1,..n-1\}$ the simple roots, then the correspdoning well-known fundamental weights are $w_{i}=\sum_{k=1}^{i} e_{k,k}$. Thus Steinberg's formula can be explicitly computed. 

For the $n=3$ case. And $\mathcal{A}$ consists of $A_{1}=e_{1,1}^{*}-e_{2,2}^{*}$ and $A_{2}^{*}=e_{2,2}^{*}-e_{3,3}^{*}$ respectively. The fundamental weights are $\lambda_{A_{1}}=e_{1,1}^{*}$ and $\lambda_{A_{2}}=e_{1,1}^{*}+e_{2,2}^{*}$. The Weyl group $W\cong S^{3}$ acts on permuting the indices, thus we have its six elements to be $1,(12)3,(13)2,(23)1,(123),(132)$. 

For $(1)$ we have $\lambda_{1}$ as a basis element. But we {\bf{do not}} have any element in $\mathcal{A}$ such that $a<0$ since both $A_{1}^{*}$ and $A_{2}^{*}$ are positive. Therefore $1$ can only correspond to the identity element 0. 

For $(12)$ we have $(21)\lambda_{12}$ as a basis element. By definition we have $\lambda_{12}=\prod \lambda_{a}$ such that $(21)(a)<0$. The only element satisfying the role is $A_{1}$, and $\lambda_{A_{1}}=e_{1,1}^{*}$. Thus we have the basis element to be $(21)e_{1,1}^{*}=e_{2,2}^{*}$. 

For $(13)$ we have $(31)\lambda_{13}$, where as $\lambda_{13}=\prod \lambda_{a}:(31)(a)<0$. Both elements satisfy the relationship and we have $\lambda_{A_{1}}^{*}\otimes \lambda_{A_{2}}^{*}$ having highest weight $2e_{1,1}^{*}+e_{2,2}^{*}$. Thus the basis element turns out to be $(31)(2e_{1,1}^{*}+e_{2,2}^{*})=2e_{3,3}^{*}+e_{2,2}^{*}$. 

For $(23)$ we have $(32)\lambda_{23}$, and $\lambda_{A_{2}}=e_{1,1}^{*}+e_{2,2}^{*}$. Thus $(32)e_{1,1}^{*}+e_{2,2}^{*}=[e_{1,1}^{*}+e_{3,3}^{*}]$.

For $(132)$, its inverse is $(123)$. We have $(123)\lambda_{132}=(123)\prod \lambda_{a}:(123)(a)<0$. We have the basis to be $(123)(e_{1,1}^{*}+e_{2,2}^{*})=e_{2,2}^{*}+e_{3,3}^{*}$. 

For $(123)$, its inverse is $(132)$. We have $(132)\lambda_{123}=(132)\prod \lambda_{a}:(132)(a)<0$. This corresponds to $(132)(e_{1,1}^{*})=e_{3,3}^{*}$.

In vector forms, we found the basis for $SU(3)$ is $$(0,0,0),(0,1,0),(0,1,2),(1,0,1),(0,1,1),(0,0,1)$$ In other words it is $$\{1, y,z, xz, yz,  yz^{2}\}$$

\remark We omit a similar discussion on $SU(4)$ because 24 elements would take too much space. My advisor Gregory Landweber has calculated this on the computer. We remark that Steinberg's basis can be viewed geometrically as the lowest weights such that when perturbed we get one weight in each chamber. In general Steinberg's basis is difficult to calculate. 
\section{$n=2$}
We shall work with Lie algebra $su_{n}$ since $SU_{n}$ is simply-connected. For all purposes we identify $su_{n}$ and $sl_{n}\mathbb{C}$ because the complexification does not alter the lie algebra structure. 
\discussion This is the most basic and important case. $SU(2)$ and $SL(2,\C)$ are the Lie groups corresponding to $sl_{2}\mathbb{C}$. The subsequent discussion is a brief summary of Fulton and Harris, Chap 11.1, Chap 23.2. As Lie algebra, $sl_{2}\mathbb{C}$ is generated by 
$$\displaystyle X\rightarrow
\left(\begin{array}{ccc}
0 & 1 \\
0 & 0 \\
\end{array}\right), Y\rightarrow
\left(\begin{array}{ccc}
0 & 0 \\
1 & 0 \\
\end{array}\right), H\rightarrow
\left(\begin{array}{ccc}
1 & 0 \\
0 & -1 \\
\end{array}\right)$$
satisfying $$[H,X]=2X,[H,Y]=-2Y,[X,Y]=H$$  Hence by our discussion in the representation theory earlier, $X\in \mathcal{G}_{2}$, $Y\in \mathcal{G}_{-2}$. Now by the weight space decomposition, let $V$ be an irreducible representation for $sl_{2}\mathbb{C}$. Then its weight spaces $V_{\alpha}$ must be invariant under the action of $X$ and $Y$. Thus all the weights are congruent by $2$ because we have the cyclic module structure. 
\\\indent Since $V$ is finite dimensional, assume the maximal weight to be $n$, and its corresponding vector to be $v$. The weight space must be invariant under the Weyl group, which actions by reflection, so the lowest weight must be $-n$. 
\lemma We now claim that the sequence $\{v,Y(v),...\}$ generated $V$. 

\begin{proof} Indeed we only need to verify that it is invariant under the action of $sl_{2}\mathbb{C}$. For $Y^{m}(v)$ in the sequence we have
$$Y(Y^{m}(v))=Y^{m+1}v, H(Y^{m}(v))=(n-2m)Y^{m}(v)$$ and $$X(Y^{m})(v)=[X,Y]Y^{m-1}(v)+Y(X(Y^{m-1}(v)))=H(Y^{m-1}(v))+Y(X(Y^{m-1}(v)))$$ 

Since $H(Y^{m-1}(v))=(n-2m+2)Y^{m-1}(v)$, and for $m=1, Y(X(v))=0$, proceed by induction we may assume the general case to be $$X(Y^{m}(v))=m(n-m+1)Y^{m-1}(v)$$ Now assume this works for $m-1$, we have:$X(Y^{m-1}(v))=(m-1)(n-m+2)Y^{m-2}v$, for $m$ the above calculation gives $$X(Y^{m}(v))=(n-2m+2)Y^{m-1}(v)+(m-1)(n-m+2)Y^{m-1}(v)\\
=m(n-m+1)Y^{m-1}(v)$$ which verified our claim. 
\end{proof}
\theorem The irreducible representations of $sl_{2}$ is classified by their highest integer weight. The representation $V^{n}$ with highest weight $n$ is $n+1$ dimensional symmetric around the origin with weights $\{n,n-2,..-n\}$.
\begin{proof} Now if $m$ is the smallest power of $Y$ annihilating $v$, then from the relation above we have $0=X(Y^{m})(v)=m(n-m+1)Y^{m-1}(v)$. Since $Y^{m-1}(v)\not=0$, we have $n=m-1$. In particular $n$ is an integer. Hence the representation $V^{n}$ with highest weight $n$ is $n+1$ dimensional symmetric around the origin with weights $\{n,n-2,..-n\}$. This coincide with our reasoning with the Weyl group.
\end{proof}

We now carry out our first example:

\lemma The representation ring of $sl_{2}\mathbb{C}$ is isomorphic to $\mathbb{Z}[t+t^{-1}]$, the power series of $w=t+t^{-1}$ in integer coefficients.  

\begin{proof}
Let $V_{1}$ denotes the standard representation of $sl_{2}\mathbb{C}$ on $\mathbb{C}^{2}$. It can be decomposed as $\mathbb{C}v_{1}\oplus \mathbb{C}v_{-1}$ with $v_{1}=
\left(\begin{array}{ccc}
1  \\
0 \\
\end{array}\right)$, and $v_{-1}=
\left(\begin{array}{ccc}
0  \\
1 \\
\end{array}\right)$. They are eigenspaces of $H$ of eigenvalue $1$ and $-1$ respectively. The proof would be immediate if we show the map extending $V_{1}\cong \mathbb{C}v_{1}\oplus \mathbb{C}v_{-1}\mapsto t+t^{-1}$ is an isomorphism from $R(sl_{2}\mathbb{C})$ to $\mathbb{Z}[t+t^{-1}]$. To do that we need to show every element in $R(sl_{2})$ is generated by $V_{1}$ so we have a unique extension, then the isomorphism would naturally follows. 

Generating elements of $R(sl_{2}\mathbb{C})$ are of the form $\sum e_{n}V_{n}$, where $e_{n}V_{n}$ denotes the irreducible representation with highest weight $n$ and multiplicity $e_{n}\in \mathbb{Z}$. Assume $V_{n}$'s highest weight vector is $v_{n}$, and $V_{m}$'s highest weight vector is $v_{m}$. Then $v_{n}\otimes v_{m}$ have highest weight $n+m$. Consider the fact that $V_{n}$ is spanned by $v_{n},v_{n-2}...v_{-n}$, the eigenvalue and multiplicity of vectors of the form $v\otimes w, v\in v_{n},w\in v_{m}$ can be explicitly calculated. Thus we may decompose $V_{n}\otimes V_{m}$ as direct sum of $V_{i}$s quite explicitly. 

As the most basic example consider $V_{1}^{\otimes^{n}}$. We know $V_{1}$ is generated by $v_{1},v_{-1}$, and $V_{1}\otimes V_{1}$ is generated by $v_{2},v_{-2},v_{0}$ (hence isomorphic to $V_{2}$), further $V_{1}\otimes V_{1}\otimes V_{1}$ is generated by $v_{3},v_{-3},{\bf{v_{1}, v_{-1}}}$(appear twice, hence isomorphic to $V_{3}\oplus V_{1}$). Assume $V_{n}$ can be generated by $V_{1}$, we have $$V_{n}\otimes V_{1}=V_{n+1}\oplus V_{n-1}$$ Thus every element of $R(sl_{2}\mathbb{C})$ is generated by $V_{1}$ and the map we constructed is an isomorphism.  
\end{proof}

\remark It is obvious that the map we established in here is just the Char (stands for character) map introduced earlier in section 1. 

\lemma The representation ring of $H\in sl_{2}\mathbb{C}$ is isomorphic to $\mathbb{Z}[t,t^{-1}]$, the Laurent series of $t$ in integers. In here $
H=
\left(\begin{array}{ccc}
1 & 0 \\
0 & -1 \\
\end{array}\right)$.
\begin{proof}
Suppose $\rho:H\rightarrow gl(V)$ be a representation of $H$. We know$H^{2}=1$.Therefore $\rho(H)\cdot \rho(H)=1$ as well. Thus $\rho(H)$ must be either a reflection or a rotation along the axis in $\mathbb{C}^{n}$, or the identity matrix with suitable base changes. if $\rho(H)$ is a reflection then it fixes a $n-1$ dimensional subspace isomorphic to $\mathbb{C}^{n-1}$ and reflects its orthgonal complement by $-1$. Thus in this case $V\cong W\oplus W_{-1}$, with $W$ being the trivial representation $H(w)=w$, and $W_{-1}$ generated by eigenvector $w_{-1}$. If $\rho(H)$ is a rotation along the axis, then it fixes a $n-2$ dimensional subspace and sends its 2-dimensional orthogonal complement to its negative. Thus $V\cong W\oplus W_{-1}\oplus W_{-1}$. Hence the generating element in the representation ring is of the form $W\bigoplus W_{-1}$, with $\rho_{w}\cong I$ and $\rho_{w_{-1}}\cong -I$. 

The standard representation of $H$ on $\mathbb{C}^{2}$ may be decomposed as two one-dimensional representations $\mathbb{C}v_{1}$ and $\mathbb{C}v_{-1}$ respectively. In here the $v_{1}$ and $v_{-1}$ may be realized explicitly as $v_{1}=
\left(\begin{array}{ccc}
1  \\
0 \\
\end{array}\right)$;and $v_{-1}=
\left(\begin{array}{ccc}
0 \\
1 \\
\end{array}\right)$. Suppose $V$ is a representation of $H$. Since $H$ is abelian, the diagonalization of $V$ give $V=\bigoplus V_{\alpha}$. \\
\indent As a vector space $H$ is one dimensional. Thus $H^{*}$ is generated by the element $\alpha(H)=1$. The Weyl group reflects around origin, so $-\alpha(H)=-1$ is also a weight. Hence we see $V=W_{1}^{a}\oplus W_{-1}^{b}$, with $W_{1}=\mathbb{C}v_{1}$, $W_{-1}=\mathbb{C}v_{-1}$. Compare this with the result above we conclude that $W=W_{1}^{a}$ and $W_{-1}=W_{-1}^{b}$(the notation may be confusing to the reader). Hence the generating elements for $R(H)$ are $W_{1}$ and $W_{-1}$. We now map $W_{1}$ to $t$, $W_{-1}$ to $t_{-1}$. This is obviously an additive homomorphism. We claim it is also a multiplicative homomorphism. Hence by linearity it is an isomorphism. But this is obvious. 
\end{proof}

We now prove the main theorem of this section:

\theorem $R(H)$ is a free module over $R(sl_{2}\mathbb{C})$. In other words $\mathbb{Z}[t,t^{-1}]$ is a free module over $\mathbb{Z}[t+t^{-1}]$. 
\begin{proof}
We wish to prove $\mathbb{Z}[t,t^{-1}]\cong \mathbb{Z}[t+t^{-1}]\oplus t\mathbb{Z}[t+t^{-1}]$. For $t$ and $t^{-1}$ this is simply
$$t=t(1),1\in \mathbb{Z}[t+t^{-1}];t^{-1}=[t+t^{-1}]-t$$ 

We now prove the statement via induction on the degree of $x^{i}$. It is clear that if we can prove every monomial of the form $x^{i},i\in \mathbb{Z}$ is in $\mathbb{Z}[t+t^{-1}]\oplus t\mathbb{Z}[t+t^{-1}]$, then we can show every polynomial of the form $\sum a_{i}t^{i}$ is in $\mathbb{Z}[t,t^{-1}]\cong \mathbb{Z}[t+t^{-1}]\oplus t\mathbb{Z}[t+t^{-1}]$ as well. Suppose we have $t^{i}=f_{i}(t+t^{-1})+tg_{i}(t+t^{-1})$. Then the map $$\sum a_{i}t^{i}\rightarrow \sum a_{i}f_{i}(t+t^{-1})+ta_{i}g_{i}(t+t^{-1})$$ must be an isomorphism between $\mathbb{Z}[t,t^{-1}]\cong \mathbb{Z}[t+t^{-1}]\oplus t\mathbb{Z}[t+t^{-1}]$. We now prove that $x^{i}$ can be written in the form $t^{i}=f_{i}(t+t^{-1})+tg_{i}(t+t^{-1})$ as above. 

Assume for $|i|\le k$ the statement holds. For $t^{k+1}$ and $t^{-(k+1)}$ we have:
$$t(t+t^{-1})^{k}-t^{k}=\sum C^{k}_{i}t^{i}t^{i-k}=\sum C^{k}_{i}t^{2i-k},i\le k.$$
We may now apply the induction hypothesis, which gives us a new equation of the form $t(t+t^{-1})^{k}-t^{k}=f'(t+t^{-1})+g'(t+t^{-1})$. This showed the desired relation for $t^{k}$. Now for $t^{-(k+1)}$ we have:
$$(t+t^{-1})^{k+1}-t^{-(k+1)}=\sum C^{k+1}_{i=1}t^{i}t^{-k-1+i}-t^{-(k+1)}.$$
The highest power term in here is $t^{k+1}$. But by previous analysis it can be written in basis $(1,t)$ of the ring $\mathbb{Z}(t+t^{-1})$. All the rest terms has highest order less than $k$ and we can apply the induction hypothesis to them. Thus $t^{-k-1}$ can be written in terms of the basis $(1,t)$ as well. This finished the proof. 
\end{proof}

\discussion The above proof is very opaque in the sense it lacks the geometrical picture. Geometrically the above operation amounts to 'cancel' out positive/negative weights in the expansion of $(t+t^{-1})^{i}$ by a shifting of degree 1. It is not hard to imagine this process geometrically, but it is not clear how it generalizes in high degrees. The algebraic proof 'completed' the picture in this sense. 

\section{$n=3$}
\discussion The Lie algebra corresponding to $SU(3)$ is $\mathfrak{sl}_{3}(\mathbb{C})$, the set of all traceless 3-dimensional matrices. Its maximal solvable algebra $H$ is generated by $$
h_{1}\rightarrow
\left(\begin{array}{ccc}
1 & 0 & 0\\
0 & -1& 0\\
0 & 0 &0
\end{array}\right),
h_{2}\rightarrow
\left(\begin{array}{ccc}
0 & 0 & 0\\
0 & 1& 0\\
0 & 0 &-1
\end{array}\right),h_{3}=h_{1}+h_{2}\rightarrow
\left(\begin{array}{ccc}
1 & 0 & 0\\
0 & 0& 0\\
0 & 0 &-1
\end{array}\right)$$
The rest of the generators of $\mathfrak{sl}_{3}(\mathbb{C})$ are:
$x_{1}\rightarrow
\left(\begin{array}{ccc}
0 & 1 & 0\\
0 & 0& 0\\
0 & 0 &0
\end{array}\right),
x_{2}\rightarrow
\left(\begin{array}{ccc}
0 & 0 & 1\\
0 & 0& 0\\
0 & 0 &0
\end{array}\right)\\
x_{3}\rightarrow
\left(\begin{array}{ccc}
0 & 0 &0 \\
0 & 0& 1\\
0 & 0 &0
\end{array}\right),
x_{4}\rightarrow
\left(\begin{array}{ccc}
0 & 0 & 0\\
1 & 0& 0\\
0 & 0 &0
\end{array}\right),
x_{5}\rightarrow
\left(\begin{array}{ccc}
0 & 0 & 0\\
0 & 0& 0\\
1 & 0 &0
\end{array}\right),
x_{6}\rightarrow
\left(\begin{array}{ccc}
0 & 0 & 0\\
0 & 0& 0\\
0 & 1 &0
\end{array}\right)$. 

The roots of the Cartan decomposition are linear functions on $H$ such that $V=H\bigoplus V_{\alpha}$, $[h,v_{\alpha}]=\alpha(h)v_{\alpha}$. We now consider this decomposition in detail. Any element in $H$ of the form $\left(\begin{array}{ccc}
a & 0 & 0\\
0 & b& 0\\
0 & 0 &-a-b
\end{array}\right)$ can be decomposed as $ah_{1}+(a+b)h_{2}$. Assume eigenfunctions $e_{1}(ah_{1}+bh_{2})=a$, and $e_{2}(ah_{1}+bh_{2})=b$. They are just the dual basis for $h_{1}$ and $h_{2}$. 

Computation shows the following:
$$x_{1}\rightarrow 2e_{1}-e_{2}$$ $$x_{4}\rightarrow -2e_{1}+e_{2}$$ $$x_{2}\rightarrow e_{1}+e_{2}$$ $$x_{5}\rightarrow -e_{1}-e_{2}$$ $$x_{3}\rightarrow -e_{1}+2e_{2}$$ $$x_{6}\rightarrow e_{1}-2e_{2}$$ 
As we expected the root space is generated by $2e_{1}-e_{2}$, $e_{1}+e_{2}$. Further we can see three copies of $\mathfrak{sl}_{2}\mathbb{C}$ in $\mathfrak{sl}_{3}\mathbb{C}$; $\{x_{1},x_{4},h_{1}\},\{x_{3},x_{6},h_{2}\},\{x_{2},x_{5},h_{3}\}$ spans them. Further, $[x_{4},x_{6}]=x_{5}$. 
\begin{lemma}Let $V$ be an irreducible representation for $\mathfrak{sl}_{3}\mathbb{C}$. Then a highest weight vector $v$ for $H$ exists in $V$ in the sense there is no other vector whose weight's difference with $v$'s is a difference of $2e_{1}-e_{2}$ and $e_{1}-2e_{2}$. 
\end{lemma}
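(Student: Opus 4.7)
The plan is to linearly order the weights appearing in $V$ in a way compatible with the root-shift property derived just before this lemma, then invoke finite-dimensionality of $V$ to pick a weight that is maximal in that order. First I would pick a linear functional $\ell\colon H^{*}\to \mathbb{R}$ that is strictly positive on both simple roots $\alpha_{1}=2e_{1}-e_{2}$ and $\alpha_{2}=-e_{1}+2e_{2}$; the assignment $\ell(e_{1})=3$, $\ell(e_{2})=2$ works, since then $\ell(\alpha_{1})=4$, $\ell(\alpha_{2})=1$, and $\ell(\alpha_{1}+\alpha_{2})=\ell(e_{1}+e_{2})=5$, so $\ell$ is strictly positive on \emph{every} positive root of $\mathfrak{sl}_{3}\mathbb{C}$.

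Next, since $V$ is finite-dimensional, the weight space decomposition $V=\bigoplus_{\mu}V_{\mu}$ discussed in Section~3.2.4 has only finitely many nonzero summands. Among the finitely many weights $\mu$ with $V_{\mu}\neq 0$, I would pick one, call it $\lambda_{0}$, that maximizes $\ell$. Choose any nonzero $v\in V_{\lambda_{0}}$.

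Finally, I would apply the shift formula $h(x_{\beta}v)=(\lambda_{0}+\beta)(h)\,(x_{\beta}v)$ proved in the paragraph preceding the lemma: for each of the three positive root vectors $x_{1}\in \mathcal{G}_{\alpha_{1}}$, $x_{2}\in \mathcal{G}_{\alpha_{1}+\alpha_{2}}$, $x_{3}\in \mathcal{G}_{\alpha_{2}}$, the image $x_{i}(v)$ lies in $V_{\lambda_{0}+\beta_{i}}$ for the corresponding positive root $\beta_{i}$. By construction $\ell(\lambda_{0}+\beta_{i})>\ell(\lambda_{0})$, contradicting maximality of $\lambda_{0}$ unless $V_{\lambda_{0}+\beta_{i}}=0$. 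Hence $x_{1}(v)=x_{2}(v)=x_{3}(v)=0$, so no weight vector of $V$ can be reached from $v$ by any sequence of positive-root shifts, which is precisely the highest weight condition the lemma asserts.

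The main hurdle here is really cosmetic rather than conceptual. One must verify that the chosen $\ell$ separates the origin from every positive root, not merely the two simple ones (this is why I include the third positive root $e_{1}+e_{2}$ in the check above), and one must translate the lemma's informal wording about weight \emph{differences} involving $2e_{1}-e_{2}$ and $-e_{1}+2e_{2}$ into the statement that no weight of $V$ lies strictly above $\lambda_{0}$ in the partial order generated by the positive roots. Once finite-dimensionality of $V$ is in hand, the remainder is an immediate consequence of the commutator relation $[h,x_{\beta}]=\beta(h)x_{\beta}$ that was already established in the preceding computation.
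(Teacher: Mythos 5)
Your proposal is correct and follows essentially the same route as the paper: both arguments use finite-dimensionality of $V$ to extract a weight that cannot be raised by any positive root, and then invoke the shift relation $h(x_{\beta}v)=(\lambda+\beta)(h)\,x_{\beta}v$ to conclude that the positive root vectors annihilate a vector of that weight. Your version is in fact tighter, since introducing the linear functional $\ell$ with $\ell>0$ on the positive roots makes the termination of the paper's informal ``keep shifting'' process precise and handles all three positive roots simultaneously rather than maximizing the $h_{1}$- and $h_{2}$-eigenvalues separately.
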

\begin{proof}
 We claim a highest weight vector as such exists. We may reason as follows: In case $v$ is not the highest eigenvector for $h_{2}$, we can {\bf{shift}} $v$ to higher eigenvalue by mutiplying $x_{2}$ (which shift by $2e_{1}-e_{2}$). For $h_{1}$ we may carry out a similar process. This process has to terminate at some $v'$. In case there are other weights being highest weights as well, $V$ would not be irreducible.

We also claim that $v$ must have integer eigenvalue {\bf{weight}} by the same argument we carried out in $\mathfrak{sl}_{2}\mathbb{C}$, which we shall not repeat in here. In particular $v$'s weight must be non-negative. 
\end{proof}
\begin{lemma}
Let $V$ be an irreducible representation of $\mathfrak{sl}_{3}\mathbb{C}$ and its highest weight $v$. Then $V$ is generated by the action of $\{x_{4},x_{6}\}$ on $v$. 
\end{lemma}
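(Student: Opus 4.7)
The plan is to let $W \subseteq V$ be the subspace spanned by all vectors obtained by applying words in $x_4, x_6$ to $v$. Since $[x_4, x_6]$ is a nonzero multiple of $x_5$ (from the discussion preceding this lemma), the associative algebra generated by $\{x_4, x_6\}$ contains $x_5$, and a Poincar\'e-Birkhoff-Witt reordering identifies $W$ with $\mathrm{span}\{x_4^a x_5^b x_6^c v : a, b, c \geq 0\}$. I want to show that $W$ is an $\mathfrak{sl}_3\mathbb{C}$-submodule of $V$; since $v \ne 0$ lies in $W$ and $V$ is irreducible, this forces $W = V$.

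Closure of $W$ under $x_4, x_5, x_6$ is immediate from the PBW presentation, and closure under the Cartan subalgebra $H$ is automatic because each basis vector $x_4^a x_5^b x_6^c v$ is a weight vector of weight $\lambda - a(2e_1 - e_2) - b(e_1 + e_2) - c(-e_1 + 2e_2)$, where $\lambda$ denotes the weight of $v$, so $W$ is a sum of weight spaces. The substantive step is closure under the raising operators $x_1, x_2, x_3$. I would reduce this to closure under the two simple raising operators $x_1$ and $x_3$ only, using the identity $x_2 = [x_1, x_3]$ (a direct matrix-unit computation): once $W$ is closed under $x_1$ and $x_3$, one gets $x_2 w = x_1 x_3 w - x_3 x_1 w \in W$ automatically.

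To handle $x_1$ and $x_3$ I would induct on the total degree $a + b + c$. The base case reduces to $x_1 v = x_3 v = 0$, which holds by the previous lemma: either would produce a vector of weight $\lambda + \alpha$ with $\alpha$ a positive simple root, contradicting the maximality of $v$. For the inductive step, peel off the rightmost lowering factor of the monomial and commute $x_1$ or $x_3$ past it using
\[
[x_1, x_4] = h_1, \quad [x_1, x_5] = -x_6, \quad [x_1, x_6] = 0,
\]
\[
[x_3, x_4] = 0, \quad [x_3, x_5] = x_4, \quad [x_3, x_6] = h_2,
\]
all computable directly from the matrix-unit description. The crucial observation, and the reason this particular choice of generators works so cleanly, is that each of these commutators lies in $H \cup \{x_4, x_5, x_6\}$, and in particular never reintroduces a raising operator. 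Consequently the correction term in every commutation step remains in $W$ (by closure under $H$ and the lowering operators), while the main term has strictly smaller degree and falls under the induction hypothesis. The main obstacle is therefore not conceptual but organizational: one has to bookkeep the six commutation relations together with the sign conventions so that the induction threads cleanly through an arbitrary ordering of $x_4, x_5, x_6$ produced by PBW.
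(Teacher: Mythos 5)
Your proof is correct, but it takes a genuinely different route from the paper's. The paper argues by decomposing $\mathfrak{sl}_{3}\mathbb{C}$ into its three embedded copies of $\mathfrak{sl}_{2}\mathbb{C}$ (spanned by $\{x_{1},x_{4},h_{1}\}$, $\{x_{3},x_{6},h_{2}\}$, $\{x_{2},x_{5},h_{3}\}$), recovers $V$ from the corresponding $\mathfrak{sl}_{2}$-strings through $v$, and then discards $x_{5}$ on the grounds that its weight shift is not independent of those of $x_{4}$ and $x_{6}$; the key closure property is left implicit there. You instead give the standard submodule argument (the one in Fulton--Harris and Humphreys): set $W=U(\mathfrak{n}^{-})v$, check stability under $H$ by weight considerations and under the raising operators by commuting them through words in $x_{4},x_{5},x_{6}$ with induction on length, then invoke irreducibility. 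This costs you an explicit commutator table but buys the only genuinely nontrivial point, namely that $W$ is invariant under $x_{1},x_{2},x_{3}$, which the paper never verifies. Your commutators check out, and hedging $[x_{4},x_{6}]$ as ``a nonzero multiple of $x_{5}$'' is wise, since the correct value is $-x_{5}$ rather than the $+x_{5}$ the paper asserts; the reduction of $x_{2}$ to $x_{1},x_{3}$ via $x_{2}=[x_{1},x_{3}]$ is also correct. One cosmetic slip: in expanding $x_{1}(y_{1}\cdots y_{k}v)$ you commute $x_{1}$ past the \emph{leftmost} factor $y_{1}$, not the rightmost; this does not affect the induction.
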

\begin{proof}
We claim that the action of $\{x_{4},x_{5},x_{6}\}$ on $v$ generates $V$. This is explicit since we may recover the representation of $\mathfrak{sl}_{3}\mathbb{C}$ by its three subrepresentations of $\mathfrak{sl}_{2}\mathbb{C}$, and each subrepresentation is generated by $\{x_{4},x_{6},x_{5}\}$ respectively. But $\{x_{4},x_{5},x_{6}\}$'s actions on $v$ in terms of weight lattice are not independent, we need only 2 of them. Without loss of generality we may assume they are $x_{4}$ and $x_{6}$ (since they are $h_{1}$ and $h_{2}$'s shifting vector in respective $\mathfrak{sl}_{2}\mathbb{C}$). 
\end{proof}
\begin{lemma}
$\mathfrak{sl}_{3}\C$ has Weyl group isomorphic to $S_{3}\cong D_{3}$. 
\end{lemma}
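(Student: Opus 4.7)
The plan is to read the Weyl group directly off the root system we have just computed. From the Cartan decomposition of $\mathfrak{sl}_3(\C)$ worked out above, the six roots in $H^*$ are $\pm(2e_1-e_2)$, $\pm(e_1+e_2)$, $\pm(-e_1+2e_2)$, which when drawn in the two-dimensional real span of $H^*$ form the vertices of a regular hexagon, i.e.\ the standard $A_2$ root system. The Weyl group as defined earlier is generated by the reflections $W_\alpha$ across the hyperplanes orthogonal to these roots, and there are only three such hyperplanes since $\alpha$ and $-\alpha$ define the same one.

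First I would select the simple roots $\alpha_1 = 2e_1 - e_2$ and $\alpha_2 = -e_1 + 2e_2$, so the remaining positive root is $\alpha_1+\alpha_2 = e_1+e_2$, with coroots $h_{\alpha_1}=h_1$ and $h_{\alpha_2}=h_2$. Applying the formula $W_\alpha(\beta) = \beta - \beta(h_\alpha)\alpha$, a short tabulation shows that $s_1 := W_{\alpha_1}$ and $s_2 := W_{\alpha_2}$ each permute the six roots and each have order $2$. Next I would compute the product $s_1 s_2$ on the simple roots: because $\alpha_1$ and $\alpha_2$ meet at an angle of $2\pi/3$, the corresponding hyperplanes meet at an angle of $\pi/3$, so $s_1 s_2$ acts as rotation by $2\pi/3$ and has order $3$. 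Hence the Weyl group admits the Coxeter presentation $\langle s_1,s_2 : s_1^2 = s_2^2 = (s_1 s_2)^3 = 1\rangle$, which is precisely the standard presentation of $S_3$ (and of $D_3$, giving the isomorphism claimed in the statement).

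To confirm the group has exactly six elements I would list $1,\,s_1,\,s_2,\,s_1 s_2,\,s_2 s_1,\,s_1 s_2 s_1$ and verify they induce six distinct permutations of the three unordered pairs $\{\pm\alpha_1\},\{\pm\alpha_2\},\{\pm(\alpha_1+\alpha_2)\}$, so no further collapses occur. The hard part is nothing more than careful bookkeeping of the reflections on the hexagonal root set, since the Coxeter relations are forced once the $A_2$ structure is in hand. As a sanity check independent of root-system computations, I would invoke the alternative definition $W = N(T)/T$: for $SU_3$ the maximal torus $T$ consists of the diagonal unitary matrices of determinant $1$, the normalizer $N(T)$ consists of the monomial unitary matrices in $SU_3$, and the quotient acts by permuting the three standard basis vectors of $\C^3$, recovering $S_3$ directly and matching the reflection-theoretic count.
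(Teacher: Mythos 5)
Your proposal is correct, and its computational core coincides with the paper's: both start from the six roots $\pm(2e_{1}-e_{2}),\pm(e_{1}+e_{2}),\pm(e_{1}-2e_{2})$ and apply the reflection formula $W_{\alpha}(\beta)=\beta-\beta(h_{\alpha})\alpha$. Where you diverge is in how the isomorphism type is pinned down. The paper simply tabulates the orbit of a generic weight $(m,n)$ under the reflections, obtaining the six images $(m,n),(-m,m+n),(m+n,-n),(n,-m-n),(-n,-m),(-m-n,m)$, and then asserts (leaving the verification to the reader) that this six-element group is $D_{3}$; it also remarks that $W\cong S_{3}$ is ``immediate'' because each reflection permutes indices. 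You instead extract the Coxeter presentation $\langle s_{1},s_{2}: s_{1}^{2}=s_{2}^{2}=(s_{1}s_{2})^{3}=1\rangle$ from the $2\pi/3$ angle between the simple roots, which identifies the abstract group without having to inspect all six orbit elements, and you add an independent cross-check via $N(T)/T$ for $SU_{3}$ acting by permutation of the standard basis of $\C^{3}$. Your route buys a cleaner and more conceptual identification (the order-$3$ relation does the work that the paper's explicit orbit list does), and the $N(T)/T$ check is a nice touch given that the paper elsewhere admits it does not verify that the two definitions of the Weyl group coincide; the paper's orbit tabulation, on the other hand, is reused later when it needs the explicit action of $W$ on the weight lattice, so its extra concreteness is not wasted.
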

\begin{proof}
The adjoint representation on $V$ gives us fundamental roots as $2e_{1}-e_{2}$, $e_{1}+e_{2}$, $e_{1}-2e_{2}$. They constitute a root system. The fact that $W\cong S_{3}$ is immediate if we notice any reflection is automatically a permutation. For our purpose we wish to compute the Weyl group explicitly. We have the formula $$W_{\alpha}(\beta)=\beta-\frac{2k(\beta,\alpha)}{k(\alpha,\alpha)}\alpha$$ from \cite{Fulton}. Notice $\frac{2k(\beta,\alpha)}{k(\alpha,\alpha)}=\beta(H_{\alpha})$. For $\beta=(m,n)$ in the lattice, $\alpha=\{(2,-1),(-1,2),(1,1)\}$, $h_{(2,-1)}=h_{1}, h_{(-1,2)}=h_{2}, h_{(1,1)}=h_{3}$. We have $$W_{(2,-1)}(\beta)=(-m,m+n);W_{(2,-1)}(\beta)=(m+n,-n);W_{(1,1)}(\beta)=(-n,-m).$$ Thus the six weights generated by $W_{\alpha}$ are:
$$(m,n),(-m,m+n),(m+n,-n),(n,-m-n),(-n,-m),(-m-n,m).$$ We may verify explicitly that the six element group is isomorphic to $D_{3}$. 
\end{proof}
\example Our selection of the basis is `skewed'. This is apparaent from the picture: Part of the weight diagram for $V$ with highest weight $(2,2)$ is:
\begin{center}
\includegraphics{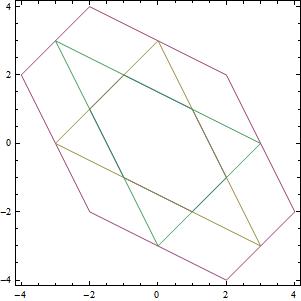}
\end{center}
\discussion We now wish to examine the first few irreducible representations we know. The standard representation of $\mathfrak{sl}_{3}\mathbb{C}$ acting on $\mathbb{C}^{3}$ is irreducibleIt is clear from the above picture that we have irreducible representation $Q_{1}$ and $Q_{2}$ with weights $(1,0),(-1,1),(0,-1)$, $(1,-1),(0,1),(1,-1)$ respectively.

\lemma Define $Q_{1}=V_{(1,0)}$ and $Q_{2}=V_{(0,1)}$, we have $V_{(0,n)}\otimes V_{(0,1)}\cong V_{(0,n+1)}\oplus V_{(1,n-1)}$. 
\begin{proof}
We claim that $V_{(0,n)}\otimes V_{(0,1)}\cong V_{(0,n+1)}\oplus V_{(1,n-1)}$. This is clear because by our definition earlier we cannot shift $(0,n)$ to `lower' weights by applying $x_{4},x_{5},x_{6}$. Thus the intergral weights for $V_{(0,n)}$ are $(0,n),(n,-n),(-n,0)$. Considering $V_{(0,1)}$ has weights $(0,1),(1,-1),(-1,0)$, adding them and counting the multiplicity give us the desired statement.
\end{proof}  
\lemma The representation ring of $\mathfrak{sl}_{3}\mathbb{C}$ is isomoprhic to $\mathbb{Z}[t_{1}+t_{1}^{-1}t_{2}+t_{2}^{-1},t_{1}t_{2}^{-1}+t_{2}+t_{1}^{-1}]$. 
\begin{proof}
We wish to show that $R(\mathcal{G})$ is generated by fundamental representations $Q_{1}$ and $Q_{2}$. This is explicit if we can show every irreducible representation with highest weight $(m,n)$ is contained in $Q_{1}^{m}\otimes Q_{2}^{n}$. I acknkowledge honestly that it is difficult to write out a general formula for the multiplicity of individual weights in the tensor product. I suspect we have the result:
$$V_{(m,n)}\otimes Q_{1}=V_{(m+1,n)}\oplus V_{(m-1,n+1)}\oplus V_{(m,n-1)},V_{(m,n)}\otimes Q_{2}=V_{m,n+1}\oplus V_{(m+1,n-1)}\oplus V_{m-1,n}$$
But it is generally difficult to prove without using elementary methods like induction. We can `bypass' this issue by claiming this $Q_{1}^{m}\otimes Q_{2}^{n}$ must contain a copy of $V_{(m,n)}$, which is evident since every individual weight space of $V_{(m,n)}$ is generated by the tensor product. Now we map $Q_{1}$ to $t_{1}+t_{1}^{-1}t_{2}+t_{2}^{-1}$, and $Q_{2}$ to $t_{1}+t_{1}^{-1}t_{2}+t_{2}^{-1}$ by mapping $t_{1}$ and $t_{2}$ to be one-dimensional representation of $\mathfrak{sl}_{3}\mathbb{C}$ with weights $(0,1)$ and $(1,0)$ respectively. 
\end{proof}

\lemma The representation ring of $H=\{h_{1},h_{2}\}$ is isomorphic to $\mathbb{Z}[t_{1},t_{1}^{-1},t_{2},t_{2}^{-1}]$. 
\begin{proof}
We know $H$ is the maximal solvable subalgebra inside of $\mathfrak{sl}_{3}\C$. Any representation $V$ of $H$ can be diagnoalized since $H$ is abelian. Thus we have $V=\bigoplus V_{\alpha}$, where $H(v)=\alpha(H)v$. 
\\\indent If $V$ is irreducible, then it serves as a generator in $R(H)$, and all $V_{\alpha}$ is one dimensional. Now notice the standard representation of $H$ on $\mathbb{C}^{3}$ can be decomposed as $\mathbb{C}e_{1}\oplus \mathbb{C}e_{2}\oplus \mathbb{C}e_{3}$, with $e_{i}$ the standard basis vectors. Consider the adjoint action $e_{i}$ to $h_{i}$, we have $e_{1}$ having eigenvalue $(1,0,1)$, $e_{2}$ having eigenvalue $(-1,1,0)$, $e_{3}$ having eigenvalue $(0,-1,1)$. Using linear combination of these vectors, it is not difficult to construct eigenvectors that have eigenvalue $1$ and $-1$ for $h_{1}$, while vanishing for $h_{2}$; and vice-versa we may construct such for $h_{2}$. We now list them as follows:
\begin{enumerate}
\item For $h_{1}$: $$h_{1}(1,0,0)=(1,0,0)$$ $$h_{1}(0,1,1)=-(0,1,1)$$ $$h_{2}(1,0,0)=h_{2}(0,1,1)=0$$
\item 
For $h_{2}$: $$h_{2}(0,0,1)=(0,0,1)$$ $$h_{2}(1,1,0)=-(1,1,0)$$ $$h_{1}(0,0,1)=h_{1}(1,1,0)=0$$
\end{enumerate} 
If we name them to be $V_{(1,0)}$, $V_{(-1,0)}$, $V_{(0,1)}$, $V_{(0,-1)}$, then we claim $R(H)$ is in fact generated by the four elements. Take consideration that the weight of the tensor product is the sum of the individual weights, this is clear since each irreducible representation is generated by the $V_{(i,j)}$, and the irreducible elements generated the representation ring. Thus $R(H)\cong \mathbb{Z}[t_{1},t_{1}^{-1},t_{2},t_{2}^{-1}]$. 
\end{proof}

\theorem  $R(H)$ is a free module over $R(\mathfrak{sl}_{3}\mathbb{C})$ with basis $$\{1, t_{1},t_{2},t_{1}t_{2}^{-1},t_{1}^{-1}t_{2}, t_{1}t_{2}\}$$
\begin{proof}
This amounts to prove that $R_{1}=\mathbb{Z}[t_{1},t_{1}^{-1},t_{2},t_{2}^{-1}]$ is a free module over $R_{2}=\mathbb{Z}[t_{1}+t_{1}^{-1}t_{2}+t_{2}^{-1},t_{1}t_{2}^{-1}+t_{2}+t_{1}^{-1}]$. We wish to prove this statement by finding an explicit basis. I now claim:
$$R_{1}\cong 1*R_{2}\oplus t_{1}R_{2}\oplus t_{2}R_{2}\oplus t_{1}t_{2}^{-1}R_{2}\oplus t_{1}^{-1}t_{2}R_{2}\oplus t_{1}t_{2}R_{2}$$

We shall only briefly capitulate the proof resembling the case of $n=2$ here (because it is extremely messy) and construct a much cleaner proof in a more canoical basis in the section below. The main idea of the messy proof is to use induction on $k$ of monomials of the form $t_{1}^{i}t_{2}^{j}$ with $|i+j|\le k, \text{max}(|i|,|j|)\le 2k$. Assuming for $k\le K$ this works, we may prove the case for $k=K+1$ by constructing $t_{1}^{i}t_{2}^{j}$ by $(t_{1}+t_{1}^{-1}t_{2}+t_{2}^{-1})^{i}\cdot (t_{1}+t_{1}^{-1}t_{2}+t_{2}^{-1})^{j}$ and canceling of the lower degree monomials using the induction hypothesis and multiplication of basis elements in the same way we did for the $n=2$ case. For example for $i=2,j=2$ using symbols $x=t_{1},y=t_{2}$ we have:

\begin{align*}
(x+y^{-1}+x^{-1}y)^{2}(xy^{-1}+y+x^{-1})^{2}=
15+\frac{2}{x^3}+2 x^3+\frac{x^2}{y^4}+\frac{2}{y^3}+\frac{2 x^3}{y^3}+{\bf{\frac{1}{x^2 y^2}}}+\frac{8 x}{y^2}\\
+\frac{x^4}{y^2}+\frac{8}{x y}+\frac{8 x^2}{y}+\frac{8 y}{x^2}+8 x y+\frac{y^2}{x^4}+\frac{8 y^2}{x}+x^2 y^2+2 y^3+\frac{2 y^3}{x^3}+\frac{y^4}{x^2}
\end{align*}

It is obvious that the other terms all have lower degree so we can apply the induction hypothesis. We comment that in general it is $\bf{not}$ desirable to use such method to prove $R(\mathfrak{sl}_{n}\mathbb{C})$ is a free module over $R(H)$ because of two reasons: one is the selection of a basis of $R(H)$ in terms of $R(\mathfrak{sl}_{3}\C)$ is usually non-canonical and may be arbitrary unless we introduce some (e.g, monomial) ordering of the elements; the other is the above construction lacks the geometric picture of the action of Weyl group on the Lie group and its maximal torus, hence could not provide much desirable information of the structure of the representation ring. 
\end{proof}
\section{$n=3$ and the general approach}
\discussion In this section we follow Serre's treatment in \cite[Chapter VII]{Serre}, Much of the material can also be found in \cite{Fulton}, Chapter 23 and preceding chapter discussing representation of $\mathfrak{sl}_{n}$, the author tried to follow their notation. It is not clear to me how much is known to experts (thus I cannot give an accurate bibliography) despite a reference request in Mathoverflow. The author is grateful for David Speyer's explicit answer, which fully characterizes the algebraic nature of this problem. 
\remark In the previous case we used the `skewed' basis of $H$ as $h_{3}=h_{1}+h_{2}$ (which we ignored). If we view $SU_{n}$ as part of the group $U_{n}$ whose maximal torus having $(z_{1},...z_{n})$ on the diagonal, we may attempt to find a more canonical basis. Indeed this is the approach adopted in \cite{Bourbaki} and \cite{Serre}. We now give a slightly different proof of $n=3$ case and attempt to generalize the situation to any finite dimension $sl_{n}$. It is not clear at the time of writing if this proof would be original.  
\definition $H_{i}$ is the diagonal matrix with 1 on $(i,i)$th position and 0 elsewhere. It is obvious that $H_{i}$ does not belong to $h$ but pairwise difference $H_{i}-H_{j}$ is. Its matrix multiplication takes $e_{i}$ (n-vector having $i$th entry to be 1, the rest are 0) to itself and kills $e_{j}$ for $j\not=i$.
\remark The Cartan subalgebra is equal to $$h=\{a_{1}H_{1}+a_{2}H_{2}....+a_{n}H_{n};a_{1}+a_{2}+...a_{n}=0\}$$
\definition Define $L_{i}\in h^{*}$ to be $L_{i}(H_{i})=\delta_{i,j}$. Then $h^{*}=\mathbb{C}\{L_{1},L_{2}...,L_{n}\}/(L_{1}+L_{2}+...+L_{n}=0)$. 
\remark If $E_{(i,j)}$ is the matrix having $1$ at $(i,j)$ and $0$ in the rest entries, then we can view it as the endomorphism of $\mathbb{C}^{n}$ carrying $e_{j}$ to $e_{i}$ and killing $e_{k}$if $k\not=j$. We then have $$ad(a_{1}H_{1}+a_{2}H_{2}....+a_{n}H_{n})E_{i,j}=(a_{i}-a_{j})E_{(i,j)}$$ In other words, $E_{i,j}$ is the eigenvector with eigenvalue $L_{i}-L_{j}$. As a consequence the roots of $sl_{n}$ are just pair $L_{i}-L_{j}$. 
\\\indent The following discussion is not directly related to our final proof:
\lemma The Killing form on $sl_{n}$ is given by $$k\left(\sum a_{i}H_{i},\sum b_{i}H_{i}\right)=2n\sum a_{i}b_{i}$$  and $$k\left(\sum a_{i}L_{i},\sum b_{i}L_{i}\right)=\frac{1}{2n}\left(\sum_{i}a_{i}b_{i}-\frac{1}{n}\sum_{i,j}a_{i}b_{j}\right)$$ 

\begin{proof}
We follow the proof given by \cite{Fulton}. We note the automorphism $\phi$ of $C^{n}$ interchanging $e_{i}$ and $e_{j}$ and fixing $e_{k}$ for all $k\not=i,j$ induces an automorphism $Ad(\phi)$ of the Lie algebra $sl_{n}\mathbb{C}$ that carries $h$ to itself and exchanging $H_{i}$ and $H_{j}$. The Killing form on $h$ must be invariant under all such automorphisms, hence we must have $k(L_{i},L_{j})=k(L_{j},L_{i}),k(L_{i},L_{k})=k(L_{j},L_{k})$ for all $i,j,k\not=i,j$. Thus the Killing form on $h^{*}$ must be a linear combination of the form $\sum a_{i}b_{i}$ and $\sum_{j\not=i}a_{i}b_{j}$. Considering that $k(\sum a_{i}L_{i},\sum b_{i} L_{i})=0$ whenever all $a_{i}=a_{j}$ or all $b_{i}=b_{j}$, the Killing form is a multiple of $$\sum_{i}a_{i}b_{i}-\frac{1}{n}\sum_{i,j}a_{i}b_{j}$$

To calculate $k(\sum a_{i}H_{i},\sum b_{i}H_{i})$ we argue as follows: if $x,y\in h$ and $Z_{\alpha}$ generates $g_{\alpha}$, then $ad(x)ad(y)(Z_{\alpha})=\alpha(x)\alpha(y)Z_{\alpha}$. So $k(x,y)=\sum_{\alpha\in \Phi} \alpha(x)\alpha(y)$. For $\mathfrak{sl}_{n}$ the roots are pairwise differences $L_{i}-L_{j}$. Thus we have $$k\left(\sum a_{i}H_{i},\sum b_{i}H_{i}\right)=\sum_{i\not=j}\left(a_{i}-a_{j}\right)(b_{i}-b_{j})=\sum_{i}\sum_{j\not=i}\left(a_{i}b_{i}+a_{j}b_{j}-a_{i}b_{j}-a_{j}b_{i}\right)$$
This can be further simplified by noting $\sum a_{i}=0,\sum b_{i}=0$ to $$k\left(\sum a_{i}h_{i}\sum b_{i}h_{i}\right)=2n\sum a_{i}b_{i}.$$ Now note that we have $k(\alpha,\beta)=k(t_{\alpha},t_{\beta}),t_{\alpha}=\frac{2h_{\alpha}}{k(h_{\alpha},h_{\alpha})}$. Thus $$k\left(\sum a_{i}L_{i},\sum b_{i}L_{i}\right)=\frac{1}{2n}[\sum_{i}a_{i}b_{i}-\frac{1}{n}\sum_{i,j}a_{i}b_{j}].$$
We may use the Killing form the calculate the Weyl group explicitly, but there is a more direct computation:
\end{proof}

\lemma The {\bf{Weyl group}} of $sl_{n}$ is isomorphic to $S_{n}$. 
\begin{proof}
The Weyl group is generated by the reflections of the form $W_{\alpha}$, $\alpha\in h$. Now we have $W_{\alpha}(\beta)=\beta-\frac{2k(\beta,\alpha)}{k(\alpha,\alpha)}\alpha$. Thus assume $\beta=\sum a_{i}L_{i},\alpha=L_{j}-L_{k}$, then we have $W_{\alpha}(\beta)=\sum a_{i}L_{i}-\beta(H_{j}-H_{k})(L_{j}-L_{k})=\sum_{i\not=j,k} a_{i}L_{i}+a_{j}L_{k}+a_{k}L_{j}$. This showed its action is just a transposition between $\alpha$ and $\beta$ while fixing the other elements. Thus it must be isomorphic to $S_{n}$. 
\end{proof}

\lemma We claim that $R(h)\cong \mathbb{Z}(x_{1},x_{2}....,x_{n})|\prod x_{i}=1\cong \mathbb{Z}(x_{1},x_{1}^{-1}...x_{n},x_{n}^{-1})$. 
\begin{proof}
This is obvious is we translate the condition of operations of representation rings with their roots, we have $\sum L_{i}=0\rightarrow \prod x_{i}=1$. 
\end{proof}

\lemma $R(\mathfrak{sl}_{n})\cong \mathbb{Z}[\sum x_{i},\sum_{i\le j} x_{i}x_{j},\sum_{i\le j\le k}x_{i}x_{j}x_{k}...]$. In other words the representation ring of the group is a polynomial ring in the symmetric polynomials. 
\begin{proof}
In general we assert that if $R(G)$ is reducible (in here we know because of Peter-Weyl theorem, in general we only need to know $G$ is compact), assume the characters of $T$ form a lattice $X$, then $R(T)\cong \mathbb{Z}[X]$, $R(G)\cong \Z[X]^{W}$ (the subring of $\Z[X]$ that is invariant under the action of $W$). But we already proved this in the introduction section; thus we only need to verify that the symmetric polynomials are invariant under the action of the Weyl group, which is trivial. 
\end{proof}

\theorem $R(h)$ is a free module over $R(\mathfrak{sl}_{3})$.
\begin{proof}
We \footnotetext{We proceed with a proof suggested by Prof. James Belk.} identify $$A=\Z[x+y+z,xy+yz+zx]/(xyz-1).$$ $$B=\Z[x,y,z]/(xyz-1).$$ In general we have a sequence of module extensions:
$$A\subset A[x]\subset A[x,y]\subset A[x,y,z]=B$$
If we can show that each extension in the sequence is a free module over the previous one, then we can assert $B$ must be a free module over $A$ since then for $b\in B$, we would have $b=\sum b_{i}A_{i},A_{i}\in A[x,y]$, and we can decompose $A_{i}$ even further by using $A_{j}\in A[x]$ and $A_{k}\in A$. 

We now consider an auxillary polynomial $$P(t)=t^{3}-(x+y+z)t^{2}+(xy+yz+xz)t-1$$ Obviously $P(t)=\prod (t-x_{i})$. Since $x$ is a root of $P(t)$, we found $$x^{3}=(x+y+z)x^{2}-(xy+yz+xz)x+1$$ Thus $A[x]$ is a free over $A$ by the basis $[1,x,x^{2}]$, since for any element of the form $ax^{k},s\in A$ we may decompose $x^{k}$ into $x^{k}=\sum^{2}_{i=0} a_{i}x^{i},a_{i}\in A$. Thus $ax^{k}=\sum^{2}_{i=0} (aa_{i})x^{i}$. 

We still need to prove $A[x,y]$ is free over $A[x]$. We consider the auxillary polynomial ring $A^{*}=\Z[y+z,yz]$. Then we have $y+z=x+y+z-(x),yz=xy+yz+zx-x(y+z)$. It then follows $A[x]=A^{*}[x]$. Now since $y$ satisfies the minimal polynomial $t^{2}-t(y+z)+yz=0$ with coefficients in $A^{*}[x]$, thus $A^{*}[x,y]=A[x,y]$ is free over $A^{*}[x]=A[x]$ with basis $(1,y)$. And the decomposition of monomial involving $y$ in $A[x,y]$ similarly follows. 

In the last step, we do not need to adjoin $z$ as $z=x+y+z-(x+y)$\footnotetext{Thus the basis we found for $\mathfrak{sl}_{3}$ coincide with the one provided by {\bf{David Speyer}}}.  $1,x,y,xy,x^{2},x^{2}y$, as it is the product of the basis in the intermediate extensions. Steinberg has proved in his paper that $R(T)$ is of $|W|=6$ over $R(G)$. Thus they indeed span $R(T)$ as an $R(G)$ module. 
\end{proof} 

\remark Using the same idea it is not difficult to construct a similar basis for the general $R(\mathfrak{sl}_{n})$:
\theorem $R(h)$ is a free module over $R(\mathfrak{sl}_{n})$. A standard basis is $\{\prod x_{i}^{a_{i}},0\le a_{i}\le n-i\}$.
\begin{proof}
In the general situation $$R(h)=\Z[x_{1},x_{2}..,x_{n}]/\left(\prod x_{i}-1\right).$$ $$R(G)=\Z[\sum x_{i},\sum_{i<j} x_{i}x_{j}..]/\left(\prod x_{i}-1\right).$$
Define $s_{i}$ to be the $i$th symmetric polynomial. Than analgous to above we may define $R(\mathfrak{sl}_{n})=A=[s_{1},s_{2}...s_{n}]$, and we obtain an analgous sequence: $$A\subset A[x_{1}]\subset A[x_{1},x_{2}]\subset A[x_{1},x_{2},x_{3}]...\subset A[x_{1}...x_{n-1}]=\Z[x_{1},x_{2}..,x_{n}]=R(h)$$ 
We may construct the same formal polynomial $$P(t)=t^{n}-(\sum x_{i})t^{n-1}+(\sum x_{i}x_{j})t^{n-2}....\pm 1=\prod (t-x_{i})$$
In here invert the equation we have $x_{1}^{n}=(\sum x_{i})x_{1}^{n-1}-(\sum x_{i}x_{j})x_{1}^{n-2}...\pm1$. Thus any monomial element $ax_{1}^{i},a\in A$ in $A[x_{1}]$ can be written in a unique way as $ax_{1}^{i}=\sum a^{n-1}_{j=0}a_{j}x_{1}^{j},a,a_{j}\in A$. 

We now proceed with induction assuming for $A[x_{1},x_{2}, \ldots, x_{k}]$ we have verified that $$A[x_{1}]\subset A[x_{1},x_{2},x_{3}],\ldots\subset A[x_{1}..x_{k}]$$ is a sequence of module extensions such that the right handed one is always a free module over its left handed one. Thus we need to prove $A[x_{1}..x_{k+1}]$ is free over $A[x_{1}..x_{k}]$. We now define the analgous $A_{k}^{*}$ to be $\Z[t_{1}..t_{n-k}]$, where $t_{i}$ is the $i$th symmetric polynomial in $x_{k+1},x_{k+2}..x_{n}$. We may factor out $\prod^{k}_{i=1} (t-x_{i})$ from $P(t)$ to obain a new polynomial $Q(s)=s^{n-k}-t_{1}s^{n-k-1}...+t_{n-k-1}s\pm t_{n-k}$. Since $x_{k+1}$ satisfy this polynomial, $A_{k}^{*}[x_{k+1}]$ is a rank $n-k$ free module over $A_{k}^{*}$. Further, since $\{x_{i},i\in \{1,..k\}\}$ are not solutions to this polynomial, this is also the minimal polynomial for $x_{k+1}$ over $A_{k}^{*}[x_{1},x_{2}...x_{k}]$. In other words $\{1,x_{k+1},...x_{k+1}^{n-k}\}$ is a basis for $A_{k}^{*}[x_{1},x_{2}...x_{k},x_{k+1}]$ over $A_{k}^{*}[x_{1},x_{2}...x_{k}]$. 

We now try to prove that $A_{k}^{*}[x_{k}]\cong A_{k-1}^{*}[x_{k}]$. We mark the symmetric polynomials in $A_{k-1}^{*}$ as $T_{i}$, and the symmetric polynomials in $A_{k}^{*}$ as $t_{i}$. Then we have $T_{i}=t_{i}+x_{k}t_{i-1}$. Thus $A_{k}^{*}[x_{k}]\cong A_{k-1}^{*}[x_{k}]$. Thus we may adjoin $\{x_{1},...x_{k}\}$ one by one to $A_{k}^{*}$ to obtain $A_{k}^{*}[x_{1},x_{2}...x_{k}]\cong A[x_{1},..x_{k}]$. Thus $A_{k}^{*}[x_{1},x_{2},...x_{k+1}]\cong A[x_{1},x_{2},..x_{k},x_{k+1}]$. Use the result we proved in previous passage, we obtain $A[x_{1},x_{2},..x_{k},x_{k+1}]$ is a free $A[x_{1},x_{2},..x_{k}]$ module with basis $\{1,x_{k+1},...x_{k+1}^{n-k}\}$.

We can thus obtain a general basis as any of the combinations of $\prod x_{i}^{a_{i}},0\le a_{i}\le n-i$. Its order, as we expected, is $n$!. 
\end{proof}

\section{Index theory methods}
\discussion In this section we go back to index theory of elliptic operators. 
\definition We begin this section by introducing a symmetric non-degenerate inner product from equivariant K-theory on $G/T$: 
$$\langle M_{1},M_{2}\rangle=(-1)^{sgn(w)}V_{w(M_{1}M_{2}),}-\rho, w\in W$$

In here $\rho$ is the 'sum of fundamental weights', equal to $\sum \lambda_{i}$. The value of the inner product is defined as follows: Consider the Weyl group action that move it to the positive Weyl chamber. If as a result $(-1)^{sgn(w)}V_{w(M_{1}M_{2}),}-\rho, w\in W=0$, then $\langle M_{1},M_{2}\rangle=1$; If instead $\langle M_{1},M_{2}\rangle$ onto the boundary, then $\langle M_{1},M_{2}\rangle=0$. Otherwise (neither on boundary nor on the smallest positive root), the inner product gives us a representation in $R(G)$as $V_{w(M_{1}M_{2})}$. Thus this is an inner product $R(T)\times R(T)\rightarrow R(G)$. \\
\indent My advisor Gregory Landweber has proved in his published paper \cite{Greg} that this is indeed non-degenerate on $R(T)\times R(T)$. Further reference may be found in \cite{Merkurjev}. Thus if we can find a basis whose matrix $e_{ij}=\langle M_{i},M_{j}\rangle$ under this basis is unimodular(having determinant 1 and integral coefficients), then the basis elements must be linearly independent, and checking their rank would suffice to let us prove $R(T)$ is a free module over $R(G)$ with that basis. Of course, we need to show that this inner product is well-defined on $R(T)$ as a module over $R(G)$ before we can make the above assertions. 

A similar strategy is to construct a dual element of every basis element via some linear combinations of the monomials. This turned out to be quite difficult, however. 

\begin{example} We list the matrix for the basis $\{1,x,y,xy,x^{2},x^{2}y\}$, first we list the product of basis elements: 
$$\begin{bmatrix}
M_{1}M_{2} & 1 & x & y & xy & x^{2} & x^{2}y \\[6.5pt]
1&   1& x &  y& xy & x^{2} & x^{2}y \\[6.5pt]
x&  x &  x^{2} & xy & x^{2}y & x^{3} & x^{3}y \\[6.5pt]
y & y &  xy     &  y^{2} & xy^{2} & x^{2}y & x^{2}y^{2}\\[6.5pt]
xy& xy &  x^{2}y     &  xy^{2} & x^{2}y^{2} & x^{3}y & x^{3}y^{2} \\[6.5pt]
x^{2} & x^{2} &  x^{3} & x^{2}y & x^{3}y & x^{4} & x^{4}y \\[6.5pt]
x^{2}y & x^{2}y & x^{3}y &x^{2}y^{2} &x^{3}y^{2}&x^{4}y&x^{4}y^{2}
\end{bmatrix}$$

The monomials $x^{i},y^{j}, (xy)^{k}$ are all on the Weyl boundary, which is the elements in the weight lattice invariant under the Weyl group action, thus they vanish. Further $x^{2}y=\rho$ implies $\langle 1, x^{y}\rangle=1$. 

Thus under the symmetric inner product $$(-1)^{sgn(w)}V_{w(M_{1}M_{2}),}-\rho, w\in W$$ the inner product matrix become 
$$\begin{bmatrix}
 \langle M_{1}, M_{2}\rangle& 1 & x & y & xy & x^{2} & x^{2}y \\[6.5pt]
1&   0& 0&  0& 0 & 0 & 1 \\[6.5pt]
x&  0 &  0 & 0 & 1 & 0 & x^{3}y \\[6.5pt]
y & 0 &  0     &  0 & -1 & 1 & 0\\[6.5pt]
xy& 0 &  1     &  -1 & 0 & x^{3}y & x^{3}y^{2} \\[6.5pt]
x^{2} &0 &  0 & 1 & x^{3}y & 0 & x^{4}y \\[6.5pt]
x^{2}y & 1 & x^{3}y & 0 &x^{3}y^{2}&x^{4}y&x^{4}y^{2}
\end{bmatrix}$$ It is unimodular because after computation it has determinant 1. 
\remark We conjecture that Steinberg's basis is orthonormal under the index theory inner product, but we have no means to prove that at this stage. My advisor has verified in $\mathfrak{sl}_{4}$ case Steinberg's basis is indeed orthonormal. 

\end{example}

\chapter{Work for other Lie groups}
\section{Representation for $so_{2n}$}
\section{n=1}
When $n=1$, $so_{2}$ are all matrices of the form 
$\left(\begin{array}{cc}
a & 0\\
0 & a^{-1}
\end{array}\right)$ since $M$ has to satisfy $\det(M)=1,M^{t}Q+Q M=0$. In here $Q=\left(\begin{array}{cc}
0 & 1\\
1 & 0
\end{array}\right)$. Consider $M=\left(\begin{array}{cc}
a & b\\
c & d
\end{array}\right)$, this gives us $ad-bc=1$, $(c,b)+(b,c)=0$. Thus $b=-c=0$, $ad=1$. In this case $so_{2}$ itself is abelian isomorphic to $\C^{*}$ (non-zero complex numbers). 

The representation ring $R(so_{2})$ therefore is isomorphic to $R(h)$. Any representation of $so_{2}$ is just homomorphism from $\C^{*}$ to $\C^{n}$, thus together they are isomorphic to $\C^{*}\otimes \C^{n}=\C^{n}$. It follows $R(so_{2})\cong \Z$. Therefore it follows trivially $R(\mathfrak{so}_{2})$ is a free module over $R(H)$ since they are the same. 
\section{n=2}
When $n=2$, $so_{4}$ are all rotations in the 3-dimensional sphere $\mathbb{S}^{3}$. We assert that $so_{4}\cong \mathfrak{sl}_{2}\times \mathfrak{sl}_{2}$. This can be done by the classical isomorphism identifying the unit quarterions in $\mathbb{H}$ to be isomorphic to $\mathfrak{sl}_{2}$, and consider the action $(q_{1},q_{2}),q_{i}\in \mathbb{H}$ on $p\in \mathbb{S}^{3}$ by $q_{1}pq_{2}^{-1}$. This provides an isomorphism from $\mathfrak{sl}_{2}\times \mathfrak{sl}_{2}\rightarrow so_{4}$ because every rotation in $\mathbb{S}^{3}$ is uniquely determined by $q_{1},p,q_{2}$, and though we have a $2\rightarrow 1$ map at the Lie group level, at the Lie algebra level the tangent space just coincide. 

From our analysis of $\mathfrak{sl}_{2}$ we know its representation ring is isomorphic to $\Z[x+x^{-1}]$, and the represenation ring of its maximal cartan subalgebra is isomorphic to $\Z[x,x^{-1}]$. Thus the representation ring of $so_{4}$ is isomorphic to $\Z[x+x^{-1}]\otimes \Z[y+y^{-1}]\cong \Z[x+x^{-1},y+y^{-1}]$ and $R(h)\cong \Z[x,x^{-1}]\otimes \Z[y,y^{-1}]=\Z[x,x^{-1},y,y^{-1}]$. 

\theorem $\Z[x,x^{-1},y,y^{-1}]$ is a free module over $\Z[y+y^{-1}]\cong \Z[x+x^{-1},y+y^{-1}]$.
\begin{proof}
We notice the extension of modules as follows:
$$\Z[x+x^{-1},y+y^{-1}]\subset \Z[x+x^{-1},y+y^{-1}][x]\subset \Z[x,x^{-1},y,y^{-1}]$$
We know $\Z[x+x^{-1},y+y^{-1}][x]$ is a free module over $\Z[x+x^{-1},y+y^{-1}]$ with basis $(1,x)$ because $\Z[x+x^{-1},y+y^{-1}][x]=\Z[x,x^{-1},y+y^{-1}]$. Assume every monomial of the form $x^{i},i\in \Z$ can be written as $1*(x+x^{-1})^{a_{i}}+x(x+x^{-1})^{b_{i}}$ (we proved this in the $\mathfrak{sl}_{2}$ section, it also comes straightforward from the relation $x^{2}=x(x+x^{-1})-1$), then every monomial of the form $x^{i}(y+y^{-1})^{j},i,j\in Z$ would decompose into $1*(x+x^{-1})^{a_{i}}(y+y^{-1})^{j}+x(x+x^{-1})^{b_{i}}(y+y^{-1})^{j}$. This gives the desired decomposition. 

We also know $\Z[x,x^{-1},y,y^{-1}]$ is free over $\Z[x,x^{-1},y+y^{-1}]$ by an analgous argument, and the basis is $\{1,y\}$. Thus the general basis is $\{1,x,y,xy\}$.
\end{proof}

\discussion The above proof is purely algebraic and non-constructive. It is not clear how to decompose $x^{i}y^{j}$ into $(x+x^{-1})$ and $(y+y^{-1})$ by the four basis elements. Geometrically, the two copies of $\mathfrak{sl}_{2}$ sits in the diagonal line of a square, and it is clear that the four basic directions are just $1,x,y,xy$. 

\section{n=3}
We do not see a similar phenomenon in $so_{6}$ right away, thus we need to re-visit the roots, weights, weyl group, etc. In the end we shall find $R(so_{6})\cong R(sl_{4})$ in a subtle way. The following discussion more or less comes from Fulton and Harris:

\begin{lemma}
The representation ring of the cartan subalgebra of $so_{6}$ is isomorphic to $\Z[x,y,z,x^{-1},y^{-1},z^{-1}]$.
\end{lemma}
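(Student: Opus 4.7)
The plan is to mirror the strategy used earlier in the $sl_2$, $sl_3$, and $sl_n$ cases (Lemmas 4.3.4, 4.4.6, etc.), where $R(h)$ was identified as the group algebra of the weight lattice.

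First, I would fix an explicit basis for the Cartan subalgebra. After complexification we may conjugate $so_6$ so that $h$ consists of diagonal matrices of the form $\mathrm{diag}(a_1,a_2,a_3,-a_1,-a_2,-a_3)$. This makes $h$ abelian of dimension $3$, with basis $H_1,H_2,H_3$ where $H_i$ has a $1$ in the $i$-th diagonal slot and $-1$ in the $(i+3)$-th slot. I would then introduce the dual functionals $L_i \in h^*$ defined by $L_i(H_j)=\delta_{ij}$, so that $h^* \cong \mathbb{C}L_1 \oplus \mathbb{C}L_2 \oplus \mathbb{C}L_3$.

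Next, since $h$ is abelian, any finite-dimensional representation $V$ of $h$ can be simultaneously diagonalized, giving a weight-space decomposition $V = \bigoplus_\alpha V_\alpha$ with $\alpha \in h^*$ and $H(v)=\alpha(H)v$ for all $v \in V_\alpha$. Exactly as in the $sl_2$ and $sl_3$ arguments, I would argue that the irreducible representations of $h$ are one-dimensional and are labeled by integral weights $\alpha \in \Lambda$, where $\Lambda$ is the weight lattice. The key point is that $\Lambda$ is a free abelian group of rank $3$ with basis $L_1,L_2,L_3$ (viewing $so_6$ through its simply-connected cover so that the fundamental representations can be extracted, exactly as was implicit in the $so_4$ lemma where each $sl_2$ factor contributed a Laurent generator).

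Finally, I would write $x = e^{L_1}$, $y = e^{L_2}$, $z = e^{L_3}$ and observe that tensoring weight spaces adds weights: $V_\alpha \otimes V_\beta = V_{\alpha+\beta}$. This promotes the bijection between irreducible representations and elements of $\Lambda$ to a ring isomorphism
\[
R(h) \;\cong\; \mathbb{Z}[\Lambda] \;\cong\; \mathbb{Z}[\mathbb{Z}^3] \;\cong\; \mathbb{Z}[x,x^{-1},y,y^{-1},z,z^{-1}],
\]
which is the claim. The only delicate point, and the one I expect to be the main obstacle, is pinning down the precise weight lattice: naively taking the $L_i$ basis gives only the lattice for $SO_6$ itself, whereas the claim implicitly uses the full lattice of $\mathrm{Spin}(6) \cong SU(4)$ so that $R(h)$ is isomorphic to $R(h_{sl_4})$ in the Laurent generators above. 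This matches the identification $R(so_6) \cong R(sl_4)$ that the subsequent section will need, and the justification should proceed by exhibiting the half-spin weights as an integral change of basis of $L_1,L_2,L_3$, after which the lattice is still free of rank $3$.
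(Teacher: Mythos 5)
Your proposal is correct and follows essentially the same route as the paper: both identify the Cartan subalgebra as the rank-$3$ abelian algebra of block-diagonal matrices with $A+D^{t}=0$, i.e.\ $\mathrm{diag}(a_1,a_2,a_3,-a_1,-a_2,-a_3)$, and conclude that the representation ring is the group ring of a rank-$3$ lattice, namely $\Z[x,x^{-1},y,y^{-1},z,z^{-1}]$. Your closing caveat about the $SO(6)$ versus $\mathrm{Spin}(6)$ weight lattices is well taken --- the paper itself later enlarges this ring to include $(xyz)^{1/2}$ to accommodate the half-spin weights --- but since either lattice is free abelian of rank $3$, the ambiguity does not affect the abstract isomorphism type claimed in this particular lemma.
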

\begin{proof}
This follows from the matrix decomposition of $M$ into the form $
\left(\begin{array}{cc}
A & 0\\
0 & D
\end{array}\right)$, and a short computation showed $A^{t}+D=D^{t}+A=0$. Therefore $M$ is essentially determined by the three diagonal elements in $A$. The rest follows since the abelian part of $SO(3)$ would be a torus isomorphic to $\mathbb{S}^{1}\times \mathbb{S}^{1}\times \mathbb{S}^{1}$. 
\end{proof}
As a result we have:
\begin{definition}
Define $E_{i,j}$ be the matrix with $1$ in $\{i,j\}$ position and 0 everywhere else as usual. The cartan subalgebra of $so_{6}$ is generated by $H_{1},H_{2},H_{3}$. $H_{i}=E_{i,i}-E_{3+i,3+i}$. $H_{i}$'s action on $\C^{6}$ is to fix $e_{i}$, send $e_{3+i}$ to its negative, and kill all the remaining vectors. We define the basis for the dual vector space $h^{*}$ by $L_{j}$ where $\langle L_{j},H_{i}\rangle=\delta_{i,j}$.
\end{definition} 
\remark This can be extended to the general $so_{2n}$ case. 

\begin{lemma}
$X_{i,j}=E_{i,j}-E_{n+j,n+i}$ is an eigenvector for the adjoint action of $h$ with eigenvalue $L_{i}-L_{j}$
\end{lemma}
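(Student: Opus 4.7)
The plan is a direct matrix computation. Write an arbitrary element of $h$ as $H = \sum_{k=1}^{n} a_k H_k$; by linearity of the bracket, it suffices to compute $[H_k, X_{i,j}]$ for each $k$ and show it equals $(\delta_{k,i} - \delta_{k,j})\, X_{i,j}$, which under the pairing $\langle L_j, H_i \rangle = \delta_{ij}$ is exactly $(L_i - L_j)(H_k)\, X_{i,j}$.

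The only tool needed is the matrix-unit identity $E_{a,b} E_{c,d} = \delta_{b,c}\, E_{a,d}$, from which $[E_{a,a}, E_{c,d}] = (\delta_{a,c} - \delta_{a,d})\, E_{c,d}$. I would expand $H_k = E_{k,k} - E_{n+k,\,n+k}$ and apply this identity to each of the four bracket terms
\[
[E_{k,k},\, E_{i,j}],\quad [E_{k,k},\, E_{n+j,\,n+i}],\quad [E_{n+k,\,n+k},\, E_{i,j}],\quad [E_{n+k,\,n+k},\, E_{n+j,\,n+i}],
\]
where $i,j \in \{1,\ldots,n\}$. Many Kronecker deltas vanish automatically (for instance $\delta_{n+k,\,i}=0$ since $n+k > n \ge i$), and what survives collapses to $(\delta_{k,i}-\delta_{k,j})\, E_{i,j}$ from the first summand and $(\delta_{k,i}-\delta_{k,j})\, E_{n+j,\,n+i}$ from the second. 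Subtracting gives $(\delta_{k,i}-\delta_{k,j})\, X_{i,j}$, as required.

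There is no serious obstacle; the argument is mechanical. The one place to stay alert is the lower-right block: the summand $E_{n+j,\,n+i}$ has $j$ and $i$ transposed relative to $E_{i,j}$, so naively $[E_{n+k,\,n+k}, E_{n+j,\,n+i}] = (\delta_{k,j}-\delta_{k,i})\, E_{n+j,\,n+i}$ carries the \emph{opposite} sign pattern from $[E_{k,k}, E_{i,j}]$. The minus sign inside $H_k = E_{k,k} - E_{n+k,\,n+k}$ is precisely what restores the common factor $(\delta_{k,i}-\delta_{k,j})$ across both halves of $X_{i,j}$, so that $X_{i,j}$ is genuinely a simultaneous eigenvector rather than a sum of two pieces with different weights. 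Once this bookkeeping is tracked correctly the claim is immediate.
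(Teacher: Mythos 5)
Your proposal is correct and follows essentially the same route as the paper: a direct bracket computation with the matrix units, checking that both summands $E_{i,j}$ and $E_{n+j,n+i}$ acquire the common factor $(\delta_{k,i}-\delta_{k,j}) = (L_i-L_j)(H_k)$. The paper organizes the computation by cases ($k=i$, $k=j$, $k\neq i,j$) rather than with a uniform Kronecker-delta identity, but the content, including the sign bookkeeping in the lower-right block, is the same.
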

\begin{proof}
We claim that $X_{i,j}=E_{i,j}-E_{n+j,n+i}$ is an eigenvector for the adjoint action of $h$ with eigenvalue $L_{i}-L_{j}$. We first notice that by definition we have $[H_{i},E_{i,j}]=E_{i,j}$ because $$[H_{i},E_{i,j}]=[E_{i,i}-E_{n+i,n+i},E_{i,j}]=[E_{i,i},E_{i,j}]-[E_{n+i,n+i},E_{i,j}]=E_{i,j}-0=E_{i,j}$$ We also have $$[H_{j},E_{i,j}]=[E_{j,j}-E_{n+j,n+j},E_{i,j}]=[E_{j,j},E_{i,j}]=-E_{i,j}$$
The same happens for matrix $E_{n+j,n+i}$ because $$[H_{i},E_{n+j,n+i}]=[E_{i,i}-E_{n+i,n+i},E_{n+j,n+i}]=[-E_{n+i,n+i},E_{n+j,n+i}]=-[E_{n+i,n+i},E_{n+j,n+i}]=E_{n+j,n+i}$$
We also claim that these elements vanish with adjoint action of $h_{k},k\not=i,j$. Thus $[H_{k},X_{i,j}]=(L_{i}-L_{j})(H_{k})X_{i,j}$, in other words $X_{i,j}$ is an eigenvector of $h$ with eigenvalue $L_{i}-L_{j}$.
\end{proof}

\begin{lemma}
The roots of $so_{6}$ can be identified as $\{\pm L_{i}\pm L_{j}\}$, $i\not=j$.  
\end{lemma}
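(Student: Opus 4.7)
The plan is to identify all non-zero eigenvectors of $\operatorname{ad}(h)$ on $so_6$, then match each one to a linear functional $\pm L_i \pm L_j$. The previous lemma already handed us half of the answer: the elements $X_{i,j} = E_{i,j} - E_{n+j, n+i}$ for $i \neq j$ (with $i,j \in \{1,2,3\}$) give eigenvectors with eigenvalue $L_i - L_j$, accounting for $6$ root spaces. What remains is to produce root vectors for $L_i + L_j$ and $-(L_i + L_j)$ and then to check that we have found everything by a dimension count.

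First I would recall the block form of $so_{2n}$: writing a matrix as $\begin{pmatrix} A & B \\ C & D \end{pmatrix}$ in $n \times n$ blocks, the defining condition $M^t Q + QM = 0$ (with $Q$ the anti-diagonal form used implicitly in the $X_{i,j}$ formula) forces $D = -A^t$ and $B, C$ to be skew-symmetric in a suitable sense. From this normal form one reads off three natural families of elements: the diagonal-block family $X_{i,j}$ already considered, and two off-diagonal-block families $Y_{i,j} = E_{i,n+j} - E_{j,n+i}$ (for $i<j$) and $Z_{i,j} = E_{n+i,j} - E_{n+j,i}$ (for $i<j$), built to be skew in the appropriate slots.

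Next I would compute $[H_k, Y_{i,j}]$ and $[H_k, Z_{i,j}]$ exactly as in the previous lemma. Using $[H_k, E_{a,b}] = (\delta_{k,a} - \delta_{k,n+a} - \delta_{k,b} + \delta_{k,n+b})E_{a,b}$, one finds
\[
[H_k, Y_{i,j}] = (\delta_{k,i} + \delta_{k,j})\, Y_{i,j} = (L_i + L_j)(H_k)\, Y_{i,j},
\]
and symmetrically $[H_k, Z_{i,j}] = -(L_i + L_j)(H_k)\, Z_{i,j}$. So $Y_{i,j}$ is a root vector for $L_i + L_j$ and $Z_{i,j}$ is a root vector for $-(L_i + L_j)$. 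Together with the $X_{i,j}$'s we have exhibited root vectors for every functional in $\{\pm L_i \pm L_j : i \neq j\}$.

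Finally, I would close the argument with a dimension count to rule out any further roots. Specializing to $n=3$, the Cartan has dimension $3$, the $X_{i,j}$'s give $3 \cdot 2 = 6$ root spaces, and the $Y_{i,j}$'s and $Z_{i,j}$'s give $\binom{3}{2} + \binom{3}{2} = 6$ more. That totals $3 + 12 = 15 = \dim so_6$, so the Cartan decomposition is exhausted and the displayed list is the complete root system. The only real step requiring care is the sign bookkeeping in computing $[H_k, Y_{i,j}]$ and $[H_k, Z_{i,j}]$; everything else is structural, and the generalization to $so_{2n}$ is immediate by the same count $n + 2n(n-1) = n(2n-1) = \dim so_{2n}$.
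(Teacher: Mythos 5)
Your proposal is correct and follows essentially the same route as the paper: exhibit the off-diagonal root vectors $Y_{i,j}$ and $Z_{i,j}$ alongside the $X_{i,j}$ from the previous lemma, compute their eigenvalues under $\operatorname{ad}(H_k)$, and observe that the degenerate $i=j$ cases contribute nothing. Your closing dimension count $n+2n(n-1)=\dim so_{2n}$ is a welcome addition the paper omits (it merely counts $4\binom{n}{2}$ roots without verifying exhaustion), and your skew-symmetric sign convention $E_{i,n+j}-E_{j,n+i}$ is in fact the standard one for this bilinear form, whereas the paper writes the sum.
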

\begin{proof}
We construct $Y_{i,j}=E_{i,n+j}+E_{j,n+i}$, $Z_{i,j}=E_{n+i,j}+E_{n+j,i}$ like above. We shall not do the same computations and conclude that they have eigenvalue $L_{i}+L_{j},-L_{i}-L_{j}$ respectively. The $i=j$ case would imply $X_{i,i}=H_{i}$ in $h$, while $Y_{i,j}=2E_{i,n+i},Z_{i,i}=2E_{n+i,i}$ not in $V$. Thus the roots of $so_{6}$ are just $\pm L_{i}\pm L_{j}\in h^{*}$, $i\not=j$. We thus have $4*(C^{n}_{2})=2n(n-1)$ roots. 

In the case $n=3$, the 12 roots are $\{\pm L_{1}\pm L_{2}\}, \{\pm L_{2}\pm L_{3}\}, \{\pm L_{3}\pm L_{2}\}$.
\end{proof}

\begin{lemma}
The Weyl group of $so_{6}$ has order 24 and is the semidirect proudct of $\Z/2\Z\oplus \Z/2\Z$ and $S_{3}$. It is isomorphic to $S_{4}$.
\end{lemma}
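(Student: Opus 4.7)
The plan is to compute the Weyl group directly from the root system $\{\pm L_i \pm L_j : i\neq j\}\subset h^*$ identified in the preceding lemma, show that it carries the structure of a semidirect product $(\Z/2\Z)^{2}\rtimes S_{3}$ of order 24, and then exhibit an isomorphism with $S_{4}$.

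First I would compute the reflection $W_\alpha$ for each root. Using the standard inner product on $h^*$ in which the $L_{i}$ are orthonormal together with the formula $W_\alpha(\beta)=\beta-\frac{2(\beta,\alpha)}{(\alpha,\alpha)}\alpha$ from \cite{Fulton}, one finds that $W_{L_{i}-L_{j}}$ swaps $L_{i}\leftrightarrow L_{j}$ and fixes the third coordinate, while $W_{L_{i}+L_{j}}$ sends $L_{i}\mapsto -L_{j}$ and $L_{j}\mapsto -L_{i}$, again fixing the third coordinate. In particular the composition $W_{L_{i}+L_{j}}\circ W_{L_{i}-L_{j}}$ negates both $L_{i}$ and $L_{j}$ while fixing the remaining coordinate.

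Second I would identify two distinguished subgroups inside the Weyl group $W$. The reflections $W_{L_{i}-L_{j}}$ generate a copy of $S_{3}$ permuting $\{L_{1},L_{2},L_{3}\}$. The three double-sign-changes produced above, together with the identity, form a subgroup isomorphic to $(\Z/2\Z)^{2}$ (note that an odd number of sign changes on three coordinates is not realized, since every reflection preserves the parity of the total number of sign flips). Since any element of $W$ is obtained by composing a coordinate permutation with a sign change, and since $S_{3}$ acts on the three nontrivial double-sign-changes by the obvious permutation action, we conclude $W\cong (\Z/2\Z)^{2}\rtimes S_{3}$, which has order $4\cdot 6=24$.

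Third, to identify this with $S_{4}$, I would use the standard presentation of $S_{4}$ as a semidirect product: the Klein four-subgroup $V_{4}=\{e,(12)(34),(13)(24),(14)(23)\}$ is normal in $S_{4}$ with quotient $S_{4}/V_{4}\cong S_{3}$, and the stabilizer of the element $4$ provides a splitting $S_{3}\hookrightarrow S_{4}$, giving $S_{4}\cong V_{4}\rtimes S_{3}$. Since conjugation by $S_{3}$ permutes the three nontrivial elements of $V_{4}$ as the full permutation action, the two semidirect-product structures match, yielding the desired isomorphism $W\cong S_{4}$.

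The main obstacle is simply verifying that the $S_{3}$-action on $(\Z/2\Z)^{2}$ inside $W$ really agrees with the $S_{3}$-action on $V_{4}$ inside $S_{4}$; this reduces to checking on generators, since both actions permute the three nontrivial involutions transitively and faithfully. As a conceptual sanity check, this isomorphism is forced by the exceptional Lie-algebra isomorphism $\mathfrak{so}_{6}\cong\mathfrak{sl}_{4}$: the Weyl group of $\mathfrak{sl}_{4}$ is $S_{4}$, acting by permutation on the four coordinates of its Cartan subalgebra, and this matches the group we have just constructed.
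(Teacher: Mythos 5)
Your proposal is correct and follows essentially the same route as the paper: compute the reflections $W_{L_i\pm L_j}$ explicitly, identify the normal Klein four-subgroup of even sign changes and the $S_3$ of coordinate permutations, and match the resulting semidirect product with the standard decomposition $S_4\cong V_4\rtimes S_3$ where $V_4=\{e,(12)(34),(13)(24),(14)(23)\}$. Your explicit check that the $S_3$-action on the three nontrivial double-sign-changes agrees with conjugation on $V_4$ is a point the paper passes over more quickly, so it is a welcome refinement rather than a different argument.
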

\begin{proof}
We wish to show that the group exact sequence $1\rightarrow \Z/2\Z\oplus \Z/2\Z \rightarrow W\rightarrow S_{3}\rightarrow 1$ holds. We shall prove the statment along the lines of Fulton and Harris.

The Weyl group is generated by reflection in the hyperplanes perpendicular to the roots $L_{i}\pm L_{j}$. We define $W_{\pm L_{i}\pm L_{j}}$ to be transformation orthogonal to $\pm L_{i}\pm L_{j}$.So $W_{L_{1}+L_{2}}$ maps $L_{1}$ to $-L_{2}$, $L_{2}$ to $-L_{1}$ , and keeps $L_{3}$ to the same position. $W_{L_{1}-L_{2}}$ exchanges $L_{1}$ and $L_{2}$, and $-L_{2}$ and $-L_{1}$. Thus combined together they change $L_{1},L_{2}$ to their negative and keep $L_{3}$. 

Since reflection by $W_{L_{1}-L_{2}}$ exchanges the two axes, the Weyl group contains a full subgroup isomorphic to $S_{3}$. The kernel of this action consists of transformations of determinant 1 that act as $-1$ on an even number of axis. They are generated by $W_{L_{1}+L_{2}}W_{L_{1}-L_{2}}$ and $W_{L_{2}+L_{3}}W_{L_{2}-L_{3}}$, both of order 2. Since $$W_{L_{1}+L_{2}}W_{L_{1}-L_{2}}W_{L_{2}+L_{3}}W_{L_{2}-L_{3}}=W_{L_{1}+L_{2}}W_{L_{1}-L_{2}}W_{L_{2}+L_{3}}W_{L_{2}-L_{3}}$$ we conclude that the group is abelian of order 4, thus isomorphic to $\Z/2\Z\oplus \Z/2\Z$. By definition of semidirect product this implies $W(so_{6})\cong \Z/2\Z\oplus \Z/2\Z \rtimes S_{3}$. 

To prove $\Z/2\Z\oplus \Z/2\Z \rtimes S_{3}\cong S_{4}$ we need to show the subgroup $$K=\{e,(12)(34),(13)(24),(14)(23)\}$$ is normal in $S_{4}$, $K\cap S_{3}=\{e\}$, and $S_{4}=KS_{3}$ as a group product. It is clear that $K$ is normal because we may decompose any element in $S_{4}$ as a product of non-intersecting transpositions, and the inner automorphism keeps the total transposition number be constant. It is also clear the $S_{3}\cap K=\{e\}$. The last statement is verified by straightforward computation. 
\end{proof}

\begin{remark}
We remark that the above method is totally analgous to the general $so_{n}$ case, where the Weyl group is $(\Z/2\Z)^{n-1}\rtimes S_{n}$. 
\end{remark}

\lemma We claim that $R(sl_{4})\cong R(so_{6})$.
\begin{proof}
This is because $Z[T_{sl_{4}}]\cong Z[T_{so_{6}}]$, and the Weyl group is isomorphic. Since $R(G)=R(T)^{W}$, we showed $R(sl_{4})\cong R(so_{6})$. Of course one may peek into the messy proof below to see the ring structure; the proof is essentialy nothing but change of variables (or coordinates). 
\end{proof}

\remark In fact this comes from the following more elementary fact:
\lemma $so_{6}\cong sl_{4}$. 
\begin{proof}
This is a proof borrowed from help in Mathoverflow, see \cite{BS}. I should avoid quoting verbatim for proof I do not fully understand and only noting that we have $Spin(6)\cong SU(4)$. We notice that $SU(4)$ is simply connected which acts on $\C^{4}$ preserving the hermitian form that "descend" down to $SO(6)$ because it preserves the real structure by conjugation. This implies $SU(4)\rightarrow SO(6)$ is a double covering. Thus $SU_{4}\cong Spin(6)$ and as a result $so_{6}\cong sl_{4}$. 
\end{proof}

\begin{theorem} $R(T_{so_{6}})$ is a free module over $R(so_{6})$.
\begin{proof}
This is the analgous consequence that $R(T_{sl_{4}})$ is a free module over $R(sl_{4})$. 
\end{proof}
\end{theorem}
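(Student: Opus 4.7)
The plan is to transport the result from $sl_4$ via the exceptional isomorphism $\mathfrak{so}_6 \cong \mathfrak{sl}_4$ (equivalently $\mathrm{Spin}(6) \cong SU(4)$) that was just recorded in the previous lemma. The heavy algebraic lifting has already been done in the $SU_n$ chapter, where an explicit $R(SU_n)$-basis $\{\prod x_i^{a_i} : 0 \le a_i \le n-i\}$ for $R(T_{SU_n})$ was produced by building a tower of minimal-polynomial extensions. My job is therefore to check that the isomorphism $\mathfrak{so}_6 \cong \mathfrak{sl}_4$ is compatible with the module structure, so that the $SU_4$-basis descends to an $R(SO_6)$-basis for $R(T_{SO_6})$.

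First, I would pin down a linear change of coordinates $\varphi\colon \mathfrak{h}^*_{\mathfrak{so}_6} \to \mathfrak{h}^*_{\mathfrak{sl}_4}/(\sum L_i^s = 0)$ carrying the $\mathfrak{so}_6$-roots $\pm L_i^o \pm L_j^o$ bijectively onto the $\mathfrak{sl}_4$-roots $L_i^s - L_j^s$. The standard choice is $L_1^o = \tfrac12(L_1^s + L_2^s - L_3^s - L_4^s)$, $L_2^o = \tfrac12(L_1^s - L_2^s + L_3^s - L_4^s)$, $L_3^o = \tfrac12(L_1^s - L_2^s - L_3^s + L_4^s)$, which one verifies sends the twelve $\mathfrak{so}_6$-roots onto the twelve $\mathfrak{sl}_4$-roots and conjugates the Weyl group actions. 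Exponentiating, this gives a ring isomorphism $\Phi\colon \mathbb{Z}[\Lambda_{so_6}] \to \mathbb{Z}[\Lambda_{sl_4}]$, i.e., an isomorphism $R(T_{SO_6}) \cong R(T_{SU_4})$ that intertwines the Weyl group actions.

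Next, since $R(G) \cong \mathbb{Z}[\Lambda]^W$ for compact connected simply connected $G$ (recorded in Chapter 1 and reused throughout the paper), $\Phi$ restricts to an isomorphism $R(SO_6) \cong R(SU_4)$ of invariant subrings, and the restricted inclusion $R(SO_6) \hookrightarrow R(T_{SO_6})$ corresponds under $\Phi$ exactly to $R(SU_4) \hookrightarrow R(T_{SU_4})$. Consequently, any $R(SU_4)$-module basis for $R(T_{SU_4})$ pulls back along $\Phi^{-1}$ to an $R(SO_6)$-module basis for $R(T_{SO_6})$. Applying the theorem from Section 4.2.3 with $n=4$ yields the desired basis, of cardinality $4! = 24 = |W(SO_6)|$, consistent with Steinberg's count.

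The main obstacle, such as it is, lies in the first step: checking that the exceptional isomorphism can indeed be arranged to send one chosen Cartan subalgebra to the other, and that the induced map on characters is integral (i.e.\ really lands in $\mathbb{Z}[\Lambda]$ and not merely in $\mathbb{Z}[\tfrac12 \Lambda]$). One must use the correct integer weight lattice for $\mathrm{Spin}(6)$ rather than $SO(6)$; the half-integer coefficients above correspond to spin representations, and it is precisely the simple-connectedness of $\mathrm{Spin}(6)$ that makes $\Lambda_{\mathrm{Spin}(6)}$ match $\Lambda_{SU_4}$. Once this lattice-matching is confirmed, the rest of the argument is essentially formal transport of structure.
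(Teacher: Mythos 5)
Your proposal follows the same route as the paper: both transport the freeness result from $R(T_{sl_{4}})$ over $R(sl_{4})$ through the exceptional isomorphism $\mathfrak{so}_{6}\cong\mathfrak{sl}_{4}$, using the identification of the character lattices and Weyl groups established in the preceding lemmas. Your explicit attention to the lattice-matching issue (that one must use the $\mathrm{Spin}(6)$ weight lattice, containing the half-integer spin weights, for the identification with $\Lambda_{SU_{4}}$ to be integral) is a point the paper only addresses implicitly later, via the change of variables $a=(xyz)^{1/2}$ in its subsequent explicit computation, so your version is a welcome tightening of the same argument.
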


We now write down the 'messy' proof because this provides an explicit basis;
\begin{lemma}
$R(h_{so_{6}})\cong \Z[x,y,z,x^{-1},y^{-1},z^{-1}, (xyz)^{1/2}]$.
\end{lemma}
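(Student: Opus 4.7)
The preceding lemma has given the isomorphism $so_6 \cong sl_4$ (equivalently $\mathrm{Spin}(6)\cong SU(4)$), so the plan is to transport the known character ring of the maximal torus of $SU(4)$ through the exterior-square map $SU(4)\twoheadrightarrow SO(6)$ and read off the generators in the $(x,y,z)$-coordinates of $T_{SO(6)}$. Since earlier in the chapter on $SU_n$ we identified $R(T_{SU(4)}) \cong \Z[t_1^{\pm 1},t_2^{\pm 1},t_3^{\pm 1},t_4^{\pm 1}]/(t_1 t_2 t_3 t_4 - 1)$, and since the simply-connected covering $SU(4)\to SO(6)$ is realized on $\Lambda^2 \C^4 \cong \C^6$, the six weights $\{x,y,z,x^{-1},y^{-1},z^{-1}\}$ of the defining representation of $SO(6)$ must pull back to the six products $t_i t_j$ with $1\le i<j\le 4$.

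The concrete computation I would carry out is as follows. Choose the identification
\[ x = t_1 t_2, \qquad y = t_1 t_3, \qquad z = t_1 t_4, \]
and then the relation $t_1 t_2 t_3 t_4 = 1$ forces $x^{-1}=t_3 t_4$, $y^{-1}=t_2 t_4$, $z^{-1}=t_2 t_3$, which exhausts the six torus weights of $SO(6)$. Multiplying gives
\[ xyz \;=\; t_1^{3}\, t_2 t_3 t_4 \;=\; t_1^{2}\,(t_1 t_2 t_3 t_4) \;=\; t_1^{2}, \]
so $t_1$ is a well-defined square root of $xyz$ in $R(T_{SU(4)})$; we rename $t_1 = (xyz)^{1/2}$. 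The remaining torus characters then become $t_2 = x\,(xyz)^{-1/2}$, $t_3 = y\,(xyz)^{-1/2}$, $t_4 = z\,(xyz)^{-1/2}$, and one verifies directly that $t_1 t_2 t_3 t_4 = xyz\cdot (xyz)^{-1} = 1$, so the relation is respected.

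From this, one can set up the ring homomorphism
\[ \Z[x^{\pm 1},y^{\pm 1},z^{\pm 1},w]\big/(w^{2}-xyz)\; \longrightarrow\; \Z[t_1^{\pm 1},t_2^{\pm 1},t_3^{\pm 1},t_4^{\pm 1}]\big/(t_1 t_2 t_3 t_4 - 1) \]
sending $x\mapsto t_1 t_2$, $y\mapsto t_1 t_3$, $z\mapsto t_1 t_4$, $w\mapsto t_1$, and check that the inverse, given by the formulas of the previous paragraph, is also well-defined. Composing with the isomorphism $R(T_{SU(4)})\cong R(T_{Spin(6)}) = R(h_{so_6})$ from $Spin(6)\cong SU(4)$ yields the claimed identification $R(h_{so_6}) \cong \Z[x^{\pm 1}, y^{\pm 1}, z^{\pm 1}, (xyz)^{1/2}]$.

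The main obstacle I anticipate is conceptual rather than computational: one must justify that the element $(xyz)^{1/2}$ really belongs to the character ring and is not merely a formal adjunction. This is precisely the point where the distinction between $SO(6)$ and its universal cover $\mathrm{Spin}(6)$ matters, because the half-spin representation of $so_6$ has highest weight $\tfrac{1}{2}(L_1+L_2+L_3)$, whose exponential is $(xyz)^{1/2}$. Under the $sl_4$ identification this half-spin weight is just the standard torus character $t_1$, which removes the ambiguity and makes the square-root literal. Once that is in place, the rest is a bookkeeping exercise in rewriting monomials in $t_i$ as monomials in $x,y,z,(xyz)^{1/2}$ and vice versa.
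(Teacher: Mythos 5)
Your proof is correct, but it takes a different route from the one in the paper. The paper's own argument is intrinsic to $so_{6}$: it inspects the diagonal matrices satisfying $A+D^{t}=0$ to read off the Laurent generators $x^{\pm1},y^{\pm1},z^{\pm1}$, and then adjoins $(xyz)^{1/2}$ by appealing to the spin representation -- an appeal the author explicitly flags as not fully understood, since it amounts to the unproved assertion that the weight lattice of the simply connected form $Spin(6)$ is $\Z L_{1}+\Z L_{2}+\Z L_{3}+\Z\,\tfrac{1}{2}(L_{1}+L_{2}+L_{3})$. You instead transport everything through the exceptional isomorphism $Spin(6)\cong SU(4)$ via $\Lambda^{2}\C^{4}\cong\C^{6}$, setting $x=t_{1}t_{2}$, $y=t_{1}t_{3}$, $z=t_{1}t_{4}$, so that $xyz=t_{1}^{2}$ and the square root becomes the literal torus character $t_{1}$; your explicit mutually inverse ring maps between $\Z[x^{\pm1},y^{\pm1},z^{\pm1},w]/(w^{2}-xyz)$ and $\Z[t_{i}^{\pm1}]/(t_{1}t_{2}t_{3}t_{4}-1)$ check out. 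What your approach buys is a rigorous justification of exactly the step the paper leaves vague, and it anticipates the change of variables ($a=x^{1/2}y^{1/2}z^{1/2}$, etc.) that the paper only introduces in the subsequent freeness theorem -- in your coordinates $a=t_{1}$, $b=t_{2}$, $d=t_{3}$, $c=t_{4}$. What it costs is self-containedness: your proof is only as strong as the identification $Spin(6)\cong SU(4)$, which this paper establishes only by citation, whereas the paper's intended argument (if completed) would describe the weight lattice of $Spin(6)$ directly without invoking type-$A$ accidents and would therefore generalize to $so_{2n}$ for $n>3$, where no such isomorphism is available.
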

\begin{proof} This is clear because the diagonal matrice in $so(6)$ satisfies $A+D^{t}=0$. Thus we can identify the top 3 elements as $a,b,c$, and the bottom 3 elements as their inverses. The last term exists because of the spin representation (my understanding of this is still not very clear). 
\end{proof}

\begin{lemma}

The representation ring of $so_{6}$ is isomorphic to\\ $\Z[x+y+z+x^{-1}+y^{-1}+z^{-1},x^{\frac{1}{2}}y^{\frac{1}{2}}z^{\frac{1}{2}}+x^{\frac{1}{2}}y^{-\frac{1}{2}}z^{-\frac{1}{2}}+x^{-\frac{1}{2}}y^{-\frac{1}{2}}z^{\frac{1}{2}}+x^{-\frac{1}{2}}y^{\frac{1}{2}}z^{-\frac{1}{2}}, x^{-\frac{1}{2}}y^{-\frac{1}{2}}z^{-\frac{1}{2}}+x^{\frac{1}{2}}y^{-\frac{1}{2}}z^{\frac{1}{2}}+x^{-\frac{1}{2}}y^{\frac{1}{2}}z^{\frac{1}{2}}+x^{\frac{1}{2}}y^{\frac{1}{2}}z^{-\frac{1}{2}}]$
\end{lemma}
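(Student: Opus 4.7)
The plan is to exploit the general principle $R(G)\cong \Z[X]^W$ already invoked throughout this paper, together with Bott's theorem (quoted earlier in the $R(\mathfrak{sl}_n)$ discussion) that $R(G)$ is a polynomial ring when $G$ is compact and simply-connected. Strictly speaking the half-integer powers $x^{1/2},y^{1/2},z^{1/2}$ force us to work with the simply-connected cover $Spin_6$ rather than $SO_6$ at the group level; the Lie algebra is the same, so this makes no difference for $R(\mathfrak{so}_6)$ viewed as the representation ring generated by all representations of $\mathfrak{so}_6$, including half-spin. Given the isomorphism $so_6\cong sl_4$ already established in the preceding lemma, $Spin_6\cong SU_4$ is simply-connected and Bott's theorem applies.

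First I would identify the three fundamental weights of the $D_3$ root system explicitly: $\omega_1=L_1$ (the vector weight), $\omega_2=\tfrac{1}{2}(L_1+L_2+L_3)$, and $\omega_3=\tfrac{1}{2}(L_1+L_2-L_3)$ (the two spinor weights). Then I would compute the characters of the corresponding irreducible representations $V_{\omega_i}$ by writing down their weight diagrams. Under the substitution $x=e^{L_1}$, $y=e^{L_2}$, $z=e^{L_3}$:
\begin{itemize}
\item $V_{\omega_1}$ is the standard $6$-dimensional vector representation, whose weights are exactly $\{\pm L_i\}$, giving character $x+y+z+x^{-1}+y^{-1}+z^{-1}$.
\item $V_{\omega_2}$ and $V_{\omega_3}$ are the two $4$-dimensional half-spin representations, whose weights are the eight vectors $\tfrac{1}{2}(\pm L_1\pm L_2\pm L_3)$ split according to parity of the number of minus signs; these give exactly the two four-term sums in the statement.
\end{itemize}

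Next I would verify that these three characters lie in the invariant subring $\Z[x^{\pm 1/2},y^{\pm 1/2},z^{\pm 1/2}]^W$, where $W=(\Z/2\Z)^2\rtimes S_3$ acts by permuting $\{x,y,z\}$ and by flipping signs of an even number of them. Since each character is by construction a sum over a $W$-orbit in the weight lattice, invariance is automatic. Finally I would invoke Bott's theorem: because $Spin_6$ is simply-connected, $R(Spin_6)\cong \Z[X]^W$ is a polynomial ring, and the isomorphism with $R(\mathfrak{sl}_4)\cong \Z[s_1,s_2,s_3]$ forces the number of polynomial generators to be exactly three — equal to the rank of the group. The three fundamental characters, being distinct highest-weight classes, must then be algebraically independent generators.

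The main obstacle is the algebraic-independence step, i.e.\ showing rigorously that the three $W$-invariants in the statement actually generate $\Z[x^{\pm 1/2},y^{\pm 1/2},z^{\pm 1/2}]^W$ freely as a polynomial ring, without appealing abstractly to Bott. If one wants to make this concrete in the spirit of the $SU_n$ proof above, the natural route is to translate via the isomorphism $\mathfrak{so}_6\cong \mathfrak{sl}_4$ developed in the preceding lemma: express the four formal variables $t_1,t_2,t_3,t_4$ of the $SL_4$ torus (with $\prod t_i=1$) in terms of $x^{\pm 1/2},y^{\pm 1/2},z^{\pm 1/2}$, and check that the elementary symmetric polynomials $e_1(t),e_2(t),e_3(t)$ under this substitution become precisely the three claimed generators. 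Once this change of variables is verified, the lemma reduces directly to the already-proved structure theorem for $R(\mathfrak{sl}_4)$.
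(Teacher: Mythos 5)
Your proposal is correct and takes essentially the same route as the paper: the paper's proof is a one-line appeal to the fact that the three generators are the characters of the fundamental representations (the standard representation and the two half-spin representations), citing Fulton--Harris p.~215, together with the earlier general theorem that $\Z[\Lambda]^{W}$ is a polynomial ring on the fundamental characters. You have simply filled in the details the paper delegates to the citation (the explicit fundamental weights of $D_{3}$, the weight diagrams, the $Spin_6$ versus $SO_6$ caveat, and the cross-check via $\mathfrak{so}_6\cong\mathfrak{sl}_4$, which the paper carries out separately in the subsequent theorem).
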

\begin{proof}
This follows from the fundamental weights in $\Z[T]^{W}$. The later two are spin representations, and the former one is the standard representation. The interested reader should refer to \cite[p.~215]{FL}.
\end{proof}

\begin{theorem}
$\Z[x,y,z,x^{-1},y^{-1},z^{-1}, {xyz}^{1/2}]$ is a a free module over $\Z[x+y+z+x^{-1}+y^{-1}+z^{-1},x^{\frac{1}{2}}y^{\frac{1}{2}}z^{\frac{1}{2}}+x^{\frac{1}{2}}y^{-\frac{1}{2}}z^{-\frac{1}{2}}+x^{-\frac{1}{2}}y^{-\frac{1}{2}}z^{\frac{1}{2}}+x^{-\frac{1}{2}}y^{\frac{1}{2}}z^{-\frac{1}{2}},x^{-\frac{1}{2}}y^{-\frac{1}{2}}z^{-\frac{1}{2}}+x^{\frac{1}{2}}y^{-\frac{1}{2}}z^{\frac{1}{2}}+x^{-\frac{1}{2}}y^{\frac{1}{2}}z^{\frac{1}{2}}+x^{\frac{1}{2}}y^{\frac{1}{2}}z^{-\frac{1}{2}}]$
\end{theorem}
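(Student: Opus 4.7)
The plan is to reduce this explicit statement to the $sl_4$ basis theorem proved earlier in the chapter, via a clean change of variables. I would first introduce four new variables
\[ t_1 = (xyz)^{1/2}, \quad t_2 = x(xyz)^{-1/2}, \quad t_3 = y(xyz)^{-1/2}, \quad t_4 = z(xyz)^{-1/2}. \]
A direct check shows $t_1 t_2 t_3 t_4 = 1$, so this extends to a ring homomorphism $R(T_{sl_4}) = \Z[t_1^{\pm 1}, \ldots, t_4^{\pm 1}]/(t_1 t_2 t_3 t_4 - 1) \to R(T_{so_6})$. The inverse sends $x \mapsto t_1 t_2$, $y \mapsto t_1 t_3$, $z \mapsto t_1 t_4$, $(xyz)^{1/2} \mapsto t_1$, and both compositions are the identity on generators, so this is an isomorphism of rings; note that the presence of the spin element $(xyz)^{1/2}$ on the $so_6$ side is exactly what is needed so that $t_1$ can be recovered.

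Next I would verify that under this isomorphism the three given generators of $R(so_6)$ correspond to the elementary symmetric polynomials in the $t_i$. A short substitution gives $e_2(t) = \sum_{i<j} t_i t_j = x + y + z + x^{-1} + y^{-1} + z^{-1}$ (the vector character), while $e_1(t) = \sum_i t_i$ reproduces the first listed spin character and $e_3(t) = \sum_{i<j<k} t_i t_j t_k$ reproduces the second; the relation $e_4(t) = 1$ is automatic. Hence the subring $R(so_6)$ is identified with $\Z[e_1, e_2, e_3] = R(sl_4)$, consistent with the lemma $so_6 \cong sl_4$ stated above.

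The general $sl_n$ basis theorem established earlier in the chapter gives that $R(T_{sl_4})$ is free over $R(sl_4)$ with basis $\{ t_1^{a_1} t_2^{a_2} t_3^{a_3} : 0 \le a_1 \le 3,\ 0 \le a_2 \le 2,\ 0 \le a_3 \le 1\}$, a set of $24 = |W(so_6)|$ elements. Transporting this basis through the isomorphism yields the explicit basis
\[ \bigl\{ (xyz)^{a_1/2}\bigl(x(xyz)^{-1/2}\bigr)^{a_2}\bigl(y(xyz)^{-1/2}\bigr)^{a_3} : 0 \le a_1 \le 3,\ 0 \le a_2 \le 2,\ 0 \le a_3 \le 1 \bigr\} \]
for $R(T_{so_6})$ as an $R(so_6)$-module.

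The main obstacle I expect is the second step: the bookkeeping of half-integer exponents is delicate, and one must check carefully that every listed generator of $R(so_6)$ expands to the correct elementary symmetric polynomial in the $t_i$. Once this identification is pinned down, freeness and an explicit basis follow formally from the $sl_n$ theorem. If one instead prefers a fully in-variable argument, the same tower-of-extensions induction used in the $sl_n$ proof can be run with the intermediate rings expressed directly in $x, y, z, (xyz)^{1/2}$ via the partial symmetric functions of the $t_i$, but the change-of-variables approach is considerably cleaner.
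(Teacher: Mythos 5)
Your proposal is correct and follows essentially the same route as the paper: the paper performs the identical change of variables $a=(xyz)^{1/2}$, $b=x^{1/2}y^{-1/2}z^{-1/2}$, $c=x^{-1/2}y^{-1/2}z^{1/2}$, $d=x^{-1/2}y^{1/2}z^{-1/2}$ (your $t_1,t_2,t_4,t_3$), identifies the three generators with the elementary symmetric polynomials subject to $abcd=1$, and reduces to the $\mathfrak{sl}_4$ freeness theorem. You are somewhat more explicit about checking that the map is a ring isomorphism and about transporting the basis, but the underlying argument is the same.
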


\begin{proof}
We notice the isomorphism under the change of variables $x^{\frac{1}{2}}y^{\frac{1}{2}}z^{\frac{1}{2}}=a,x^{\frac{1}{2}}y^{-\frac{1}{2}}z^{-\frac{1}{2}}=b,x^{-\frac{1}{2}}y^{-\frac{1}{2}}z^{\frac{1}{2}}=c, x^{-\frac{1}{2}}y^{\frac{1}{2}}z^{-\frac{1}{2}}=d$, we may translate the statement to the following: $B=\Z[ab,cd,da,bd,ca,bc]$ is free over $A=\Z[a+b+c+d,ab+cd+da+bd+ca+bc,abc+bcd+bda+acd]$ with $abcd=1$. 

The proof would be immediate if we can show $B=\Z[a,b,c,d]$, for this would imply we are dealing with the $sl_{4}$ case with a change of variables. But this is precisely the case because the base ring contains $(xyz)^{1/2}=a$. This, of course is just a verification that $so_{6}\cong sl_{4}$. 
\end{proof}

\remark We thus concluded that the basis elements are: (in here the $x,y,z$ are respective dual weights of $h_{i}$ in $sl_{4}$ and $so_{6}$ in case there is any confusion)\\
For the $sl_{4}$ case:
$$\{1,x,y	z,x^{2},xy,xz,y^{2},x^{3},x^{2}y,x^{2}z,xyz,xy^{2},y^{2}z,x^{3}y,x^{3}z,x^{2}y^{2},x^{2}yz,	xy^{2}z	,x^{3}y^{2},x^{2}y^{2}z,x^{3}y^{2}z\}$$
For the $so_{6}$ case:

$$\{1, \sqrt{xyz}, \frac{1}{y}, xyz, x, z, \frac{x}{yz}, x^{3/2}y^{3/2}z^{3/2}, x^{3/2}(yz)^{1/2}, (xy)^{1/2}z^{3/2}, \frac{(xz)^{1/2}}{y^{1/2}}, \frac{x^{1/2}}{y^{3/2}z^{1/2}}, x^{2}yz, xyz^{2}, x^{2}, xz\}$$ and $$\{\frac{x}{y}, x^{5/2}y^{1/2}z^{1/2}, \frac{x^{3/2}z^{1/2}}{y^{1/2}}, x^{2}z\}$$ because we cannot put them in one line.

\chapter{ Topics for further study}
\discussion We would like to discuss the following topics:\\
\indent The situation for $sp_{n}$, $spin_{n}$, $so_{2n+1}$ is largely unclear to me. Not only that, but the case for $so_{2n}$ is more complicated than expected because in general we are expecting a basis of size $2^{n-1}n!$. So to work on the $so_{8}$ case we would expect $2^{3}4!=192$ elements and impossible to verify by hand. 

\indent Even if we propose a tentative basis, I think I will need computer programming to verify (by index theory) that the basis we suggested is indeed a basis. It is not apparent to me that using the inner product defined by index theory, we would yield an unimodular matrix. Worse, we do not know how to find a tentative basis in general because the minimal polynomial trick we used for $sl_{n}$ (and successfully employed in $so_{2n}$ is largely inapplicable to the remaining $B,C$ cases and $D$ in general. 

\indent Thus the general proof is still unclear. We could have adopted a geometric approach to this problem by manipulating the weight vectors to see how they cancel out with each other. But in high dimensions (like $so_{8}$), we can no longer do so (manipulating 192 vectors would be too much work). One may try to work out Steinberg's basis for each specific case by programming, but it remains to be discovered the general pattern in individual cases. 

\indent We should acknowledge some progress made on other fronts that may help to solve this problem. The main method is divided difference operators and Schubert polynomials in combinatorics, which the author knows very little. In 1996 Sara c. Billey published a paper \cite{Biley} that related to this problem by working on Konstant Polynomials over $G/B$, in here $B$ is Borel subgroup. Earlier William Fulton published a related paper \cite{Fulton} on the degenercy loci of $G/B$ viewed as an algebraic variety. Recently I found John Jiang \cite{Jiang}  from Stanford is working on a related topic using Markov chains and Macdonlad operators. All of these are outside of the scope of this paper, but it seems likely that this small problem of computing the basis of $R(T)$ over $R(G)$ may have association with deeper problems in other fields. 

\indent It would also be curious to know if we can treat this in a more general setting. I know that Yuri. Manin suggested to investigate the cohomology ring of $G/B$ (or $G/T$) in the 1970s. But I do not know how this associates with the problem at hand. Still this line of thinking using the analogous relationship between K-Theory and cohomology may be helpful to understand the structure of $G/T$ and $R(T)/R(G)$ as we know the bi-invariant differential forms are associated with $H^{*}(G/T)$. But these are purely speculation at this point. 

\indent Therefore we conclude that Steinberg's basis is still practical for type $B,C,D, E,F,G$. The author wish to know (or else this paper would not be written ) of the situation in other types because he could not find much relevant information in the reference available to him.

\singlespace
\small


\begin{thebibliography}{99}

\bibitem{Adams}
Lectures on Lie Groups. J.F.Adams. University Of Chicago Press, 1983. 

\bibitem{Atiyah}
Michael. F. Atiyah and I.M. Singer, \emph{The index of elliptic operators}, The Annals of Mathematics, Page [484-530], http://www.jstor.org/stable/1970715

\bibitem{Atiyah2}
Michael. F. Atiyah, \emph{K-theory}. Westview Press, 1965, 1994.

\bibitem{Bendersky}
Martin Bendersky, \emph{Applications of elliptic operators and the Atiyah Singer index theorem}, http://xena.hunter.cuny.edu/~mbenders/TopologyII.html

\bibitem{Biley}
Sara Biley, \emph{"Kostant polynomials and the cohomology ring for G/B."}, Duke Math. J. 96: 205ñ224, doi:10.1215/S0012-7094-99-09606-. 

\bibitem{BS}
BS, \emph{answer from Mathoverflow}, http://mathoverflow.net/questions/93926/is-rsu-4-cong-rso-6

\bibitem{Borcherds}
Richard Borcherds, \emph{Lecture notes on Lie Groups (math 261)}, http://math.berkeley.edu/~reb/courses/261/all.pdf

\bibitem{Bott1}
Raoul Bott, \emph{The index Theorem for Homogeneous Differential Operators}, Differential and Combinatorical TOpology, pp, 167-86 (Princetion University Press, 1965)

\bibitem{Bott2}
Raoul Bott, \emph{Lectures on K(X)}, Harvard University, 1963

\bibitem{Bourbaki}
Nicolas Bourbaki, \emph{Lie groups and Lie algebras}. 

\bibitem{Bump}
Daniel Bump, \emph{Lie groups}, $http://match.stanford.edu/lie/i4_0.html,Stanford University$, Springer, 2008-2011

\bibitem{E&M}
Erik van den Ban, Marius Crainic, \emph{Analysis on Manifolds},\\
www.staff.science.uu.nl/~ban00101/anman2009/anman2009.html (2009).

\bibitem{Fulton}
William Fulton, \emph{Determinantal Formulas For Orthogonal And Symplectic Degeneracy Loci}, Journal Of Differential Geometry, Vol 43, No.2 March 1996

\bibitem{FL}
Fulton and Harris, \emph{Representation Theory, a first course}, Springer; Corrected edition (October 22, 1991)

\bibitem{Greg}
Gregory Landweber, \emph{K-Theory and Elliptic Operators}, arXiv:math/050455v1 [math.AT] (2005).

\bibitem{Greg2}
Gregory Landweber, \emph{Harmonic spinors on homogeneous spaces}, {\it arXiv:0005056}
\bibitem{Hatcher}
Allen Hatcher, \emph{Vector Bundles and K-Theory}, see http://www.math.cornell.edu/~hatcher/VBKT/VBpage.html . 

\bibitem{Greg3}
Megumi Harada, Gregory D. Landweber, and Reyer Sjamaar, \emph{DIVIDED DIFFERENCES AND THE WEYL CHARACTER FORMULA IN EQUIVARIANT K-THEORY}\\
Math. Res. Lett. 17 (2010), no. 3, 507-527

\bibitem{Hall}
Brian, Hall. \emph{Lie Groups, Lie Algebras, and Representations: An Elementary Introduction}, 2003

\bibitem{Humphreys}
J.E.Humphreys, \emph{Introduction to Lie Algebras and Representation Theory}, Springer, Janurary 23, 1973

\bibitem{Jiang}
John Jiang, \emph{analogues of power sum polynomials for symmetric Laurent polynomials}, \\
http://mathoverflow.net/questions/87770/analogues-of-power-sum-polynomials-for-symmetric-laurent-polynomials

\bibitem{Lurie}
Jacob Lurie, \emph{A proof of the Bott-Borel-Weil theorem}, http://www.math.harvard.edu/~lurie/papers/bwb.pdf

\bibitem{Kelley}
John.L.Kelley, \emph{General Topology},Springer,1975. 

\bibitem{Kronheimer}
P. B. Kronheimer, \emph{Clifford Modules}, Harvard Math 231br website. 

\bibitem{Merkurjev}
A.S. Merkurjev, \emph{Comparison of Equivariant and ordinary K-theory of Algebraic Varieties}, American Mathematical Society. Uncertain publisher/date. 

\bibitem{Milnor}
Milnor and Stasheff, \emph{Characteristic Classes}, Princeton University Press, August 1, 1974. 

\bibitem{Novikov}
B.A.Dubrovin, A.T Fomenko, S.P. Novikov, \emph{Modern Geometry - Methods and Applications: Part 3: Introduction to Homology Theory}, Springer Press, Oct 18, 1990. 

\bibitem{Paul}
Paul Loya, \emph{The Atiyah Singer Index Theorem I, II, III, IV}, \\http://www.math.binghamton.edu/loya/

\bibitem{Peter}
Peter Woit, \emph{Lie Groups and Representations}, \\http://www.math.columbia.edu/~woit/LieGroups-2012/

\bibitem{Pittie}
H.V. Pittie: \emph{Homogeneous vector bundles on homogeneous spaces}, Topology II (1972) 199-203

\bibitem{Segal}
Roger W.Carter, Ian G. MacDonald, Graeme B. Segal, M. Taylor: \emph{Lectures on Lie Groups and Lie Algebras}, London Mathematical Society Student Texts, Cambridge University Press, 1995

\bibitem{Segal2}
Grame Segal, \emph{Representation ring of a compact Lie group},Publications mathematiques de l`.H.E.S., tome 34(1968)113-128

\bibitem{Segal3}
Grame Segal, \emph{Equivariant K-Theory}, Oxford 1967


\bibitem{Serre}
J.P.Serre: \emph{Complex Semisimple Lie algebras},Springer, 2001

\bibitem{Serre2}
J.P.Serre: \emph{Lie groups and Lie algebras},Springer, 1996

\bibitem{Serre3}
J.P. Serre: \emph{Linear Representations of Finite Groups}, Springer 1977

\bibitem{Speyer}
David Speyer, Answer in mathoverflow, \\http://mathoverflow.net/questions/87939/reference-request-basis-in-terms-of-ring-of-symmetric-polynomials

\bibitem{Steinberg}
Robert Steinberg, \emph{On a theorem by Pittie}, Topology Vol. 14. pp. 173-177. Pergamon Press, 1975, Printed in Great Britain. Received 1 October 1974

\bibitem{Sternberg}
Shlomo Sternberg ,\emph{Lie Algebras}, \\ 
$http://www.math.harvard.edu/~shlomo/docs/lie_algebras.pdf$

\bibitem{Tao}
Terence Tao, \emph{245A}, \\http://terrytao.wordpress.com/2011/09/08/254a-notes-2-building-lie-structure-from-representations-and-metrics See his blog {\bf{What's new}}. 

\bibitem{Varadarajan}
V. S. Varadarajan, \emph{Lie groups, Lie algebras, and their representations}, Springer, 1984.


\end{thebibliography}
\end{document}